%% LyX 2.2.1 created this file.  For more info, see http://www.lyx.org/.
%% Do not edit unless you really know what you are doing.
\documentclass[oneside,english]{amsart}
\usepackage[T1]{fontenc}
\usepackage[latin9]{inputenc}
\usepackage{color}
\usepackage{babel}
\usepackage{amstext}
\usepackage{amsthm}
\usepackage{amssymb}
\usepackage[unicode=true,pdfusetitle,
 bookmarks=true,bookmarksnumbered=false,bookmarksopen=false,
 breaklinks=false,pdfborder={0 0 1},backref=page,colorlinks=false]
 {hyperref}

\makeatletter
%%%%%%%%%%%%%%%%%%%%%%%%%%%%%% Textclass specific LaTeX commands.
\numberwithin{equation}{section}
\numberwithin{figure}{section}
\theoremstyle{plain}
\newtheorem{thm}{\protect\theoremname}
  \theoremstyle{definition}
  \newtheorem{defn}[thm]{\protect\definitionname}
  \theoremstyle{plain}
  \newtheorem{prop}[thm]{\protect\propositionname}
  \theoremstyle{plain}
  \newtheorem{cor}[thm]{\protect\corollaryname}
 \newcommand\thmsname{\protect\theoremname}
 \newcommand\nm@thmtype{theorem}
 \theoremstyle{plain}
 
 \newenvironment{namedthm}[1][Undefined Theorem Name]{
   \ifx{#1}{Undefined Theorem Name}\renewcommand\nm@thmtype{theorem*}
   \else\renewcommand\thmsname{#1}\renewcommand\nm@thmtype{namedtheorem}
   \fi
   \begin{\nm@thmtype}}
   {\end{\nm@thmtype}}
  \theoremstyle{remark}
  \newtheorem{rem}[thm]{\protect\remarkname}
  \theoremstyle{plain}
  \newtheorem{lem}[thm]{\protect\lemmaname}
  \theoremstyle{definition}
  \newtheorem{example}[thm]{\protect\examplename}
\usepackage{soul}

%%%%%%%%%%%%%%%%%%%%%%%%%%%%%% User specified LaTeX commands.
%%The following displays labels and more in grey in a small font (scriptsize instead of small).
%%Comment out lines to disable labels.
%\usepackage[notcite, color]{showkeys} %show labels, refs, and bibitems (not cites) in grey
%\renewcommand{\showkeyslabelformat}[1]{\fbox{\normalfont\scriptsize\ttfamily#1}}

\makeatother

  \providecommand{\corollaryname}{Corollary}
  \providecommand{\definitionname}{Definition}
  \providecommand{\examplename}{Example}
  \providecommand{\lemmaname}{Lemma}
  \providecommand{\propositionname}{Proposition}
  \providecommand{\remarkname}{Remark}
  \providecommand{\theoremname}{Theorem}
\providecommand{\theoremname}{Theorem}

\DeclareMathOperator{\Var}{Var}

\begin{document}

\title[Algorithmic randomness and computable measure theory]{On the close interaction between algorithmic randomness and constructive/computable measure theory}

\author{Jason Rute}

\thanks{Started January 15, 2015. Last updated \today.}
\begin{abstract}
This is a survey of constructive and computable measure theory with
an emphasis on the close connections with algorithmic randomness.
We give a brief history of constructive measure theory from Brouwer
to the present, emphasizing how Schnorr randomness is the randomness
notion implicit in the work of Brouwer, Bishop, Demuth, and others.
We survey a number of recent results showing that classical almost everywhere
convergence theorems can be used to characterize many of the common
randomness notions including Schnorr randomness, computable randomness,
and Martin-Löf randomness. Last, we go into more detail about computable
measure theory, showing how all the major approaches are basically
equivalent (even though the definitions can vary greatly).
\end{abstract}

\maketitle

\section{Introduction\label{sec:Introduction}}

Starting with the work of Turing in 1936 on the computability of
real numbers, it has been understood that many of the basic concepts
of analysis \textemdash{} e.g.~continuous functions, metric spaces,
and open sets \textemdash{} have computable analogues. This ``computable
interpretation'' of analysis has been developed through many interrelated
mathematical traditions, including the Russian and American constructivist
traditions, computable analysis, and reverse mathematics. 

One sub-branch of analysis, measure theory, has presented one of the
largest challenges to this program, as the American constructivist
Bishop observed.
\begin{quotation}
Any constructive approach to mathematics will find a crucial test
in the ability to assimilate the intricate body of mathematical thought
called measure theory. {[}...{]} It was recognized by Lebesgue, Borel,
and other pioneers in abstract function theory that the mathematics
they were creating relied, in a way almost unique at the time, on
set-theoretic methods, leading to results whose constructive content
was problematical. \cite[p.~154]{Bishop:1967lq}
\end{quotation}
In the 1960s, Bishop \cite{Bishop:1967lq} \textemdash{} in addition
to the Russian school of constructivists Šanin \cite{Sanin:1968dq}, Kosovski\u{\i}
\cite{Kosovskii:1969,Kosovskii:1969a,Kosovskii:1969b}, and Demuth
\cite{Demuth:1965,Demuth:1967a,Demuth:1967b,Demuth:1968,Demuth:1968b,Demuth:1969a,Demuth:1969b,Demuth:1969c,Demuth:1970a,Demuth:1973}
\textemdash{} overcame these difficulties to develop constructive
theories of measurable sets, measurable functions, integrable functions,
null sets, and almost everywhere convergence (drawing on earlier work
of Brouwer \cite{Brouwer:1919a}). Their work was later incorporated
into computable analysis and reverse mathematics.

Also in the 1960s, Martin-Löf \cite{Martin-Lof:1966mz} developed
his own notion of constructive null set, providing one of the most
successful definitions of randomness. Namely, a point is (Martin-Löf)
random if it is not in any (Martin-Löf) constructive null set. Around
1970, Schnorr \cite{Schnorr:1969ly,Schnorr1971a}
felt that Martin-Löf's notion of constructive null set was too inclusive.
He developed two other randomness notions, now known as Schnorr randomness
and computable randomness, each having their own corresponding notion
of constructive null set.

This article will show there is a deep connection between computable
measure theory and algorithmic randomness. At the heart of this discussion
is the notion of an effective (or constructive) null set.

After a short introduction to algorithmic randomness in Section~\ref{sec:Intro-to-rand},
we will give a survey of constructive measure theory in Section~\ref{sec:Constructive-survey}.
The purpose of this survey is twofold: to highlight common approaches
to measure theory among constructivists such as Brouwer, Demuth, Bishop,
Martin-Löf and others, and to show that algorithmic randomness naturally
arises out of these approaches. This will provide motivation for some
of the more technical results in the rest of the paper.

In Section~\ref{sec:Characterizing-randomness}, we survey a number
of recent results characterizing Schnorr randomness, computable randomness,
and Martin-Löf randomness using theorems from classical analysis.
For example, we will see that a real $x\in[0,1]$ is Martin-Löf random
if and only if $f$ is differentiable at $x$ for every computable
function $f\colon[0,1]\rightarrow\mathbb{R}$ of bounded variation.
Theorems of this type provide a useful-but-informal measure of the
``naturalness'' of a randomness notion. We also show how the characterization
results are connected to results in constructive analysis and reverse
mathematics. For example, the results that are constructively provable
(or provable in $\mathsf{RCA}_{0}$) are those most connected to Schnorr
randomness.

Lastly, in Section~\ref{sec:Foundations}, we turn to the foundations
of computable measure theory. We systematically organize the various
definitions in the computable and constructive mathematics literature of
effectively measurable set, measurable function, integrable function,
and almost uniform convergence. Although there is a number of definitions of these notions, %CHRIS CHANGE
they are basically equivalent. Once again randomness arises naturally.

Some \cite{Dasgupta:2011,Delahaye:2011} have argued that Martin-Löf
randomness is \emph{the correct randomness notion} \textemdash{}
just as Church-Turing computability is \emph{the correct computability notion}.
Others, have argued the same for other different randomness notions.
Porter \cite{Porter:2016}, on the other hand, has argued against any
one correct randomness notion. This survey, especially Section~\ref{sec:Characterizing-randomness},
supports this latter viewpoint. A variety of randomness notions have
been naturally characterized by a.e.~convergence theorems in analysis.

Nonetheless, there is one randomness notion that stands out in this
survey, especially given its limited treatment in the literature.
We will repeatedly see that Schnorr randomness, while much weaker than Martin-Löf
randomness, has very strong connections to constructive and computable
measure theory.

We hope this paper serves as a talking point between those from the
constructive analysis, the computable analysis, and the algorithmic
randomness communities. We also hope that others, who may not be interested
in randomness for its own sake, will still find this survey to be
a good starting point to learn about past and recent developments
in constructive and computable measure theory.

We are indebted to the editors Christopher Porter and Johanna Franklin.
Without their encouragement, this survey would never have been finished.
Moreover, they very graciously helped the author with a large amount of 
editing and reference tracking.

\section{A quick introduction to effective null sets and algorithmic randomness\label{sec:Intro-to-rand}}

Before getting into constructive measure theory in more depth, let us
introduce the concept of an effective null set. This notion is at
the heart of computable and constructive measure theory (especially
from a point-set view), and it is the starting point of algorithmic
randomness.

\subsection{Computable analysis on $\mathbb{R}$\label{subsec:Comp-analysis-on-R}}

We assume the reader has some basic understanding of what it means
for a function $f\colon\mathbb{N}\rightarrow\mathbb{N}$ to be computable.
See, for example, \cite{cooper04, soare16,o1,o2}. A real number $r$ is \emph{computable}
if there is a computable function $f\colon\mathbb{N}\rightarrow\mathbb{Q}$
such that $|f(n)-r|\leq2^{-n}$ for all $n\in\mathbb{N}$. We will
denote the set of computable reals as $\mathbb{R}_{\textnormal{comp}}$.

An \emph{effectively open set} $U\subseteq[0,1]$ is a set of the
form $\bigcup_{n}I_{n}$ where $(I_{n})_{n\in\mathbb{N}}$ is a computable
listing of open intervals with rational endpoints. (Under the usual
topology of $[0,1]$, the interval $[0,1/2)$ is an open interval
since it is the intersection of the open interval $(-1/2,1/2)$ and
$[0,1]$.) An \emph{effectively closed set} is the complement of
an effectively open set. If $D\subseteq[0,1]$, a \emph{computable function}
$f\colon D\rightarrow\mathbb{R}$ is a function such that for every
effectively open set $U\subseteq[0,1]$, we can (uniformly in the
code for $U$) compute an effectively open set $V$ such that $V\cap D=f^{-1}(U)\cap D$.
(This is one of many equivalent definitions.) If $f\colon\mathbb{R}_{\textnormal{comp}}\rightarrow\mathbb{R}_{\textnormal{comp}}$
is computable, then we say that $f$ is \emph{Markov computable}.

These definitions also extend naturally to Cantor space $\{0,1\}^\mathbb{N}$,
the space of infinite binary sequences.  Let $\{0,1\}^{<\mathbb{N}}$ denote the 
space of finite binary sequences.  Instead of rationals, use sequences 
containing finitely many $1$s. Instead of rational intervals, use cylinder sets 
$[\sigma]$ which is the set of all $x \in \{0,1\}^\mathbb{N}$ of which 
$\sigma \in \{0,1\}^{<\mathbb{N}}$ is a prefix.
For more background on computable analysis, see \cite{BrattkaWeihrauch99, BravermanCook06,Grzegorczyk57,Lacombe55a,Lacombe55b,Pour-El.Richards:1989,Weihrauch00}.

\subsection{Martin-Löf randomness and Schnorr randomness\label{subsec:MLR-and-SR}}

For now, let $\mu$ be the usual Lebesgue measure on $[0,1]$ or the 
\emph{fair-coin measure} on $\{0,1\}^\mathbb{N}$ given 
by $\mu([\sigma]) = 2^{-|\sigma|}$.
While there are many notions of algorithmic randomness and effective
null set, we start with the most important two. The first is due to Martin-Löf
\cite{Martin-Lof:1966mz}.
\begin{defn}
A \emph{Martin-Löf test} is a computable sequence of effectively
open sets $U_{n}$ such that $\mu(U_{n})\leq2^{-n}$ for all $n\in\mathbb{N}$. 
A \emph{Martin-Löf null set}
$E$ is any set covered by this test, that is $E\subseteq\bigcap_{n}U_{n}$.
A point $x$ is called \emph{Martin-Löf random} if it is not in any
Martin-Löf null set.
\end{defn}
The second definition of effective null set has roots in the constructive
measure theory of Brouwer, but was first introduced in a computability
theory setting by Schnorr \cite{Schnorr:1969ly,Schnorr1971a}.
\begin{defn}
A \emph{Schnorr test} is a computable sequence of effectively open
sets $U_{n}$ such that $\mu(U_{n})\leq2^{-n}$ for all $n\in\mathbb{N}$ 
and $\mu(U_{n})$
is computable uniformly in $n$. A \emph{Schnorr null set} $E$ is
any set covered by this test, that is $E\subseteq\bigcap_{n}U_{n}$.
A point $x$ is called \emph{Schnorr random} if it is not in any
Schnorr null set.
\end{defn}
Both of these definitions are effectivizations of outer regularity,
the result that any null set can be covered by an arbitrarily small
open set. 

By definition, every Schnorr null set is a Martin-Löf null
set. Therefore, every Martin-Löf random is Schnorr random. Moreover,
every computable real number $r$ is covered by a Schnorr null test.
For example, $1/2$ is covered by the Schnorr test $U_{n}=(1/2-2^{-(n+1)},1/2+2^{-(n+1)})$.
Therefore, no computable real is Schnorr random or Martin-Löf random.
However, consider the set $\mathbb{R}_{\textnormal{comp}}\cap[0,1]$
of all computable reals in the unit interval. This is where Schnorr null
sets and Martin-Löf null sets differ.
\begin{prop}[Martin-Löf \cite{Martin-Lof:1966mz}]
 There is a universal Martin-Löf null set which contains all other
Martin-Löf null sets. Therefore, $\mathbb{R}_{\textnormal{comp}}\cap[0,1]$
is a Martin-Löf null set.
\end{prop}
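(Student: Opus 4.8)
The plan is to construct a universal Martin-Löf test by effectively enumerating all Martin-Löf tests and combining them into a single test. The key observation is that, unlike Schnorr tests (where the computability of $\mu(U_n)$ is an extra uniform condition that resists enumeration), Martin-Löf tests require only the bound $\mu(U_n)\leq 2^{-n}$, and this bound can be \emph{imposed} on an arbitrary computable sequence of effectively open sets by truncation. This lets us enumerate a list containing every Martin-Löf test (up to the covered null set), which is exactly what a universal test needs.

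First I would set up an effective enumeration $(V^e_n)_{e,n\in\mathbb{N}}$ of \emph{all} computable sequences of effectively open sets, indexed by a program $e$. For each $e$ and $n$, I would replace $V^e_n$ by a truncated version $\widetilde{U}^e_n$ that enumerates the rational intervals of $V^e_n$ but halts the enumeration as soon as the accumulated measure is about to exceed $2^{-n}$; since the measure of a finite union of rational intervals is a rational we can compute exactly, this truncation is effective, and it guarantees $\mu(\widetilde{U}^e_n)\leq 2^{-n}$. Crucially, if $(V^e_n)_n$ was already a genuine Martin-Löf test then the bound was never violated, so $\widetilde{U}^e_n = V^e_n$ and the covered null set is unchanged. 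Thus $(\widetilde{U}^e_n)_{e,n}$ ranges over a family of genuine Martin-Löf tests that includes (a test covering) every Martin-Löf null set.

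Next I would amalgamate these into one test by a shift-and-union trick. Define
\[
U_k \;=\; \bigcup_{e\in\mathbb{N}} \widetilde{U}^{\,e}_{\,k+e+1}.
\]
This is a computable (uniformly in $k$) sequence of effectively open sets, being an effective union of effectively open sets. Its measure is controlled by countable subadditivity:
\[
\mu(U_k) \;\leq\; \sum_{e\in\mathbb{N}} \mu\bigl(\widetilde{U}^{\,e}_{\,k+e+1}\bigr) \;\leq\; \sum_{e\in\mathbb{N}} 2^{-(k+e+1)} \;=\; 2^{-k},
\]
so $(U_k)_k$ is a legitimate Martin-Löf test. I would then check universality: if $E$ is any Martin-Löf null set, it is covered by some genuine test $(\widetilde{U}^e_n)_n$, so $E\subseteq\bigcap_n \widetilde{U}^e_n \subseteq \bigcap_k \widetilde{U}^{\,e}_{\,k+e+1}\subseteq\bigcap_k U_k$. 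Hence $\bigcap_k U_k$ is a universal Martin-Löf null set. Finally, the second assertion follows immediately: each computable real $r\in[0,1]$ is a Martin-Löf null set on its own (as noted in the text, via the test $(r-2^{-(n+1)},r+2^{-(n+1)})$), so the countable set $\mathbb{R}_{\textnormal{comp}}\cap[0,1]$ is a countable union of Martin-Löf null sets and therefore contained in the universal null set, making it Martin-Löf null.

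**The main obstacle** is the enumeration step and the honest verification that truncation does no harm. One must be careful that ``enumerate all computable sequences of effectively open sets'' is legitimate: the sets $V^e_n$ need not individually satisfy any measure bound, and for a \emph{diverging} index $e$ the raw sequence is not a test at all — this is precisely why the truncation is essential and why the argument does not transfer to Schnorr tests, where one cannot effectively certify that $\mu(U_n)$ is computable. I would emphasize that the truncation preserves genuine tests exactly, so that no Martin-Löf null set is lost, while taming every ill-behaved index into a harmless genuine test; this asymmetry between the plain measure-bound condition and the stronger computable-measure condition of Schnorr tests is the conceptual heart of the proof.
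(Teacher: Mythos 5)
Your construction is correct and is precisely the standard universal-test argument that the paper cites to Martin-L\"of without reproducing: enumerate all candidate tests, truncate to enforce the measure bound $2^{-n}$ (which is effective since finite unions of rational intervals have exactly computable measure, and which leaves genuine tests untouched), and amalgamate via $U_k=\bigcup_e \widetilde{U}^e_{k+e+1}$ with the geometric-series bound. Your closing observation that the truncation trick is exactly what fails for Schnorr tests is also the right conceptual point, and the deduction that $\mathbb{R}_{\textnormal{comp}}\cap[0,1]$ is Martin-L\"of null because each computable real lies in the universal null set is sound.
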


\begin{prop}[Schnorr \cite{Schnorr:1970a,Schnorr:1971a}]
\label{prop:Schnorr-null-contains-comp-pt-1} Given (a code for)
a Schnorr null set $E$, one can compute (uniformly in the code) a
computable point $x\notin E$. Therefore, $\mathbb{R}_{\textnormal{comp}}\cap[0,1]$
is not a Schnorr null set.
\end{prop}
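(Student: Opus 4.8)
The plan is to ignore all but the first level of the test. Since $E\subseteq\bigcap_n U_n\subseteq U_1$ and $\mu(U_1)\leq 2^{-1}<1$ by the definition of a Schnorr test, it suffices to compute (uniformly in the given code) a computable point $x\in[0,1]$ with $x\notin U_1$; then automatically $x\notin E$. Writing $U:=U_1$, I would construct $x$ as the common limit of a computable nested sequence of dyadic rational intervals $[a_k,b_k]$ of length $2^{-k}$, chosen so that $U$ never covers too large a fraction of the current interval.

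The key subroutine, and the place where the Schnorr hypothesis is used essentially, is the computation of $\mu(U\cap J)$ for a rational interval $J$. I would first observe that this quantity is computable uniformly in $J$ and the code for the test. Indeed, $U\cap J$ and $U\setminus J$ are both effectively open uniformly, so their measures are lower semicomputable; since they sum to $\mu(U)$, which is computable by hypothesis, each summand is in fact computable. Here I use the standard fact that if $a$ and $b$ are lower semicomputable and $a+b$ is computable, then $a=(a+b)-b$ is also upper semicomputable, hence computable. This is exactly the step that fails for a Martin-L\"of test, consistent with the existence of a universal Martin-L\"of null set covering all computable reals.

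With this subroutine I maintain the invariant $\mu(U\cap[a_k,b_k])\leq\theta_k(b_k-a_k)$ with $\theta_k:=1-2^{-k-1}<1$, which holds at stage $0$ since $\mu(U)\leq 1/2$. At stage $k$ I bisect $[a_k,b_k]$ into halves $L$ and $R$; from $\mu(U\cap L)+\mu(U\cap R)=\mu(U\cap[a_k,b_k])\leq\theta_k(b_k-a_k)$ at least one half $H$ satisfies $\mu(U\cap H)\leq\theta_k 2^{-(k+1)}$. \textbf{The main obstacle} is that I cannot decide which half works, as equality of computable reals is undecidable. I resolve this using the strictly positive slack $\theta_{k+1}-\theta_k=2^{-k-2}>0$: since $\min(\mu(U\cap L),\mu(U\cap R))\leq\theta_k 2^{-(k+1)}<\theta_{k+1}2^{-(k+1)}$, a dovetailed search over rational approximations is guaranteed to certify one half as lying strictly below the relaxed target $\theta_{k+1}(b_{k+1}-a_{k+1})$, and I take the first such half as $[a_{k+1},b_{k+1}]$. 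This preserves the invariant and makes the construction uniformly computable, so $x:=\lim_k a_k$ is a computable real with $|a_k-x|\leq 2^{-k}$.

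Finally I would verify $x\notin U$. If instead $x\in U$, then $x$ lies in some open generating interval $I_m$, whence $[a_k,b_k]\subseteq I_m\subseteq U$ for all large $k$; this forces $\mu(U\cap[a_k,b_k])=b_k-a_k$, a covered fraction of $1>\theta_k$, contradicting the invariant. Hence $x\notin U\supseteq E$, and all steps are uniform in the code, proving the first claim. The second claim follows immediately: if $\mathbb{R}_{\textnormal{comp}}\cap[0,1]$ were a Schnorr null set, applying the construction to one of its Schnorr tests would produce a computable $x\in[0,1]$ lying outside the cover and hence outside the set; but $x$ is computable, so $x\in\mathbb{R}_{\textnormal{comp}}\cap[0,1]$, a contradiction.
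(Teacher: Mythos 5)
Your proof is correct. The paper states this proposition without giving a proof (it only cites Schnorr), and your argument is essentially the standard one behind that citation: restrict attention to $U_1$, use computability of $\mu(U_1)$ to make the local measures $\mu(U_1\cap J)$ uniformly computable (lower semicomputable plus lower semicomputable complement within a computable total), and run a measure-density bisection with strictly increasing slack $\theta_k$ so that the choice of half is effectively certifiable; all steps, including termination of the dovetailed search and the final contradiction from $[a_k,b_k]\subseteq U_1$ forcing covered fraction $1>\theta_k$, check out, and you correctly identify the computability of $\mu(U_1)$ as the exact point where the argument fails for Martin-L\"of tests.
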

This distinction has led many to assume Martin-Löf randomness is
more natural\footnote{``Despite Schnorr\textquoteright s critique, {[}Martin-Löf randomness{]}
has remained the paradigmatic notion of algorithmic randomness, and
has received considerably more attention than Schnorr randomness.
One reason may simply be that Martin-Löf\textquoteright s definition
came first, and is perfectly adequate for many results. Another important
reason, however, is that the mathematical theory of Schnorr randomness
is not as well behaved as that of {[}Martin-Löf randomness{]}. For
example, the existence of universal Martin-Löf tests (and corresponding
universal objects such as universal c.e. martingales and prefix-free
complexity) is a powerful tool in the study of {[}Martin-Löf randomness{]}
that is not available in the case of Schnorr randomness.'' \cite[\S7.1.2]{Downey.Hirschfeldt:2010}}, but note that Proposition~\ref{prop:Schnorr-null-contains-comp-pt-1}
is an effectivization of what is arguably the most fundamental principle
in point-set measure theory.
\begin{prop}
Any property which holds almost everywhere, holds somewhere.
\end{prop}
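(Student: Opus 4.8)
The plan is to unwind the two pieces of terminology and reduce the statement to the single fact that the ambient space is not itself a null set. First I would fix the measure space: take $[0,1]$ with Lebesgue measure $\mu$, the case of $\{0,1\}^\mathbb{N}$ with the fair-coin measure being identical. For a property $P$, the phrase ``$P$ holds almost everywhere'' means that the failure set $E = \{x : P(x) \text{ fails}\}$ is null, $\mu(E) = 0$, while ``$P$ holds somewhere'' means that the success set $[0,1] \setminus E$ is nonempty. So the content to be proved is simply that $[0,1] \setminus E \neq \emptyset$.

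Next I would compute the measure of the success set. Because $\mu$ is a finite measure with $\mu([0,1]) = 1$ and $E$ is null, finite additivity gives
\[
\mu([0,1] \setminus E) = \mu([0,1]) - \mu(E) = 1 - 0 = 1.
\]
In particular the success set has positive measure. Finally, since $\mu(\emptyset) = 0$, any set of positive measure must be nonempty; hence $[0,1] \setminus E \neq \emptyset$, and $P$ holds somewhere.

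There is no real obstacle here: the whole argument rests on the lone structural fact that the full space, having measure $1$, is not a null set. The interest of the statement lies not in its difficulty but in its foundational status, and in the contrast it sets up with the effective setting. Indeed, the surrounding discussion is pointing out that Proposition~\ref{prop:Schnorr-null-contains-comp-pt-1} is exactly the constructive sharpening of this triviality: where the classical principle merely asserts that $[0,1] \setminus E$ is nonempty, the Schnorr-randomness version delivers, from a code for the null set $E$, an explicitly \emph{computable} witness lying outside $E$.
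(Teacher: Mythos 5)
Your proof is correct: the paper states this proposition without proof, treating it as the obvious classical triviality that a set of full measure is nonempty, which is exactly the one-line argument you give. Your framing of its role as the classical counterpart to the effective version for Schnorr null sets also matches the paper's intent.
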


To see the connection, say that a property $P$ holds 
\emph{effectively almost everywhere} (in the sense of Schnorr) 
if the set of points not satisfying $P$ form a Schnorr null set.
Proposition~\ref{prop:Schnorr-null-contains-comp-pt-1}
says for every effectively such property $P$ we can effectively 
compute some $x$ for which the property $P$ holds.

\subsection{Other algorithmic randomness notions\label{subsec:Other-randomness-notions}}

Besides Schnorr and Martin-Löf randomness, there is a whole zoo of
randomness notions. We will need some of them at certain points, and
we list them here for reference. For more information the reader is
directed to the survey \cite{Downey.Hirschfeldt.Nies.ea:2006} or
the books by Downey-Hirschfeldt \cite{Downey.Hirschfeldt:2010} and
Nies \cite{Nies:2009}. (The reader may wish to skip this subsection
and refer back to it as needed.) 

The third most important randomness concept we will need is 
computable randomness. Also defined by Schnorr \cite{Schnorr1971a}, it arises
naturally in certain convergence theorems in analysis.  We use an equivalent 
definition due to Merkle, Mihailovi\'c, and Slaman \cite{Merkle.Mihailovic.Slaman:2006}.
A \emph{computable probability measure} on $[0,1]$
is a Borel probability measure $\nu$ on $[0,1]$ such that $p\mapsto\int_{0}^{1}p(x)\,d\mu(x)$
is a computable map from polynomials $p$ with rational coefficients
to their integrals.  (For Cantor space, $\{0,1\}^\mathbb{N}$, 
a \emph{computable probability measure} is a Borel probability measure on $\{0,1\}^\mathbb{N}$
for which $\sigma \mapsto \nu([\sigma])$ is computable for $\sigma \in \{0,1\}^{<\mathbb{N}}$.  
See Subsection~\ref{subsec:Computable-measures} for a uniform definition.)
\begin{defn}
A \emph{bounded Martin-Löf test} is a computable sequence of effectively
open sets $U_{n}$ such that there is a computable probability measure $\nu$
for which $\mu(U_{n} \cap A)\leq2^{-n}\nu(U_n \cap A)$ for 
any measurable set $A$. 
(It suffices that $A$ ranges over rational intervals $[a,b]$ for $[0,1]$ and  
cylinder sets $[\sigma]$ for $\{0,1\}^\mathbb{N}$.)
A \emph{computably null set}
$E$ is any set covered by this test, that is $E\subseteq\bigcap_{n}U_{n}$.
A point $x$ is called \emph{computably random} if it is not in any
computably null set.
\end{defn}

The remainder of the randomness notions are defined via complexity
of sets. The $\Pi_{1}^{0}$ and $\Sigma_{1}^{0}$ sets are respectively
the effectively closed and effectively open sets. A $\Sigma_{2}^{0}$
set, also known as an \emph{effective $F_{\sigma}$ set}, is a computable
union of $\Pi_{1}^{0}$ sets. Similarly, a $\Pi_{2}^{0}$ set, also
known as an \emph{effective $G_{\delta}$ set}, is a computable intersection
of $\Sigma_{1}^{0}$ sets. By recursion, one can define $\Sigma_{n}^{0}$
and $\Pi_{n}^{0}$ for all $n$.
\begin{defn}
A \emph{weak $n$-null set} is any subset of a null $\Sigma_{n+1}^{0}$
set. A point $x$ is called \emph{weak $n$-random} if it is not in
any weak $n$-null set (or equivalently is not in any null
$\Pi_{n}^{0}$ set). Weak 1-randomness is known as \emph{Kurtz randomness}.
\end{defn}
Many do not consider Kurtz randomness to be a true randomness notion.
One reason is that there is a Kurtz random real $x\in[0,1]$ whose
binary digits $(x_{n})$ do not satisfy the strong law of large numbers,
$\lim_{n}\frac{1}{n}\sum_{k=0}^{n-1}x_k=\frac{1}{2}$ \cite[3.5.3, 3.5.4]{Nies:2009}. 
\begin{defn}
\label{def:n-random}An \emph{$n$-Martin-Löf test} is a computable
sequence of $\Sigma_{n}^{0}$ sets $A_{n}$ such that $\mu(A_{n})\leq2^{-n}$.
An \emph{$n$-Martin-Löf null set} $E$ is any set covered by this
test, that is $E\subseteq\bigcap_{n}A_{n}$. A point $x$ is called
\emph{$n$-random} if it is not in any $n$-Martin-Löf null set.
\end{defn}
Notice  that $1$-randomness is Martin-Löf randomness. It also turns out
that $2$-randomness is equivalent to Martin-Löf randomness relative
to the halting problem $\emptyset'$, and $n$-randomness is equivalent
to Martin-Löf randomness relative to $\emptyset^{(n-1)}$.

In summary, the randomness notions are as follows listed in order of
strength (the weakest notions, which give rise to the largest set
of randoms, are listed first): Kurtz random, Schnorr random, computable
random, Martin-Löf random, weak $n$-random ($n\geq2$), $n$-random, 
weak $(n+1)$-random, ...

\section{Randomness and Constructive mathematics\label{sec:Constructive-survey}}

Constructive mathematics arose out of the desire to ensure that proofs
have computational meaning. While the early constructivist work of
Brouwer and others predates Turing's work, it is largely recognized
that constructivism has a computational interpretation (the Brouwer-Heyting-Kolmogorov interpretation). A constructive
proof of ``there exists a function $f$ ...,'' provides a construction of a computable function $f$.\footnote{This computable interpretation can be formalized via realizability or Hyland's effective topos.}

A consequence of this computable interpretation is that constructive
mathematics is consistent with \emph{Church's thesis}: all functions
are computable functions, and in particular, all reals are computable
reals.\footnote{Church's thesis in constructive mathematics
is stronger than the similarly named Church-Turing thesis (also called
Church's thesis), which only says that all intuitively computable
functions are computable in the sense of Church and Turing.  See the discussion in Beeson \cite[III.8]{Beeson:1985}.  (Although our version is closer to what Beeson calls the False Church's thesis.)} %
Nonetheless, it is still constructively provable, using Cantor's
diagonalization argument, that the set of real numbers is not countable.\footnote{To say that $[0,1]$ is not countable is to say there does not exist
an enumeration $\{r_{n}\}_{n\in\mathbb{N}}$ of $[0,1]$. Under the
computable interpretation this is saying that there is no computable
enumeration of $\mathbb{R}_{\textnormal{comp}}\cap[0,1]$.} On the other hand, it is more subtle to constructively prove that
the unit interval is not a null set. This comes down to the definition
of a null set. Classically, a set $A \subseteq [0,1]$ is null if for any $\varepsilon>0$,
the set $A$ can be covered by a sequence of intervals $(I_{n})_{n\in\mathbb{N}}$
such that the sum of the lengths of the intervals $\sum_{n\in\mathbb{N}}|I_{n}|$
is less than $\varepsilon$. Under the computable interpretation,
this covering corresponds to a Martin-Löf test.\footnote{That is, to constructively prove that a specific set $A$ is null,
we would for each (code of) $\varepsilon>0$, explicitly construct
a cover $(I_{n}^{\varepsilon})_{n\in\mathbb{N}}$ such that $\sum_{n\in\mathbb{N}}|I_{n}^{\varepsilon}|\leq\varepsilon$.
Letting $\varepsilon=2^{-k}$, we have that $U_{k}=\bigcup_{n}I_{n}^{\varepsilon}$
is an effectively open set uniformly in $k$ and that $\mu(U_{k})\leq\varepsilon=2^{-k}$
for all $k$.} However, Kreisel and Lacombe \cite{Kreisel.Lacombe:1957} and Zaslavski\u{\i}
and Ce\u{\i}tin \cite{Zaslavskii.Ceitin:1962} explicitly constructed
coverings of the computable reals which have arbitrarily small size.
Zaslavski\u{\i} and Ce\u{\i}tin call these \emph{singular coverings}.
Therefore, one quickly runs into the following \emph{paradox of singular coverings}\footnote{See Beeson \cite{Beeson:2005} for a more in-depth discussion on this
paradox, including a work-around not mentioned here.}:
\begin{thm}[Paradox of singular coverings, first version] The following set of statements is constructively inconsistent for any definition of ``null set.''
\begin{enumerate}
\item The set of computable reals $\mathbb{R}_{\textnormal{comp}} \cap [0,1]$ is a
null set.
\item (Church's thesis) All reals are computable.  (Hence $[0,1]\subseteq\mathbb{R}_{\textnormal{comp}}$.)
\item If $A \subseteq B$ and $B$ is null, then so is $A$.
\item The unit interval $[0,1]$ is not null.
\end{enumerate}
\end{thm}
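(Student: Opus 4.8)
The plan is to derive a contradiction directly from the four hypotheses by a single short chain of implications, treating ``null set'' as an uninterpreted predicate on subsets of $[0,1]$ so that the argument is valid for \emph{any} definition satisfying (1), (3), and (4). First I would use Church's thesis (2) to collapse the intersection appearing in (1): since every real lies in $\mathbb{R}_{\textnormal{comp}}$, we have $[0,1]\subseteq\mathbb{R}_{\textnormal{comp}}$, and therefore $[0,1]\subseteq\mathbb{R}_{\textnormal{comp}}\cap[0,1]$ (in fact the two sets coincide, but only the inclusion is needed below).

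Next I would invoke hypothesis (1), which asserts that $\mathbb{R}_{\textnormal{comp}}\cap[0,1]$ is null, together with the downward closure of null sets (3). Applying (3) with $A=[0,1]$ and $B=\mathbb{R}_{\textnormal{comp}}\cap[0,1]$, the inclusion just established yields that $[0,1]$ is itself null. This contradicts (4), which says exactly that $[0,1]$ is not null, and the inconsistency is complete. Note that this route deliberately goes through monotonicity (3) rather than substituting equal sets, which keeps the argument robust against definitions of ``null'' whose behavior is sensitive to how a set is presented, and makes transparent that all four listed principles are genuinely used.

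There is essentially no obstacle here: the deduction is a plain sequence of modus ponens steps. The step deserving the most care is simply confirming that it uses no nonconstructive machinery --- it appeals to neither the law of excluded middle nor any choice principle, so it goes through verbatim in intuitionistic logic; the one inclusion $[0,1]\subseteq\mathbb{R}_{\textnormal{comp}}\cap[0,1]$ is constructively immediate from (2), since (2) is precisely the assertion that each $x\in[0,1]$ is a computable real. The real content of the statement is thus not the proof but the observation that four individually plausible principles cannot coexist constructively: the singular coverings of Kreisel--Lacombe and Zaslavski\u{\i}--Ce\u{\i}tin establish (1), so a constructivist who accepts Church's thesis (2) is forced to abandon one of the remaining principles (3) or (4).
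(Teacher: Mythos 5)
Your proposal is correct and matches the paper's argument exactly: the paper's entire proof is the remark that ``(1)--(3) imply the negation of (4),'' which is precisely the chain you spell out ($[0,1]\subseteq\mathbb{R}_{\textnormal{comp}}\cap[0,1]$ by (2), that set is null by (1), hence $[0,1]$ is null by (3), contradicting (4)). Your additional care in checking that the deduction is intuitionistically valid is consistent with the paper's claim of \emph{constructive} inconsistency.
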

For, (1)--(3) imply the negation of (4).  Statements (3) and (4) are basic facts of measure theory that one needs to develop a consistent notion of measurable set and measure.  That means in order to develop measure theory, we need to reject (1) or (2).  Some, for example Martin-Löf, have used this argument to reject Church's thesis.  The negation of Church's thesis, not all reals are computable, does not actually imply (constructively) that there is a noncomputable real.  For example, Brouwer's intuitionism---in particular his fan principle---is incompatible with Church's thesis, but still compatible with \emph{weak Church's thesis}: there does not exist a nonconstructive real.

Nonetheless, there are still issues with adopting the above ``covering'' definition of a null set.
\begin{thm}[Paradox of singular coverings, second version] The following set of statements is constructively inconsistent for any definition of ``null set.''
\begin{enumerate}
\item The unit interval $[0,1]$ is a measurable set with measure one.
\item The set of computable reals $\mathbb{R}_{\textnormal{comp}} \cap [0,1]$ is a null set.
\item If $A$ has positive measure and $B$ is null then $A\smallsetminus B$ has positive measure.
\item Every measure one set contains a point.
\item (Weak Church's thesis) There does not exist a noncomputable real.
\end{enumerate}
\end{thm}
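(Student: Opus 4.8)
The plan is to show that statements (1)--(4) jointly entail the existence of a noncomputable real, in direct contradiction with weak Church's thesis (5); this runs parallel to the first version, where (1)--(3) contradicted (4). So I would assume all five statements and derive a contradiction.

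First I would introduce $C := \mathbb{R}_{\textnormal{comp}} \cap [0,1]$ and $N := [0,1] \smallsetminus C$, so that $N$ is precisely the set of noncomputable reals in the unit interval, and a point of $N$ is the same thing as a noncomputable real. By (1) the interval $[0,1]$ is measurable with measure one, hence of positive measure, and by (2) the set $C$ is null. Applying (3) with $A = [0,1]$ and $B = C$ shows that $N$ has positive measure. I would then upgrade this to $\mu(N) = 1$, using that $N$ and $C$ partition the measure-one set $[0,1]$ while $\mu(C) = 0$. Once $N$ is known to have measure one, statement (4) furnishes a point $x \in N$; by construction $x \in [0,1]$ and $x \notin \mathbb{R}_{\textnormal{comp}}$, so $x$ is a noncomputable real, and its mere existence contradicts (5). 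Observe that the whole argument appeals only to the abstract closure properties (1)--(4) and never to any particular construction of null sets, which is exactly what lets the conclusion hold ``for any definition of null set.''

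The step I expect to be the main obstacle is the passage from the ``positive measure'' delivered by (3) to the ``measure one'' required by (4). Constructively one may not assume outright that deleting a null set from a measure-one set again leaves measure one: this needs both the measurability of $N$ and the additivity of $\mu$, which is why the theorem must also supply the ambient measurable, measure-one structure in (1). A second and more logical subtlety is that (5) is a \emph{negative} statement---it denies the existence of a noncomputable real rather than asserting that every real is computable---so double-negation elimination is unavailable and I cannot simply declare $N$ to be empty. To refute (5) I must genuinely exhibit a witness, and it is precisely the point-existence principle (4) that produces one. This is the sense in which even the weak form of Church's thesis remains in tension with the most basic point-set principles of measure theory.
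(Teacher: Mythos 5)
Your proposal is correct and follows essentially the same route as the paper, whose entire justification is the single sentence that (1)--(4) imply the existence of a noncomputable real, contradicting (5). The only wrinkle you add --- upgrading $N = [0,1]\smallsetminus\mathbb{R}_{\textnormal{comp}}$ from positive measure to measure one via additivity so that (4) applies --- is a reasonable reading of the terse statement, though the paper elsewhere invokes the Bishop-style principle that every set of \emph{positive} measure contains a point, under which (3) would feed (4) directly and no upgrade would be needed.
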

For, (1)--(4) imply the existence of a noncomputable real, contradicting (5).  Again, (1), (3), and (4) are basic properties of measure theory that would be nice to have in any constructive development of point-set measure theory.  Again, one is left with the choice of denying weak Church's thesis or using a different definition of null set in which one can't constructively prove that the real numbers are null.  Most constructivists, starting with Brouwer, opted to go with the latter, defining null sets via \emph{regular coverings}, that is
coverings where $\sum_{n}|I_{n}|$ constructively exists.\footnote{In constructive mathematics, one cannot in general prove that a bounded monotone sequence converges. There are examples of bounded monotone computable sequences whose limit is not computable.}

Regular coverings, under the computable interpretation, correspond
to Schnorr tests. Indeed, Schnorr \cite{Schnorr:1969ly,Schnorr:1971a}
referred to his null sets as ``total recursive null sets in the sense
of Brouwer''.

What follows is a short survey on constructive measure theory and
related subjects, emphasizing the deep connections with effective
null sets and, in some cases, algorithmic randomness.

\subsection{Brouwerian intuitionism\label{subsec:Brouwer}}

In 1919, Brouwer \cite{Brouwer:1919a} developed a constructive measure
theory on the unit square. (See the presentation in Heyting's book
\cite[Ch.~VI]{Heyting:1956}.) In Brouwer's measure theory, a set
is null if it is enclosed in a measurable open set of arbitrarily
small measure. Here a \emph{measurable open set} is an open set in
which the measure constructively exists, and \emph{arbitrarily small}
means that given a natural number $n$, one can construct a measurable
open set enclosing $A$ with measure less than $2^{-n}$. In the computable
interpretation, a measurable open set corresponds to an effectively
open set of computable measure, and therefore the Brouwerian null
sets correspond to Schnorr null sets.

Further, in Brouwer's measure theory, a set $Q$ is \emph{measurable}
if for each $n$, there is a measurable open set $U_{n}$ of measure
less than $2^{-n}$ and a finite union of rational rectangles $V_{n}$
such that $Q=V_{n}$ outside of $U_{n}$ (that is $Q\triangle V_{n}\subseteq U_n$ 
where $\triangle$ is symmetric difference) \cite[p.~29]{Brouwer:1919a}\cite[\S\S6.3.1,Thm.~1]{Heyting:1956}.
Then $\mu(Q)$ is defined as $\lim_n \mu(V_n)$, where the measure
$\mu(V_n)$ is the geometric area of $V_n$.
Brouwer gave definitions of measurable functions and integrable functions
as well. In general, Brouwer's approach is the one followed by many later
constructivists, insofar as their approaches are equivalent.\footnote{One slight difference with later constructivists is that in Brouwer's
measure theory, a measurable function need not be defined on a set
of full measure. In this case the function is assumed to be zero on
almost all of those undefined points. However, it is shown that such
partial functions can be extended to a full domain \cite[\S\S6.2.2]{Heyting:1956}.
In that case, Brouwer's definition is compatible with the later constructivists.}

Brouwer and his students developed a large amount
of measure theory constructively, including fundamental results about
measurable functions and sets, the monotone convergence theorem, the
dominated convergence theorem, and Egoroff's theorem \cite[Ch.~VI]{Heyting:1956}.
However, it should be noted that Brouwer's intuitionism is incompatible
with classical logic. For example, it is a Brouwerian theorem that
every function on the unit interval is uniformly continuous. As a
corollary, every bounded function defined almost everywhere is measurable 
\cite[\S\S6.2.2, Thm.~1]{Heyting:1956}.
Also, Brouwer adopted the fan principle, which later constructivists
deemed nonconstructive. Using this theorem, one can prove the dominated
convergence theorem and Egoroff's theorem \cite[\S\S6.5.4]{Heyting:1956}. The latter says that (on
a probability space) a.e.~convergence implies almost uniform convergence.

\subsection{The Russian school of constructive mathematics\label{subsec:Russians}}

The Russian school of constructive mathematics \textemdash{} led
by Markov and his students Šanin, Zaslavski\u{\i}, and Ce\u{\i}tin
\textemdash{} combined the ideas of Turing and Brouwer. In particular
Church's thesis \textemdash{} that every function is (Markov) computable
\textemdash{} was explicitly assumed. Therefore, Russian recursive
constructivism is very similar to modern computable analysis (except
that the Russian constructivists avoided most nonconstructive principles
such as the law of the excluded middle\footnote{They did however adopt Markov's principle,
which states that for each binary sequence $(a_n)$, if every no term $a_n$ equals $0$, 
then there exists a term equal to $1$.  This is a weak form of the law of excluded middle.}, and avoided reference to non-computable object.) See the surveys \cite{kushner99,Demuth.Kucera:1979} and the books \cite{kushner84,bridgesrichman87} for more on Russian constructive mathematics.

In 1962, Šanin wrote a
book on constructive analysis, emphasizing constructive metric spaces, which appeared in English translation in 1968 \cite{Sanin:1968dq}.
Formally, a constructive metric space is identified with a metric
$\rho$ on the natural numbers, and the constructive points in this
constructive metric space are identified with constructive sequences
$(n_{k})$ of natural numbers such that $\rho(n_{k},n_{\ell})\leq2^{-k}$
for all $k\leq\ell$. The idea is to encode a metric on a countable
set, e.g.\ the Euclidean distance on $\mathbb{Q}$, and the constructive
metric space is the completion of this metric, e.g.\ $\mathbb{R}$.
(However, by Šanin's use of Church's thesis, this constructive completion
only consists of computable points.)

Šanin used computable metric spaces to give constructive definitions
of measurable sets, measurable functions, and integrable functions.
For example, consider the $L^{1}$-metric $\rho(f,g)=\int_{0}^{1}\left|f(x)-g(x)\right|dx$
on rational step functions. This describes a constructive metric space,
and the corresponding constructive points are the constructive integrable
functions \textemdash{} the \emph{integrable FR-constructs} in Šanin's
terminology. Similarly, Šanin defined measurable sets and measurable
functions in a similar manner (see Subsection~\ref{subsec:Point-free-approach}).
Kosovski\u{\i} \cite{Kosovskii:1969,Kosovskii:1969a,Kosovskii:1969b,Kosovskii:1970,Kosovskii:1973,Kosovskii:1973a}
further extended Šanin's work to probability theory, proving constructive
versions of the strong law of large numbers, developing a theory of
constructive stochastic processes, and extending Šanin's ideas to
arbitrary spaces given by normed Boolean algebras of sets.

Šanin's and Kosovski\u{\i}'s approach is different from Brouwer's
in that it is \emph{point-free}. Each integrable FR-construct is
not a true function, but instead a point in a metric space of function-like
objects. (Recall that, classically, the metric space $L^{1}([0,1])$
is the space of \emph{equivalence classes} of integrable functions
modulo a.e.~equivalence.) Unlike Brouwer's integrable functions,
the statement $f(0)=1$ is not meaningful for an integrable FR-construct
$f$. We will return to this point-free theme in Subsections~\ref{subsec:Point-free}
and \ref{subsec:Pointwise-approaches}.

Also in 1962, Zaslavski\u{\i} and Ce\u{\i}tin \cite{Zaslavskii.Ceitin:1962}
wrote about the singular coverings mentioned at the beginning of this
section. While their focus was on the pathological case of singular
coverings, they added the following note.
\begin{quote}
We call a covering $\Phi$ \emph{regular} if the sequence of
numbers $\sum_{k=0}^{n}|\Phi_{k}|$ is constructively convergent as
$n\rightarrow\infty$. The set $\mathcal{M}$ of {[}constructive real
numbers{]} will be said to be a \emph{set of measure zero} if for
arbitrary $\varepsilon$ there can be realized a regular $\varepsilon$-bounded
covering by intervals of the set. {[}...{]} Consequently, in spite
of the existence of constructive singular coverings, it is possible
to give a reasonable definition of the constructive concept of a set
of measure zero. Other concepts of the constructive theory of measure
can be defined in a similar way. \cite[p.~58 in English translation]{Zaslavskii.Ceitin:1962}
(Emphasis in original.)
\end{quote}
While Zaslavski\u{\i} and Ce\u{\i}tin do not define such ``other
concepts'', Demuth \cite{Demuth:1965,Demuth:1967a,Demuth:1967b,Demuth:1968,Demuth:1968b,Demuth:1969a,Demuth:1969b,Demuth:1969c,Demuth:1970a,Demuth:1973}
does take up this work, giving constructive definitions of integrable
functions and measurable sets.
(A detailed survey of Demuth's work on constructive measure theory
can be found in Demuth and Ku\v{c}era \cite{Demuth.Kucera:1979}.
Also see the surveys by Slaman and Ku\v{c}era \cite[Rmk~3.5]{Kucera.Slaman:2001}
and Ku\v{c}era, Nies, and Porter \cite{Kucera.Nies.Porter:}.) Demuth's
work is particularly relevant because he, independently of Martin-Löf
and Schnorr, defined the same randomness notions (or at least considered
the corresponding null sets). Ku\v{c}era, Nies, and Porter comment
on Demuth's path to randomness.
\begin{quote}
Demuth considered a number of different notions of effective null
set. They are equivalent to several major randomness notions that
have been introduced independently. 

It is striking that Demuth never actually referred to random or non-random
sequences. Instead, he characterized these classes in terms of non-approximability
in measure and approximability in measure, respectively. This reflects
the fact that Demuth\textquoteright s motivation in introducing these
classes differed significantly from the motivation of the recognized
\textquotedblleft fathers\textquotedblright{} of algorithmic randomness.
Whereas the various randomness notions were introduced and developed
by Martin-Löf, Kolmogorov, Levin, Schnorr, Chaitin, and others in
the context of classical probability, statistics, and information
theory, Demuth developed these notions in the context of and for application
in constructive analysis, where the notion of approximability plays
a central role \cite[\S4]{Kucera.Nies.Porter:}.
\end{quote}
Demuth's measure theory takes place entirely on the constructive reals.
A property $P$ of the constructive real numbers is said to hold for
\emph{almost every constructive real number} if (in modern terminology)
it holds outside of a Schnorr null set \cite[p.~87]{Demuth.Kucera:1979}.
Demuth gave a set-point interpretation of Šanin's point-free approach
(see the remark in \cite{Demuth:1968}) as follows.  A partial function $f\colon{\subseteq{}}\mathbb{R}_{\textnormal{comp}}\rightarrow\mathbb{R}_{\textnormal{comp}}$
is \emph{integrable} if there is a computable sequence of rational
step functions $s_{n}$ such that for all $n\geq m$, $\|s_{m}-s_{n}\|_{L^{1}}\leq2^{-m}$
and $f(x)=\lim_{n}s_{n}(x)$ for almost every $x\in\mathbb{R}_{\textnormal{comp}}$.
The integral $\int_{0}^{1}f(x)\,dx$ is equal to $\lim_{n}\int_{0}^{1}s_{n}(x)\,dx$
(where the integral of the step function $s_{n}$ is defined in the
usual way). Demuth similarly defines a \emph{measurable} function
using the metric $\rho(f,g)=\int_{0}^{1}\frac{|f(x)-g(x)|}{1+|f(x)-g(x)|}\,dx$.
A set $A\subseteq\mathbb{R}_{\textnormal{comp}}$ is \emph{measurable}
if there is an integrable function $f\colon{\subseteq{}}\mathbb{R}_{\textnormal{comp}}\rightarrow\mathbb{R}_{\textnormal{comp}}$
such that $\mathbf{1}_{A}(x)=f(x)$ for almost every\ $x\in\mathbb{R}_{\textnormal{comp}}$.
Then $\mu(A)$ is defined as $\int_{0}^{1}f(x)\,dx$ \cite[\S4]{Demuth.Kucera:1979}. 

While Demuth's measurable sets are restricted to the constructive
real numbers, this is just the computable interpretation of constructive mathematics at play.
His definitions work equally well on the whole unit interval, and if taken as such, they are 
constructively equivalent to those of Brouwer.%
\footnote{When Demuth considers an ``integrable function''  
$f\colon{\subseteq{}}\mathbb{R}_{\textnormal{comp}}\to\mathbb{R}_{\textnormal{comp}}$ he is defining $f$
as a constructive limit of rational ``step functions'' $s_n$.  
While these ``step functions'' are only defined on $\mathbb{R}_{\textnormal{comp}}$, they have natural extensions $\bar{s}_n$ defined on $[0,1]$.  The classical limit $\lim_n \bar{s}_n$ of these step functions converges almost everywhere to a function $\bar{f}\colon[0,1]\to \mathbb{R}$.  Then $f=\bar{f}\upharpoonright\mathbb{R}_{\textnormal{comp}}$ and
the classical integral of $\bar{f}$ is the same as Demuth's ``integral'' of $f$.  
Moreover, a ``measurable set'' $A \subseteq \mathbb{R}_{\textnormal{comp}}$ in Demuth's terminology can be identified with a $\{0,1\}$-valued ``integrable function'' $f$.  By extending $f$ to its classical counterpart $\bar{f}$, we get a set $\bar{A} = \{x : f(x) = 1\}$ such that $A = \bar{A} \cap \mathbb{R}_{\textnormal{comp}}$ for ``almost every constructive real'' $x$ in the sense of Demuth,
and Demuth's ``measure'' of $A$ is the same as the classical measure of $\bar{A}$.
}%

Demuth proved constructive versions of a number of differentiability results in measure
theory including the Lebesgue differentiation theorem \cite[Thm.~4.14]{Demuth.Kucera:1979}.
Demuth was particularly interested in the differentiability of functions
of bounded variation. He showed that for every constructively absolutely
continuous function $f\colon\mathbb{R}_{\textnormal{comp}}\rightarrow\mathbb{R}_{\textnormal{comp}}$,
the set of non-differentiable\footnote{Technically, this is a notion of ``non-pseudo-differentiability''
since Markov computable functions are only defined on constructive
reals. See \cite{Demuth.Kucera:1979} or \cite{Kucera.Nies.Porter:}
for more details.} points can be covered by a (not necessarily regular) constructive
covering. Translated into a modern perspective, Demuth's result shows
that absolutely continuous Markov computable functions are differentiable
at Martin-Löf randoms (cf.\ Theorem~\ref{thm:MLR-bdd-var}).
To avoid the paradox of singular coverings, Demuth (slightly) abandoned
Church's thesis, enlarging the constructive interval to contain ``pseudo-reals'',
that is reals computable in the halting problem, $\emptyset'$ (see, for instance, \cite{Demuth:1975} and  \cite{Demuth:1975zr}).

\subsection{Bishop's constructive mathematics\label{subsec:Bishop}}

In 1967, Bishop published a book on constructive mathematics \cite{Bishop:1967lq},
showing that a large amount of mathematical analysis could
be proved constructively. A major portion of his work was on measure
theory. Whereas Brouwer's intuitionism and the constructive mathematics of
the Russian school allows one to prove nonclassical results (such as
all functions are uniformly continuous or all functions are computable)
Bishop's constructivism is compatible with classical mathematics \cite{Beeson:1985}.
Therefore, any result proved in Bishop's book is classically valid,
but also constructive \textemdash{} and therefore has a computable
interpretation.

Bishop's measure theory progressed through a number of revisions.
His first development \cite[Ch.~6]{Bishop:1967lq} was for probability
measures on locally compact metric spaces. (See Bridges and Demuth
\cite{Bridges.Demuth:1991} or Beeson \cite[{\S}I.13]{Beeson:1985}\cite{Beeson:2005}
for short presentations.) Later Bishop and Cheng \cite{Bishop.Cheng:1972}
extended this framework to arbitrary integration spaces via the Daniell
integral. (Also see Bishop and Bridges \cite[Ch.~6]{Bishop.Bridges:1985}.) In both cases,
measures are defined via a linear integration functional. We will
briefly explain how Bishop's approach applies to the space $[0,1]$
with the Lebesgue measure $\mu$. This measure $\mu$ can be defined
via the Riemann integral $\int_{0}^{1}f\,(x)\,dx$ on uniformly continuous
functions $f\colon[0,1]\rightarrow\mathbb{R}$. An \emph{integrable function}
is a partial function $f\colon{\subseteq{}}[0,1]\rightarrow\mathbb{R}$
constructed as follows. Take a sequence of uniformly continuous functions
$f_{n}$ such that $\sum_{n}\int_{0}^{1}|f_{n}(x)|\,dx$ constructively
converges.  Set the domain of $f$ to be the set of all $x\in [0,1]$ 
such that $\sum_{n}|f_{n}(x)|$ constructively
converges. For such $x$, set $f(x)=\sum_{n}f_{n}(x)$.
A set is \emph{full} if it contains the domain of some integrable
$f$.

If the sequence $f_{n}$ is a computable sequence of uniformly continuous
functions, then the corresponding full set $\{x\colon\sum_{n}|f_{n}(x)|\ \text{converges}\}$
is the complement of a Schnorr null set. Conversely, every Schnorr
null set is of this form (Theorem~\ref{thm:SR-monotone}).
Moreover, Bishop's definitions and theorems largely agree with those
of Brouwer.\footnote{Unlike Brouwer, Bishop does not adopt the fan principle. Therefore,
he cannot prove Ergorov's theorem that almost everywhere convergence
is the same as almost uniform convergence. Instead his definition
of almost everywhere convergence is closer to almost uniform convergence.
In particular, his dominated convergence theorem is weaker than Brouwer's,
and therefore weaker than the classical version.} A noteworthy constructive theorem of Bishop is that every measurable
set of positive measure contains a point \cite[Ch.~6, Prop.~2]{Bishop:1967lq}
(compare with Proposition~\ref{prop:Schnorr-null-contains-comp-pt-1}).

Bishop-style constructivism continues to received a lot of attention.
There have been a number of results in Bishop-style constructive measure theory 
and probability theory \cite{Chan:1969,Chan:1972,Chan:1974,Chan:1974a,Chan:1975,Bridges:1977a,Bridges:1979,Chan:1981},
including on advanced topics such as 
ergodic theory \cite{Bishop:1967lq, Bishop:1967a, Nuber:1972, Spitters:2002, Spitters:2006, Spitters:2006b},
stochastic processes \cite{Chan:1972a,Chan:1976,Chan:1981},
potential theory \cite{Chan:1977,Chan:1981},
and quantum mechanics \cite{Hellman:1993,Hellman:1997,Bridges.Svozil:2000}.
It also influenced some of the later Russian constructivists, such as Kreinovich's 
work on constructive Wiener measure \cite{Kreinovich:1974a,Kreinovich:1974}.

\subsection{Martin Löf's constructive mathematics\label{subsec:Martin-Lof}}

In 1966, Martin-Löf \cite{Martin-Lof:1966mz} introduced his definition
of constructive null set and Martin-Löf randomness. Later, he turned
his focus to constructive type theory. In 1970, during this transitionary
period, Martin-Löf wrote a book on constructive analysis \cite{Martin-Lof:1970a},
including a chapter devoted to measure theory. 

His style is similar to that of the Russian school, mentioning computable
objects explicitly, but he does not work explicitly in the constructive
real numbers. Indeed, Martin-Löf rejects the idea that the continuum
is made up only of computable points. He invokes the existence of
singular coverings \textemdash{} which is a stronger form of Kreisel and Lacombe's
theorem \cite{Kreisel.Lacombe:1957} that there is an effective open set
not equal to the reals which contains all computable reals.  Of this result Martin-L\"of writes,
\begin{quote}
In classical mathematics the continuum is conceived as the totality
of its points. One might therefore, like Markov and his school, try
to constructivize the continuum by looking upon it as the totality
of its constructive points. This leads, as shown by Kreisel and Lacombe's theorem,
to a theory which is radically different from Brouwer's. \cite[p.~57]{Martin-Lof:1970a}
\end{quote}
Martin-Löf's definition of measurable set is as follows.
\begin{quote}
A Borel set $A$ is \emph{measurable} if for every computable real
number $\varepsilon>0$ {[}...{]} we can find a simple set $P$ {[}that
is, a finite union of disjoint basic open sets{]} and an open set
$U$ such that
\[
A\triangle P\subseteq U
\]
 and $U$ is bounded by $\varepsilon$ {[}that is, $\mu(Q)\leq\varepsilon$
for every simple set $Q\subseteq U${]}. \cite[p.~92]{Martin-Lof:1970a}
\end{quote}
Notice that unlike Brouwer's definition before, $\mu(U)$ need not
(constructively) exist. Martin-Löf was aware of the difference.
\begin{quote}
There are several reasons why we have chosen a more inclusive definition
of measurability than Brouwer did. First of all, the problem has always
been to find a consistent extension of the measure, first defined
for simple sets only, which goes as far as possible. Our extension,
although going further than Brouwer's entails no departure from the
constructive standpoint. \cite[p.~100]{Martin-Lof:1970a}
\end{quote}
He was also aware that this would lead to a singular covering of the
computable reals.
\begin{quote}
Secondly, the fact that our definition allows the construction of
an inner limit set of measure zero which contains all constructive
points, although troublesome to those whose continuum consists of
constructive points only, is in full agreement with the intuitionistic
concept of the continuum as a medium of free choice. \cite[p.~101]{Martin-Lof:1970a}
\end{quote}
Last, he ends his defense of his definition of measurable set by referring
to his notion of randomness and his theorem that there is a universal
Martin-Löf constructive null set.
\begin{quote}
Thirdly, the definition we have adopted enables us to prove a new
theorem which may serve as a justification of the notion of a random
sequence conceived by von Mises and elaborated by Wald and Church
1940. \cite[p.~101]{Martin-Lof:1970a}
\end{quote}

\subsection{Reverse mathematics\label{subsec:Reverse-math}}

Constructive mathematics gets its computable interpretation from restricting
itself to a subset of classical logic. There is, however, another
way of doing mathematics, which both has a computational interpretation
and uses classical logic. That is $\mathsf{RCA}_{0}$, a subsystem
of second order arithmetic, which forms the basis for the reverse
mathematics program of Friedman and Simpson \cite{Simpson:2009}.

While $\mathsf{RCA}_{0}$ and $\mathsf{BISH}$ (Bishop's constructive
system) are similar, there are also key differences. $\mathsf{RCA}_{0}$
uses classical logic, whereas $\mathsf{BISH}$ does not. Conversely,
various versions of the axiom of choice hold in $\mathsf{BISH}$ which
do not in $\mathsf{RCA}_{0}$. There are also differences in methodology
between reverse mathematics and Bishop style constructivism. While
a constructivist desires to move much of mathematics under a constructive
lens, the goal of reverse mathematics is to determine exactly which
set existence axioms (added to $\mathsf{RCA}_{0}$) are required to
prove a theorem of mathematics. It turns out that a large number of
theorems in mathematics are equivalent (over $\mathsf{RCA}_{0}$)
to one of the following five systems of reverse mathematics (listed
in increasing proof-theoretic strength), $\mathsf{RCA}_{0}$, $\mathsf{WKL}_{0}$,
$\mathsf{ACA}_{0}$, $\mathsf{ATR}_{0}$, and $\Pi_{1}^{1}\text{-}\mathsf{CA}_{0}$.
(For an introduction to reverse mathematics, see \cite{Simpson:2009}.)

However, when Yu and Simpson \cite{Yu.Simpson:1990} looked at the
reverse mathematics of measure theory, another system $\mathsf{WWKL}_{0}$
arose, strictly between $\mathsf{RCA}_{0}$ and $\mathsf{WKL}_{0}$.
The axiom weak weak König's lemma ($\mathsf{WWKL}$) states that if
$T$ is a subtree of $\{0,1\}^{<\mathbb{N}}$ with no infinite path,
then
\[
\lim_{n\rightarrow\infty}\frac{|\{\sigma\in T : |\sigma|=n\}|}{2^{n}}=0.
\]
The system $\mathsf{WWKL}_{0}$
is $\mathsf{RCA}_{0}+\mathsf{WWKL}$. Yu and Simpson \cite{Yu:1987ff,Yu.Simpson:1990,Yu:1990,Yu:1993,Yu:1994oz,Yu:1996}
showed that a large amount of measure theory can be developed in the
system $\mathsf{WWKL}_{0}$. Moreover, the axiom $\mathsf{WWKL}$
is equivalent over $\mathsf{RCA}_{0}$ to a number of basic principles
of measure theory (see \cite[{\S}X.1]{Simpson:2009}):
\begin{itemize}
\item Every closed set of positive measure contains a point.
\item Every sequence of intervals $(a_{n},b_{n})$ covering $[0,1]$ satisfies
$\sum_{n=0}^{\infty}(b_{n}-a_{n})\geq1$.
\item If $U,V\subseteq\{0,1\}^{\mathbb{N}}$ are disjoint open sets such
that $U\cup V=\{0,1\}^{\mathbb{N}}$ then $\mu(U)+\mu(V)=1$.
\end{itemize}
In short (using the terminology from earlier), $\mathsf{WWKL}$ prevents
the pathologies of singular coverings. $\mathsf{WWKL}$ is also closely
related to Martin-Löf randomness. Indeed $\mathsf{WWKL}$ is equivalent (over $\mathsf{RCA}_{0}$)
to the existence of a Martin-Löf random relative to each $x\in\{0,1\}^{\mathbb{N}}$
\cite[Thm~3.1]{Avigad.Dean.Rute:2012}.

The reverse mathematics of measure theory relies on both point-free
definitions of integrable functions and sets (using the $L^{1}$ metric
space), as well as pointwise versions. Yu \cite{Yu:1994oz}, Brown,
Giusto, and Simpson \cite{Brown.Giusto.Simpson:2002}, Simic \cite{Simic:2004}, and Avigad,
Dean, and Rute \cite{Avigad.Dean.Rute:2012}
define the pointwise version of an integrable function $f$ as the
pointwise limit of a sequence $(p_{n})$ of certain continuous functions which
approximate $f$ in the $L^{1}$-norm. Using $\mathsf{WWKL}_{0}$
they show that these $(p_{n})$ converge outside of a (relativized) Martin-Löf
null set.\footnote{There is some ambiguity in the definition of ``null set'' in this literature.
Yu \cite{Yu:1994oz} considers almost everywhere to mean outside a
``null $G_{\delta}$ set,'' that is, a set $G=\bigcap_{n}U_{n}$
where the sets $U_n$ are open and $\lim_{k}\mu(\bigcap_{n<k}U_{n})=0$
(more exactly, for all $\varepsilon>0$ there is some $n$ such that $\mu(\bigcap_{k<n} U_k) < \varepsilon$).  
In reverse mathematics, this would correspond to a null set for weak $2$-randomness.
This is likely a error, because later in the same paper she assumes the stronger property 
that $\mu(U_n) \leq 2^{-n}$.
This would correspond to a Martin-Löf null set. 
Brown, Giusto, and Simpson \cite{Brown.Giusto.Simpson:2002}
and Simic \cite{Simic:2004} both use the Martin-Löf random version.
Avigad, Dean, and Rute use null $G_{\delta}$ sets, but in the context
of the axiom $2\text{-}\mathsf{WWKL}$ where the differences are less
important.} By the later work of Pathak, Rojas, and Simpson \cite{Pathak.Rojas.Simpson:2014}
and Rute \cite{Rute:2013pd}, as well as the constructivists already
mentioned, it is likely provable in $\mathsf{RCA}_{0}$ that this
convergence happens outside of a (relativized) Schnorr null set. Indeed, it seems that
a large amount of measure theory can be developed in $\mathsf{RCA}_{0}$
\textemdash{} including many of the results proved using $\mathsf{WWKL}_{0}$
in Yu \cite{Yu:1987ff,Yu.Simpson:1990,Yu:1994oz}, Brown, Giusto,
and Simpson \cite{Brown.Giusto.Simpson:2002}, and Simic \cite{Simic:2004}.

Nonetheless, there are a number of theorems not provable in $\mathsf{RCA}_{0}$.
For example, over $\mathsf{RCA}_{0}$, both (a certain version of)
the monotone convergence theorem \cite{Yu:1994oz} and the Vitali
covering theorem \cite{Brown.Giusto.Simpson:2002} are equivalent
to $\mathsf{WWKL}$. Yu showed that Borel regularity is provable in
$\mathsf{ATR}_{0}$ \cite{Yu:1993}, and that many theorems of measure
theory are equivalent (over $\text{\ensuremath{\mathsf{RCA}_{0}}}$)
to $\mathsf{ACA}$ \cite{Yu:1987ff,Yu:1990,Yu:1996}. Simic \cite{Simic:2004, Simic:2007} 
showed that the pointwise ergodic theorem is equivalent to $\mathsf{ACA}$.  Avigad and Simic \cite{Avigad.Simic:2006} showed the same for the mean ergodic theorem.
Avigad, Dean, and Rute \cite{Avigad.Dean.Rute:2012} showed that the
following are all equivalent (over $\text{\ensuremath{\mathsf{RCA}_{0}}}$)
to an axiom called $2\text{-}\mathsf{WWKL}$:
\begin{itemize}
\item Egoroff's theorem
\item the Cauchy version of the dominated convergence theorem
\item every $G_{\delta}$ set of positive measure contains a point
\item collection axiom $\mathsf{B}\Sigma_{2}$ plus the existence of a $2$-random
(Definition~\ref{def:n-random}) relative to each $x\in\{0,1\}^{\mathbb{N}}$.
\end{itemize}

We also remark that reverse mathematics has inspired a similar program
called constructive reverse mathematics which replaces the base theory
$\mathsf{RCA}_{0}$ with $\mathsf{BISH}$ (or some other suitable
constructive base theory). Nemoto \cite{Nemoto:2010} has investigated
$\mathsf{WWKL}$ in constructive reverse mathematics, and Beeson \cite{Beeson:2005}
has investigated the constructive strength of the statement the every
sequence of intervals $(a_{n},b_{n})$ covering $[0,1]$ satisfies
$\sum_{n=0}^{\infty}(b_{n}-a_{n})\geq1$.

\subsection{Computable analysis\label{subsec:Computable-analysis}}

Computable analysis, like constructive analysis, studies the computable
content of theorems in mathematical analysis. Unlike constructive
mathematics or $\mathsf{RCA}_{0}$, computable analysis does not rely
on any restricted framework of logic or mathematics. Instead, it explicitly
refers to computable functions, computable reals, etc. Also like constructive
analysis, computable analysis developed in many separate but interrelated
traditions (see Avigad and Brattka \cite{Avigad.Brattka:2014} for
a historical survey). 

Early work combining the measure-theoretic and computability theoretic
can be found in Kreisel and Lacombe's \cite{Kreisel.Lacombe:1957}
result that there is a $\Sigma_{1}^{0}$ set of arbitrarily small
measure covering all the computable reals, as well as Jockusch and
Soare's \cite{Jockusch.Soare:1972} work showing that the complete
extensions of Peano arithmetic have measure zero.

Later Friedman and Ko \cite{Ko.Friedman:1982,Ko:1986,Ko:1991} studied
the polynomial-time complexity of measurable functions and sets, via
approximability. Ko \cite[Ch.~5]{Ko:1991} showed that by replacing
``polynomial-time computable'' with ``computable'', the approximable
sets and functions are equivalent to the measurable sets and functions
of Šanin. Pour-El and Richards \cite{Pour-El.Richards:1989} developed
computable analysis on Banach spaces, focusing significantly on $L^{p}$
spaces, again using a point-free treatment similar to Šanin.

Starting around the turn of the millennium, there have been a large
number of papers on computable measure theory. Many of these papers
follow the type-$2$ effectivity approach \cite{Weihrauch00,Brattka.Hertling.Weihrauch:2008} or the
domain theory approach \cite{Abramsky.Jung:1994}. Most of these papers have
been concerned with computable representations of measures or probability distributions
\cite{Weihrauch:1999,Muller:1999,Wu.Weihrauch:2006,Schroder.Simpson:2006,Schroder:2007kx,Edalat:2009nx,Hoyrup.Rojas:2009,Mori.Tsujii.Yasugi:2013,Collins:}.
While most of these representations are equivalent, the generality
of the underlying spaces vary. Other papers have been about computable
representations of measurable sets, integrable functions, and measurable
functions or their properties
\cite{Wu.Ding:2005,Wu.Ding:2006,Edalat:2009nx,Hoyrup.Rojas:2009a,Hoyrup.Rojas:2009b,Bosserhoff:2008,Wu:2012,Weihrauch.Tavana:2014,Weihrauch:2017,Collins:}. Again, these representations are basically
equivalent, but the details are a bit more complicated. As we will
see in Section~\ref{sec:Foundations}, the various representations
can be broken up into three categories corresponding to those that
are point-free, those that are defined outside of a Martin-Löf null
set, and those that are defined outside of a Schnorr null set. 

Yet others are interested in computable stochastic processes, including Brownian motion \cite{Davie.Fouche:2013,Fouche.Mukeru:2013,Bilokon.Edalat:2017,Collins:} and L\'evy and Feller processes \cite{Maler:2016}.

There have also been a number of papers about the computability of various theorems in measure theory, e.g.\
the ergodic theorem \cite{Avigad.Gerhardy.Towsner:2010,Hoyrup:2013},
the Riesz representation theorems \cite{Lu.Weihrauch:2008,Lu.Weihrauch:2007,Jafarikhah.Weihrauch:2013},
various decomposition theorems 
\cite{Jafarikhah.Weihrauch:2014,Hoyrup.Rojas.Weihrauch:2012},
as well as other results \cite{Pauly.Fouche:2017}.
Additional works on computable probability theory are motivated by probabilistic programming
\cite{Freer.Roy:2012,Ackerman.Freer.Roy:2017,Mislove:2016,Mislove:2018,Vakar.Kammar.Staton:2018,Ackerman.Avigad.Freer.ea:2018,Huang.Morrisett.Spitters:2018}, and others still, as we will see, are motivated by work
in algorithmic randomness.

\subsection{Algorithmic randomness\label{subsec:Algorithmic-randomness}}

Algorithmic randomness is closely tied to computable analysis, and
many researchers have focused on exploring these connections.

In the 1960s and 1970s, Solomonoff, Kolmogorov, Martin-Löf, Levin,
Schnorr, Chaitin, and others grappled with the relationship between
information theory, probability theory, dynamical systems, and computability. (See Schnorr \cite{Schnorr:1977} for a survey of that time period.)
Besides the already mentioned characterizations of Martin-Löf and
Schnorr randomness via measures and effectively open sets, there are
also characterizations of randomness via algorithmic complexity (see \cite{Downey.Hirschfeldt.Nies.ea:2006,Li.Vitanyi:2008,Nies:2009,Downey.Hirschfeldt:2010}).
This is closely connected to the work on effective Hausdorff dimension by 
Lutz, Mayordomo, and others \cite{Lutz:2000,Lutz:2003,Lutz:2005,Mayordomo:2002,Reimann:2008}.  It also led to fruitful research by V'yugin and
others connecting algorithmic complexity, entropy, dimension, and ergodic theory 
\cite{Vyugin:1998,Hochman:2009,Hoyrup:2012,Simpson:2015}.

While most work in algorithmic randomness has taken place on Cantor
space $\{0,1\}^{\mathbb{N}}$ or the unit interval with the Lebesgue
measure, there have been extensions of the theory to other spaces.
Martin-Löf \cite[\S V]{Martin-Lof:1966mz} considered Martin-Löf randomness for other
Bernoulli measures, and Schnorr \cite[Ch.~5]{Schnorr:1971rw} did the same for Schnorr randomness. 
Levin \cite{Levin73, Levin:1976uq, Levin84} 
generalized Martin-Löf randomness
to noncomputable probability measures on Cantor space.

Asarin and Prokrovskii \cite{Asarin.Pokrovskii:1986} extended Martin-Löf
randomness to Brownian motion, and this work has been taken up by
Fouché and others \cite{Fouche:2000b,Fouche:2000c,Kjos-Hanssen.Nerode:2007,Fouche:2008,Fouche:2009,Hoyrup.Rojas:2009b,Kjos-Hanssen.Nerode:2009,Kjos-Hanssen.Szabados:2011,Fouche:2014,Fouche.Mukeru.Davie:2014,Allen.Bienvenu.Slaman:2014}.
Hertling and Weihrauch \cite{Hertling.Weihrauch:2003}, Gács \cite{Gacs:2005},
and Hoyrup and Rojas \cite{Hoyrup.Rojas:2009} extended Martin-Löf's
and Levin's ideas to other computable metric spaces. Hoyrup and Rojas
\cite{Hoyrup.Rojas:2009a} also realized that the effectively measurable
functions and sets of Edalat \cite{Edalat:2009nx} could be characterized
in terms of Martin-Löf randomness. This approach is called layerwise
computability, and Hoyrup and Rojas's ideas have been extended to
Schnorr randomness by Pathak, Rojas, and Simpson \cite{Pathak.Rojas.Simpson:2014},
Miyabe \cite{Miyabe:2013uq}, and Rute \cite{Rute:2013pd}.

In Section~\ref{sec:Characterizing-randomness} we survey more results showing that
Schnorr randomness, computable randomness, and Martin-Löf randomness
can all be characterized via classical convergence theorems in analysis,
and we will highlight the powerful tools which make it easy to translate
analytic theorems into results about randomness.

\subsection{Point-free measure theory: measure algebras, locales, forcing, and
category theory\label{subsec:Point-free}}

Measure theory is usually presented in a point-set-theoretic manner:
One first develops a theory of points, sets, and functions. Then certain
sets and functions are deemed to be ``measurable''. This is, more
or less, the approach of many of the early constructivists, including
Brouwer, Demuth, Bishop, and Martin-Löf. In classical practice, one
often goes a step further, considering equivalence classes modulo almost everywhere equivalence.
For example, let $\mu$ be a measure on $\{0,1\}^{\mathbb{N}}$. Then one
has the vector space $L^{0}(\mu)$ of measurable functions modulo
$\mu$-a.e.~equivalence, the Banach space $L^{1}(\mu)$ of $\mu$-integrable
functions modulo $\mu$-a.e.~equivalence, and the complete Boolean
algebra of measurable sets modulo $\mu$-a.e.~equivalence. These
spaces are all complete separable metric spaces. 

The point-free approach to measure theory proceeds differently.  
In it, one formally defines ``measurable functions'' and
``measurable sets'' directly as objects in the above metric spaces, without
explicitly mentioning the underlying functions, sets, and points.  The ``functions''
and ``sets'' in these spaces are merely formal objects, not actual functions or sets. 

Indeed, we already saw that Šanin \cite{Sanin:1968dq} and Kosovski\u{\i}
\cite{Kosovskii:1969a,Kosovskii:1969b,Kosovskii:1969,Kosovskii:1970,Kosovskii:1973,Kosovskii:1973a} 
used this approach to reason about a large
subset of probability theory. An equivalent approach is given by Coquand
and Palmgren \cite{Coquand.Palmgren:2002}, who construct a space
of measurable sets as the metric completion of a countable Boolean
ring with a measure on it. Using this approach, they give constructive
proofs of Kolmogorov's 0-1 law, the first Borel-Cantelli lemma, and
the strong law of large numbers.
Spitters \cite{Spitters:2006} extended this approach to
include integrable and measurable functions.

This all ties in to point-free topology, a field which has close connections
to constructive mathematics (see Section~5 of \cite{sep-mathematics-constructive}). 
One type of point-free space, generalizing topological spaces,
is a \emph{locale}. A locale is given by a partial order which behaves
like the partial order of open sets in a topological space under the
subset relationship \textemdash{} this partial order has top and bottom
elements, is closed under arbitrary joins $\bigcup$ and finite meets
$\cap$, and satisfies the distributive law $U\cap\left(\bigcup_{i\in I}V_{i}\right)=\bigcup_{i\in I}\left(U\cap V_{i}\right)$.
A \emph{morphism} $f\colon X\rightarrow Y$ between locales $X$
and $Y$ behaves like a continuous function between topological spaces;
formally it is given by a map from the ``open sets'' of $Y$ to
the ``open sets'' of $X$ which preserves finite meets, and arbitrary
joins. If $\mu$ is a Borel probability measure on $[0,1]$, the measurable
sets modulo $\mu$-a.e.~equivalence form a locale, the \emph{$\mu$-measurable locale}.\footnote{%
Recall that the Boolean algebra of measure sets modulo a.e.~equivalence
is complete, and therefore closed under arbitrary joins, not just countable joins.}
If we denote the $\mu$-measurable
locale as $(\{0,1\}^{\mathbb{N}},\mu)$, then the morphisms $f\colon(\{0,1\}^{\mathbb{N}},\mu)\rightarrow\mathbb{R}$
(where $\mathbb{R}$ has the standard topology/locale) are exactly
the measurable functions modulo $\mu$-a.e.~equivalence.\footnote{%
While we are not aware of a fully constructive treatment of the $\mu$-measurable locale, 
we note that none of the constructive definitions of measurable set given so far are 
constructively closed under infinite countable unions.  
Nonetheless, we suggest as a candidate the locale whose ``open sets'' are given 
by the representation $\delta_+$ in \cite{Weihrauch.Tavana:2014,Weihrauch:2017} 
of point-free measurable sets computable from below.  
Computably, this has the closure properties of a $\sigma$-locale (\cite[Thm.~4.1]{Weihrauch:2017}) 
and the computable morphisms $f\colon(\{0,1\}^{\mathbb{N}},\mu)\rightarrow\mathbb{R}$ 
are exactly the point-free measurable functions of Šanin and others 
(see the representation $\delta_\textrm{mfo}$ in \cite{Weihrauch:2017}).}
(Notice, 
that if $\mu$ is the Lebesgue measure, the $\mu$-measurable locale
is not homeomorphic to any topological space\footnote{Assume the locale $(\{0,1\}^{\mathbb{N}},\mu)$ is homeomorphic to a topological
space $X$. For each measurable set $B$ of $(\{0,1\}^{\mathbb{N}},\mu)$,
let $\widehat{B}$ be the corresponding open set in $X$. Consider
a point $x\in X$. For each $k$, there is exactly one $\sigma\in\{0,1\}^{k}$
such that $x\in\widehat{[\sigma]}$. Existence follows from $\bigcup\{\widehat{[\sigma]} : \sigma\in\{0,1\}^{k}\}=\widehat{\{0,1\}^{\mathbb{N}}}=X$.
Uniqueness follows from $\widehat{[\sigma]}\cap\widehat{[\tau]}=\widehat{\varnothing}=\varnothing$.
Let $U_{k}=\bigcup \{[\sigma] : \sigma\in\{0,1\}^{k},x\notin\widehat{[\sigma]}\}$.
Then $\mu(U_{k})=1-2^{-k}$. Since, $\mu(\bigcup_{k}U_{k})=1$, we
have $x\in\bigcup_{k}\widehat{U_{k}}$ contradicting the definition
of $U_{k}$.}, necessitating the use of point-free methods.)

Not only can one reason about measure theory in the locale of $\mu$-measurable
sets, but one can also use the measurable locale to give a rigorous
formulation of randomness. One can naively view probability theory
as the study of random events, whereby a random event is one satisfying
every probability one property. While such ``random events'' do
not actually exist, the measurable locale can be viewed as the space
of random points.

This ties in closely with set-theoretic forcing. In forcing one has
two mathematical universes $\mathcal{U}\subseteq\mathcal{V}$, the
smaller of which is known as the \emph{ground model}. If one takes
a locale $L$ in the ground model, forcing allows one to construct
objects $g$ in the larger universe, called \emph{generics}, which
behave as if they are ``points'' in the ``space'' $L$. In Solovay forcing \cite[Ch.~26]{Jech:2003},
 one forces with the $\mu$-measurable locale (also known as the measure
algebra of $\mu$-measurable sets). The resulting generics are known
as \emph{Solovay randoms}. Being a Solovay random is equivalent to
being in every $\mu$-measure one set in the ground model. We now
have an analogy to Schnorr randomness, which is equivalent to being
in every constructive $\mu$-measure one set. In Subsection~\ref{subsec:Forcing}
we strengthen this analogy by giving an effective version of Solovay
forcing, where the generics are the Schnorr randoms.

Simpson \cite{Simpson:2012} has proposed another locale as a model
for randomness. The \emph{locale of random sequences} is the locale
of open sets of $\{0,1\}^{\mathbb{N}}$ modulo a.e.~equivalence. 
This locale is analogous to Kurtz randomness. (Recall, a point is Kurtz
random if it is in every measure one effectively open set.) Like Kurtz
randomness, the locale of random sequences does not always satisfy
the strong law of large numbers \cite{Simpson:2009b}.
This analogy can also be made formal with forcing.

Locales and forcing are part of a larger categorical framework, including
sheaves, toposes, type theory, and other tools important to modern constructive
mathematics. There is new work approaching measure theory and 
probability from this perspective 
\cite{Jackson:2006,Rodrigues:2009,Vickers:2011,Simpson:2017,Simpson:,Faissole.Spitters:, Clark:math-overflow,nlab:probability-theory}, 
much of it building on the work of Giry \cite{Giry:1982}. While this work
is in progress, we conjecture that in these settings, questions about
randomness will once again naturally arise, both implicitly and explicitly.
To the extent that these categorical models are reasoned about constructively
or computably, we will again find connections and analogies with algorithmic
randomness.

\section{\label{sec:Characterizing-randomness}Characterizing algorithmic
randomness via theorems in analysis}
One of the most important characteristics of algorithmic randomness
is that it satisfies many of the almost everywhere theorems of mathematics.
For example, every Schnorr random (and therefore every Martin-Löf
random) satisfies the strong law of large numbers \textemdash{} that
is the sequence of binary digits $(x_{n})$ of $x\in\{0,1\}^\mathbb{N}$ satisfies
$\lim_n\frac{1}{n}\sum_{k=0}^{n-1}x_k=\frac{1}{2}$. However, the strong
law of large numbers, or even the more advanced law of the iterated
logarithm, does not characterize Schnorr randomness. This is simply
because one can construct a computable sequence $x\in\{0,1\}^\mathbb{N}$ for which
both theorems hold \cite{Pincus.Singer:2012}.

However, it turns out that many of the more general theorems in analysis
and probability, usually involving a free parameter, do characterize the
standard algorithmic randomness notions. These characterization results show that
Martin-Löf randomness, computable randomness, and Schnorr randomness
are all natural randomness notions. What follows is a survey of some
of these results.

\subsection{Monotone convergence\label{subsec:Monotone-convergence}}

A variation of the monotone convergence theorem in measure theory
states that given an increasing sequence of continuous nonnegative
functions $g_{n}\colon[0,1]\rightarrow[0,\infty)$, if $\sup_{n}\int_{0}^{1}g_{n}\,dx$
is finite, then $\sup_{n}g_{n}(x)<\infty$ for almost every $x$.
This can be used to characterize Schnorr randomness and Martin-Löf
randomness.
\begin{thm}[Levin \cite{Levin:1976uq}]
\label{thm:MLR-monotone}The following are equivalent for a real
$x\in[0,1]$.

\begin{enumerate}
\item The real $x$ is Martin-Löf random.
\item The supremum $\sup_{n}g_{n}(x)$ is finite for every increasing computable
sequence of continuous functions $g_{n}\colon[0,1]\rightarrow[0,\infty)$
such that $\sup_{n}\int_{0}^{1}g_{n}(x)\,dx$ is finite.
\end{enumerate}
Moreover, a set $E$ is a Martin-Löf null set if and only if $E\subseteq\{x:\lim_n g_n(x)=\infty\}$
for some such sequence $(g_{n})$.
\end{thm}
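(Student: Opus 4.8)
The plan is to prove both implications by moving between Martin-L\"of tests and increasing sequences of continuous functions, using the correspondence between null sets and functions whose integrals stay bounded but whose suprema blow up. The key idea in both directions is that for an effectively open set $U$ with rational intervals, one can build a computable nonnegative continuous function whose integral approximates $\mu(U)$ and which is large precisely on $U$; conversely, the superlevel sets of a continuous function are effectively open.

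First I would prove $(2)\Rightarrow(1)$ in its contrapositive form, and at the same time establish the easy direction of the ``Moreover'' clause. Suppose $x$ is not Martin-L\"of random, so $x\in\bigcap_n U_n$ for some Martin-L\"of test $(U_n)$ with $\mu(U_n)\le 2^{-n}$. For each $n$, write $U_n=\bigcup_k I^n_k$ as a union of rational intervals and let $h_n\colon[0,1]\to[0,\infty)$ be a computable continuous function with $h_n\le \mathbf{1}_{U_n}$ (for instance a piecewise-linear ``bump'' supported inside $U_n$) such that $h_n=1$ on a large compact subset of $U_n$ and $\int_0^1 h_n\,dx\le \mu(U_n)\le 2^{-n}$. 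Setting $g_n=\sum_{j\le n} h_j$ gives an increasing computable sequence of continuous functions with $\int_0^1 g_n\,dx\le \sum_j 2^{-j}\le 2$, so the integral condition in $(2)$ holds. The point is to arrange the bumps $h_j$ so that every $x\in\bigcap_n U_n$ eventually has $h_j(x)=1$ for infinitely many $j$, forcing $\sup_n g_n(x)=\lim_n g_n(x)=\infty$; this is where one must be slightly careful, refining each $U_n$ into a genuinely ``inner'' open set so that the bumps can reach value $1$ on the relevant points. This shows $\{x:\lim_n g_n(x)=\infty\}\supseteq\bigcap_n U_n$, covering the given null set, and simultaneously shows that failure of $(2)$ follows from failure of $(1)$.

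Next I would prove $(1)\Rightarrow(2)$, again by contraposition, which also gives the hard direction of the ``Moreover'' clause. Suppose $(g_n)$ is an increasing computable sequence of continuous functions with $C:=\sup_n\int_0^1 g_n\,dx<\infty$ but $\sup_n g_n(x)=\infty$; I want to produce a Martin-L\"of test capturing all such $x$. Let $g=\sup_n g_n$, so $\{x:g(x)=\infty\}=\bigcap_m\{x:g(x)>2^m\}$. For each threshold $t$, the set $\{x:g(x)>t\}=\bigcup_n\{x:g_n(x)>t\}$ is effectively open uniformly in $t$ (each $g_n$ is computable, so its strict superlevel set is $\Sigma^0_1$ uniformly). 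By Markov's inequality $\mu(\{x:g_n(x)>t\})\le \frac{1}{t}\int_0^1 g_n\,dx\le C/t$, and taking the union over $n$ gives $\mu(\{x:g(x)>t\})\le C/t$ as well, by monotone convergence of the measures of an increasing union. Choosing $t=t_m:=C\cdot 2^{m}$ yields an effectively open set $V_m:=\{x:g(x)>t_m\}$ with $\mu(V_m)\le 2^{-m}$, uniformly computable in $m$; thus $(V_m)$ is a Martin-L\"of test, and any $x$ with $\sup_n g_n(x)=\infty$ lies in every $V_m$, hence is not Martin-L\"of random. This proves $(1)\Rightarrow(2)$ and shows $\{x:\lim_n g_n(x)=\infty\}\subseteq\bigcap_m V_m$ is contained in a Martin-L\"of null set.

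I expect the main obstacle to be the careful handling of the strict-versus-nonstrict inequalities and the inner approximation in the $(2)\Rightarrow(1)$ direction: one must ensure the bump functions $h_j$ actually attain value $1$ on the points of $\bigcap_n U_n$ while keeping their integrals controlled and keeping the construction uniform (computable), which forces one to shrink each $U_n$ to a compactly contained effectively open subset before building the bumps. The measure-theoretic estimates (Markov's inequality, monotone convergence for $\mu$ of increasing unions) are routine, and the uniformity of the superlevel sets is immediate from computability of the $g_n$; the delicate bookkeeping is entirely in matching ``$\sup_n g_n(x)=\infty$'' with membership in $\bigcap_n U_n$ on the nose so that the ``Moreover'' equivalence of null sets comes out exactly rather than merely up to a subset of another null set.
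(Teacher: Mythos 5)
The paper itself gives no proof of this theorem; it is cited to Levin, so I am evaluating your argument on its own terms against the standard one.

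Your $(1)\Rightarrow(2)$ direction is correct: the superlevel sets $\{x: g_n(x)>t\}$ are uniformly effectively open, Markov's inequality plus monotone convergence gives $\mu(\{x:\sup_n g_n(x)>t\})\leq C/t$, and choosing thresholds $N\cdot 2^m$ for any integer $N\geq C$ (no computability of $C$ is needed, since a Martin-L\"of null set only has to be covered by \emph{some} test) yields the required test. The gap is in $(2)\Rightarrow(1)$. A single continuous $h_n\leq\mathbf{1}_{U_n}$ has $\{h_n=1\}$ closed and contained in the open set $U_n$, so it cannot equal $1$ on all of $U_n$; consequently a point of $\bigcap_n U_n$ that hugs the boundary of each $U_n$ can satisfy $h_n(x)<\varepsilon_n$ for every $n$ and escape with $\sum_n h_n(x)<\infty$. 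Your proposed repair --- shrinking each $U_n$ to a compactly contained inner open set before building the bump --- makes this worse, not better: the inner approximations cover only a proper subset of $\bigcap_n U_n$, so neither the contrapositive of $(2)\Rightarrow(1)$ nor the forward half of the ``Moreover'' clause (which must capture \emph{every} point of the given null set) goes through.

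The missing idea is a double limit. For each $n$ choose a computable sequence of continuous functions $h_{n,m}$ increasing in $m$ with $h_{n,m}\nearrow\mathbf{1}_{U_n}$ pointwise (e.g.\ $h_{n,m}=\max_{k\leq m}\phi_{k,m}$ where $\phi_{k,m}$ is piecewise linear, equal to $1$ on $[a_k+2^{-m},\,b_k-2^{-m}]$ and $0$ off $(a_k,b_k)$, for the $k$th rational interval of $U_n$), and set $g_N=\sum_{n=0}^{N}h_{n,N}$. This sequence is increasing in $N$, uniformly computable, has $\int_0^1 g_N\,dx\leq\sum_n\mu(U_n)\leq 2$, and for $x\in\bigcap_n U_n$ one gets $\sup_N g_N(x)\geq\sum_{n=0}^{K}\lim_N h_{n,N}(x)=K+1$ for every $K$, hence $\sup_N g_N(x)=\infty$ on the nose. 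With that substitution your overall architecture (contrapositives in both directions, Markov's inequality one way, a test-to-function construction the other) is the standard and correct route to Levin's theorem.
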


\begin{thm}[Rute \cite{Rute:}]
\label{thm:CR-monotone}The following are equivalent for a real $x\in[0,1]$.

\begin{enumerate}
\item The real $x$ is Schnorr random.
\item The supremum $\sup_{n}g_{n}(x)$ is finite for every increasing computable
sequence of continuous functions $g_{n}\colon[0,1]\rightarrow[0,\infty)$
such that there is some computable probability measure $\mu$ 
such that $\int_{A}g_{n}(x)\,dx\leq\mu(A)$ for all Borel sets $A\subseteq[0,1]$.
\end{enumerate}
Moreover, a set $E$ is a Schnorr null set if and only if $E\subseteq\{x:\lim_{n}g_n(x)=\infty\}$
for some such sequence $(g_{n})$.
	
\end{thm}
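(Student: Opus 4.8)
The plan is to prove Theorem~\ref{thm:CR-monotone} (characterizing Schnorr randomness via monotone convergence) by mirroring the structure of Levin's Theorem~\ref{thm:MLR-monotone} for Martin-L\"of randomness, but replacing Martin-L\"of tests with Schnorr tests throughout. The key extra ingredient is the computability of measures: in the Schnorr setting every open set appearing in the test must have uniformly computable measure, and this is exactly matched by the hypothesis that there is a computable probability measure $\mu$ with $\int_A g_n\,dx \le \mu(A)$ for all Borel $A$. I would prove the theorem by establishing the ``moreover'' clause (the equivalence between Schnorr null sets and sets of the form $\{x : \lim_n g_n(x)=\infty\}$), since the equivalence of (1) and (2) follows immediately: $x$ is Schnorr random iff $x$ lies in no Schnorr null set iff, by the moreover clause, $\sup_n g_n(x) < \infty$ for every admissible sequence $(g_n)$.

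First I would prove the forward direction of the moreover clause: given such a sequence $(g_n)$ with dominating measure $\mu$, show that $\{x : \lim_n g_n(x) = \infty\}$ is a Schnorr null set. The natural candidate test is $U_k = \{x : \sup_n g_n(x) > 2^k\}$, which is effectively open since the $g_n$ are a computable increasing sequence of continuous functions (so $\sup_n g_n$ is lower semicontinuous and effectively so). A Markov-type inequality gives $\mu\text{-estimate}$: since $\int_A g_n \le \mu(A) \le 1$, we get $\lambda(\{x : g_n(x) > 2^k\}) \le 2^{-k}$ where $\lambda$ is Lebesgue measure, hence $\lambda(U_k) \le 2^{-k}$. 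The real work is verifying that $\lambda(U_k)$ is \emph{uniformly computable}, which is what upgrades this from a Martin-L\"of test to a Schnorr test; here one uses that both the $g_n$ and the dominating measure $\mu$ are computable, so the measures of the superlevel sets can be approximated from both sides. I expect this computability-of-measure verification to be the main obstacle, as it requires carefully exploiting the dominating inequality $\int_A g_n \le \mu(A)$ to control $\lambda(U_k)$ from above and using monotone convergence to control it from below.

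For the converse, I would start from an arbitrary Schnorr test $(V_k)$ with $\lambda(V_k) \le 2^{-k}$ and $\lambda(V_k)$ uniformly computable, and construct an admissible sequence $(g_n)$ whose divergence set contains $\bigcap_k V_k$. The standard device is to build $g_n$ as a sum $\sum_{k} h_{k,n}$ of continuous approximations to the indicator functions $\mathbf{1}_{V_k}$, arranged so that $g_n(x) \to \infty$ precisely when $x$ lies in infinitely many $V_k$, equivalently in $\bigcap_k V_k$ after passing to a rapidly shrinking subsequence. The dominating measure $\mu$ is then defined by $\mu(A) = \sum_k c_k \lambda(V_k \cap A)$ for suitable summable constants $c_k$; the uniform computability of $\lambda(V_k)$ is exactly what guarantees $\mu$ is a computable probability measure, and the inequality $\int_A g_n \le \mu(A)$ holds by construction. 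The delicate point is choosing the continuous approximations $h_{k,n}$ and the constants $c_k$ simultaneously so that the integrals stay controlled by $\mu$ \emph{while} $g_n$ still diverges on the target set; I would handle this by approximating each $V_k$ from inside by effectively closed sets of nearly full measure (possible by computability of $\lambda(V_k)$) and using Urysohn-type continuous functions supported near $V_k$.

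The guiding principle throughout is that Levin's proof of Theorem~\ref{thm:MLR-monotone} already supplies the combinatorial skeleton, and Schnorr randomness differs from Martin-L\"of randomness \emph{only} in demanding computable (rather than merely upper-semicomputable) measures. Thus every place where Levin's argument produces an effectively open set, I must instead produce one of computable measure, and every place it invokes a dominating upper-semicomputable functional, I must supply the genuine computable dominating measure $\mu$ provided in hypothesis (2). Once the correspondence between admissible sequences $(g_n)$ and Schnorr tests is set up carefully, the equivalence of (1) and (2) is a formal consequence, and the bulk of the technical effort lies in the two computability verifications described above.
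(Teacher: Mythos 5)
The paper offers no proof of this theorem (it only cites Rute), so your proposal must stand on its own, and it has a fatal gap at precisely the step you flag as ``the main obstacle.'' From the hypothesis $\int_A g_n\,dx\le\mu(A)$, Markov's inequality applied on $U_k\cap A$ together with monotone convergence gives $\lambda(U_k\cap A)\le 2^{-k}\mu(U_k\cap A)$ for every Borel $A$, where $U_k=\{x:\sup_n g_n(x)>2^k\}$ and $\lambda$ is Lebesgue measure. That makes $(U_k)$ a \emph{bounded Martin-L\"of test} in the sense of Subsection~\ref{subsec:Other-randomness-notions}, so the divergence set is a \emph{computably} null set. It does not make $\lambda(U_k)$ computable, and no amount of care with the domination inequality will: the inequality only bounds $\lambda(U_k\cap A)$ by $\mu(U_k\cap A)$, which is itself merely lower semicomputable, so you never obtain upper approximations to $\lambda(U_k)$. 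Worse, the claim you are trying to prove is false as literally stated. Take a Schnorr random $x_0$ that is not computably random; then $x_0\in\bigcap_k V_k$ for some bounded Martin-L\"of test $(V_k)$ with dominating computable measure $\nu$, and your own converse construction applied to $(V_k)$ (with $\nu$ as the dominating measure) produces an admissible increasing sequence $(g_n)$ with $\lim_n g_n(x_0)=\infty$. So such divergence sets need not be Schnorr null, and (1) does not imply (2) if (1) reads ``Schnorr random.'' Indeed, had your forward direction succeeded, it would, combined with your converse, prove that Schnorr randomness coincides with computable randomness, which is false.

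The resolution is that the theorem --- as its label \ref{thm:CR-monotone} and its attribution to Rute's work on computable randomness indicate --- characterizes \emph{computable} randomness; ``Schnorr random'' and ``Schnorr null set'' in the printed statement should read ``computably random'' and ``computably null set.'' With that correction your skeleton is right and both directions close: the forward direction is exactly the bounded-test computation above (computability of $\lambda(U_k)$ is neither needed nor available), and in the converse you should dominate $g_n=\sum_{k\le n}c_k2^k h_{k,n}$ (with $h_{k,n}\nearrow\mathbf{1}_{V_k}$ continuous) by the test's own measure $\nu$, using $2^k\lambda(V_k\cap A)\le\nu(V_k\cap A)\le\nu(A)$, rather than by a measure manufactured from the numbers $\lambda(V_k)$, whose computability you do not have for a bounded Martin-L\"of test. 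Your converse as written, which does rely on computable $\lambda(V_k)$, correctly shows that every Schnorr null set is contained in such a divergence set --- but that is only the easy half of the corrected statement, and it is the analogue of Theorem~\ref{thm:SR-monotone} (Miyabe), not of this one, where the genuinely Schnorr-specific hypothesis is the computability of $\sup_n\int_0^1 g_n\,dx$.
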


\begin{thm}[Miyabe \cite{Miyabe:2013uq}]
\label{thm:SR-monotone}The following are equivalent for a real $x\in[0,1]$.

\begin{enumerate}
\item The real $x$ is Schnorr random.
\item The supremum $\sup_{n}g_{n}(x)$ is finite for every increasing computable
sequence of continuous functions $g_{n}\colon[0,1]\rightarrow[0,\infty)$
such that $\sup_{n}\int_{0}^{1}g_{n}(x)\,dx$ is finite and computable.
\end{enumerate}
Moreover, a set $E$ is a Schnorr null set if and only if $E\subseteq\{x:\lim_{n}g_n(x)=\infty\}$
for some such sequence $(g_{n})$.

\end{thm}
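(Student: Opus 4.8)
The plan is to prove the ``moreover'' statement, from which the equivalence of (1) and (2) is immediate. Call a sequence $(g_n)$ \emph{admissible} if it is increasing, computable, consists of continuous functions into $[0,\infty)$, and has $\sup_n\int_0^1 g_n\,d\mu$ finite and computable. Since each $g_n$ is nondecreasing we have $\sup_n g_n(x)=\lim_n g_n(x)$, so writing $G=\sup_n g_n\colon[0,1]\to[0,\infty]$ and recalling that $x$ is Schnorr random exactly when it lies in no Schnorr null set, statement (2) says precisely that $x\notin\{G=\infty\}$ for every admissible sequence, i.e.\ that $x$ avoids every set of the claimed form. Throughout set $M=\int_0^1 G\,d\mu$; by monotone convergence $M=\sup_n\int_0^1 g_n\,d\mu$, which by admissibility is finite and computable. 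The feature to exploit is that computability of $M$ (not merely its finiteness) gives a \emph{computable modulus} for the convergence $\int_0^1 g_n\,d\mu\uparrow M$: one computes $\int_0^1 g_n\,d\mu$ and $M$ and waits until $M-\int_0^1 g_n\,d\mu<\varepsilon$. This is exactly the ingredient absent in the Martin-L\"of case (Theorem~\ref{thm:MLR-monotone}), where $M$ is only lower semicomputable.

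For the forward direction, suppose $E$ is a Schnorr null set, witnessed by a Schnorr test $(U_n)$ with $\mu(U_n)\le 2^{-n}$, $\mu(U_n)$ computable uniformly in $n$, and $E\subseteq\bigcap_n U_n$. The natural candidate is $G=\sum_n\mathbf{1}_{U_n}$: if $x\in\bigcap_n U_n$ then $x$ lies in every $U_n$, so $G(x)=\infty$, giving $E\subseteq\{G=\infty\}$; and $\int_0^1 G\,d\mu=\sum_n\mu(U_n)\le 2$ is finite and, since its tail is bounded by $2^{-n+1}$, computable. To make the approximants continuous, I would replace each $\mathbf{1}_{U_n}$ by an increasing computable sequence of continuous functions $0\le\phi_{n,m}\le\mathbf{1}_{U_n}$ supported in $U_n$ with $\sup_m\phi_{n,m}=\mathbf{1}_{U_n}$ pointwise (obtained by ramping up linearly on the finitely many rational intervals making up $U_n$ at distance $\ge 1/m$ from its complement), and set $g_M=\sum_{n\le M}\phi_{n,M}$. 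Then $(g_M)$ is an increasing computable sequence of continuous functions with $\sup_M g_M=G$ and $\sup_M\int_0^1 g_M\,d\mu=\sum_n\mu(U_n)$ finite and computable, as required.

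The reverse direction is where the real work lies. Given an admissible sequence with $E\subseteq\{G=\infty\}$, I would cover $E$ by the level sets $W_k=\{G>c_k\}=\bigcup_n\{g_n>c_k\}$ for thresholds $c_k\uparrow\infty$. Each $W_k$ is effectively open, $\{G=\infty\}=\bigcap_k W_k\supseteq E$, and Markov's inequality gives $\mu(W_k)\le M/c_k$, so choosing $c_k$ of order $2^k M$ forces $\mu(W_k)\le 2^{-k}$. The missing ingredient---and the main obstacle---is that $(W_k)$ must be a \emph{Schnorr} test, i.e.\ the measures $\mu(W_k)$ must be computable; a priori $\mu(\{G>c\})$ is only lower semicomputable, since $G$ is merely lower semicontinuous and $\{G\ge c\}$ need not be closed. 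Here computability of $M$ is decisive: the function
\[
\mathcal{F}(c)=\int_0^1\min(G,c)\,d\mu=\int_0^c\mu(\{G>s\})\,ds
\]
is computable uniformly in $c$, because $\int_0^1\min(g_n,c)\,d\mu\uparrow\mathcal{F}(c)$ with error at most $M-\int_0^1 g_n\,d\mu$ (using that $t\mapsto\min(t,c)$ is nondecreasing and $1$-Lipschitz), and this error tends to $0$ with the computable modulus above. Since $\mathcal{F}$ is computable, concave, and $1$-Lipschitz, its derivative $F(c)=\mu(\{G>c\})$ is sandwiched between the computable left and right average slopes of $\mathcal{F}$, and these agree exactly at the co-countably many continuity points of the monotone function $F$. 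I would therefore search effectively for dyadic thresholds $c_k$ at which this sandwich closes to within the desired precision and simultaneously $F(c_k)\le 2^{-k}$---such $c_k$ exist and are findable because continuity points are dense and $F(c)\to 0$---yielding a genuine Schnorr test $(W_k)$ with $\mu(W_k)=F(c_k)$ computable and $E\subseteq\bigcap_k W_k$. The crux, and the step I expect to demand the most care, is precisely this extraction of computable-measure level sets from the computable integral $\mathcal{F}$.
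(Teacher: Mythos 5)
The paper itself gives no proof of this statement; it is quoted from Miyabe, so I can only assess your argument on its own terms. Your overall architecture is the standard one and is sound: the forward direction (Schnorr test $\Rightarrow$ admissible sequence via $G=\sum_n\mathbf{1}_{U_n}$ with continuous ramps, computable integral because the tails are bounded by $2^{-n+1}$) is correct, the reduction of (1)$\Leftrightarrow$(2) to the ``moreover'' clause is correct, and your identification of $\mathcal{F}(c)=\int_0^1\min(G,c)\,d\mu$ as a computable concave function whose one-sided difference quotients sandwich $F(c)=\mu(\{G>c\})$ is exactly the right tool; the uniform computability of $\mathcal{F}$ from the computable modulus for $\int g_n\,d\mu\uparrow M$ is the correct use of the hypothesis that $M$ is computable rather than merely lower semicomputable.

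The one genuine gap is the final threshold-selection step, which you yourself flag as the crux. As written --- ``search effectively for dyadic thresholds $c_k$ at which this sandwich closes to within the desired precision'' --- the procedure does not work, for two reasons. First, for $(W_k)$ to be a Schnorr test you need $\mu(W_k)$ to be a computable real, so the sandwich must close to within $2^{-j}$ for \emph{every} $j$ at the single point $c_k$; that is a $\Pi^0_2$ property of $c_k$ which no finite search certifies. Second, the discontinuity points of $F$ are the atoms of the distribution of $G$, and nothing prevents this countable set from containing every dyadic rational in the relevant range, so there may be no admissible dyadic threshold at all. The fix is routine but must be said: run a nested-interval search. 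Start in $[a,a+1]$ with $a>2^kM$ so that $F\le 2^{-k}$ there; at stage $j$, subdivide the current interval into $N\gg 2^{j}$ pieces $[t_i,t_{i+1}]$ and compute the average slopes $s_i=\bigl(\mathcal{F}(t_{i+1})-\mathcal{F}(t_i)\bigr)/(t_{i+1}-t_i)$, which are nonincreasing in $i$ and satisfy $s_{i+1}\le F(t_{i+1})\le F(t_i)\le s_{i-1}$; since $\sum_i(s_{i-1}-s_{i+1})$ telescopes to at most twice the current oscillation bound, pigeonhole lets you effectively locate a subinterval on which the oscillation of $F$ is at most $2^{-j-1}$, certified by computable slope bounds. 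The intersection point $c_k$ is then a computable (generally irrational) real, uniformly in $k$, with $\mu(\{G>c_k\})$ computable uniformly in $k$ and at most $2^{-k}$, which completes your argument. With that replacement the proof goes through.
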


\subsection{Differentiability\label{subsec:Differentiability}}

A theorem of Lebesgue states that every function of bounded variation
is differentiable almost everywhere. A function $f\colon[a,b]\to\mathbb{R}$ is of \emph{bounded variation}
if there is a bound $c$ such that for all $a\leq x_{0}<\ldots<x_{n}\leq b$,
one has $\sum_{i=0}^{n-1}|f(x_{i})-f(x_{i+1})|\leq c$. The minimum
such bound is the variation $\Var_a^b(f)$.
\begin{thm}[{($\Rightarrow$) Demuth \cite{Demuth:1975zr}, ($\Leftarrow$) Brattka,
Miller, Nies \cite{BrattkaMillerNies16}}]
\label{thm:MLR-bdd-var}The following are equivalent for a real $x\in[0,1]$.

\begin{enumerate}
\item The real $x$ is Martin-Löf random.
\item The function $f$ is differentiable at $x$ for every computable function
$f\colon[0,1]\rightarrow\mathbb{R}$ of bounded variation.
\end{enumerate}
\end{thm}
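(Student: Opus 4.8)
The plan is to treat the two implications separately, matching the attribution. For the forward direction $(1)\Rightarrow(2)$ (Demuth), fix a Martin-L\"of random $x$ and a computable $f$ of bounded variation, and aim to show $f$ is differentiable at $x$. First I would reduce to monotone functions via the Jordan decomposition $f=V-(V-f)$, where $V(t)=\Var_0^t(f)$. The essential point --- and the reason \emph{bounded variation}, rather than \emph{monotone}, is the correct hypothesis --- is that $V$ is only lower semicomputable: it is the supremum over rational partitions of the computable quantities $\sum_i|f(x_{i+1})-f(x_i)|$, so its total variation need not be computable. Both $V$ and $V-f$ are nondecreasing, and for each the increment over a rational interval, namely $\Var_a^b(f)$ and $\Var_a^b(f)-(f(b)-f(a))$, is lower semicomputable uniformly in $a,b$. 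Thus it suffices to prove the key lemma: a Martin-L\"of random $x$ is a point of differentiability of every nondecreasing function whose increments over rational intervals are uniformly lower semicomputable.

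To prove this lemma I would effectivize the Riesz rising-sun argument. For rationals $p<q$ the classical non-differentiability set $\{x:\underline{D}V(x)<p<q<\overline{D}V(x)\}$ is Lebesgue null, and an interval witnessing a steep rise, $V(b)-V(a)>q(b-a)$, can be enumerated from the lower approximations to the increments, so the set where $\overline{D}V>q$ is covered by an effectively open set whose measure the rising-sun estimate bounds by $\Var(f)/q$. Iterating, and using the condition $\underline{D}V<p$ on the slowly rising portions to contract the covered measure by a factor $p/q<1$ at each pass, produces effectively open covers of geometrically decreasing measure. Non-uniformly fixing a single rational constant $c\ge\Var(f)$ (which exists although it is not computable), one runs enough iterations at stage $n$ to force the measure below $2^{-n}$, yielding a genuine Martin-L\"of test; a parallel, simpler test built from the sets $\{\overline{D}V>q\}$ for $q\to\infty$ covers $\{\overline{D}V=\infty\}$. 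Since $x$ is Martin-L\"of random it avoids all of these countably many tests, so $\underline{D}V(x)=\overline{D}V(x)$ is finite; applying this to $V$ and to $V-f$ and subtracting gives $(2)$. I expect the effective measure bookkeeping here --- getting honest Martin-L\"of tests despite the non-computable variation --- to be the main technical point of this direction.

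For the reverse direction $(2)\Rightarrow(1)$ (Brattka--Miller--Nies) I would argue by contraposition: assuming $x$ is not Martin-L\"of random, I must construct a \emph{computable} function of bounded variation that fails to be differentiable at $x$. By the ``moreover'' clause of Theorem~\ref{thm:MLR-monotone} there is an increasing computable sequence of nonnegative continuous functions $g_n$ with $\sup_n\int_0^1 g_n$ finite and $\lim_n g_n(x)=\infty$. The naive candidate $t\mapsto\int_0^t\sup_n g_n$ is nondecreasing, of bounded variation, and has infinite lower derivative at $x$; but in general it is only lower semicomputable, since its total variation $\sup_n\int_0^1 g_n$ is a non-computable left-c.e.\ real. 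This is the crux of the matter: a \emph{monotone} computable function has computable variation $f(1)-f(0)$ and so can only detect failures of computable randomness, whereas to capture the weaker notion of Martin-L\"of randomness one must hide the non-computable variation inside \emph{cancelling} oscillations.

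Concretely, I would replace the monotone increase by a staged sum of localized tent-shaped bumps placed, at each level $n$, over the component of the witnessing open set carrying the relevant mass, arranging each bump to return to baseline so that the net displacement stays computably controlled while the accumulated up-and-down variation tracks the non-computable quantity $\sup_n\int_0^1 g_n$. Because the bumps near $x$ occur at all scales with slopes tending to infinity, the difference quotients of $f$ at $x$ oscillate without limit (or the upper derivative is infinite), so $f$ is not differentiable at $x$; because each bump is localized and small in height, the partial sums converge uniformly and $f$ is computable, yet its variation remains non-computable. I expect this reconciliation --- exhibiting one honestly computable function whose total variation is simultaneously finite (so that it is of bounded variation) and non-computable (so that it probes Martin-L\"of rather than merely computable randomness), while still forcing non-differentiability precisely at $x$ --- to be the main obstacle, and it is exactly the delicate combinatorial construction carried out by Brattka, Miller, and Nies.
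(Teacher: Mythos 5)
The survey itself does not prove Theorem~\ref{thm:MLR-bdd-var}; it cites Demuth for ($\Rightarrow$) and Brattka--Miller--Nies for ($\Leftarrow$), and the only adjacent material is the proof of Theorem~\ref{thm:CR-bdd-var} (Jordan decomposition when $\Var_0^1(f)$ is computable) and the remark in Subsection~\ref{subsec:Connections-with-constructive} that Bishop's constructive upcrossing inequality for functions of bounded variation gives an alternate proof of the forward direction. Your overall architecture matches the cited proofs: the decomposition $f=V-(V-f)$ with $V$ only lower semicomputable is exactly why bounded variation probes Martin-L\"of rather than computable randomness (compare the paper's proof of Theorem~\ref{thm:CR-bdd-var}), and your reverse direction --- converting a witness of non-randomness from Theorem~\ref{thm:MLR-monotone} into a computable function whose cancelling oscillations hide a non-computable variation --- has the right shape for the Brattka--Miller--Nies construction, though all of its real content is deferred to ``delicate combinatorics.''

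There is, however, a concrete gap in your proof of the key lemma in the forward direction. In the Riesz rising-sun iteration for $E_{p,q}=\{x:\underline{D}V(x)<p<q<\overline{D}V(x)\}$, each pass alternates between \emph{fast-rise} intervals ($\Var_a^b(f)>q(b-a)$) and \emph{slow-rise} intervals ($\Var_a^b(f)<p(b-a)$); the geometric contraction by $p/q$ comes precisely from nesting fast intervals inside slow ones. Since $\Var_a^b(f)$ is only lower semicomputable, the fast-rise condition is c.e.\ but the slow-rise condition is $\Pi^0_1$, so the slow layers cannot be enumerated and the iterated covers are not effectively open: you cannot list the ``fast intervals sitting inside some slow interval sitting inside some fast interval, \dots,'' and dropping the slow layers destroys the contraction (inside a fast interval essentially every point can again lie in a fast subinterval). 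This is exactly where the known arguments do something different, using quantities that are c.e.\ on the correct side: Bishop's and Demuth's route counts $(p,q)$-upcrossings of the difference quotients of the \emph{computable} function $f$ itself --- ``at least $k$ upcrossings'' is then a c.e.\ event, and the variation enters only as the $O(\Var_0^1(f)/k)$ measure bound --- while another route runs through convergence at Martin-L\"of randoms of the left-c.e.\ martingale of normalized variations over dyadic intervals (in the spirit of Theorem~\ref{thm:MLR-mart-conv}). Your key lemma, that nondecreasing functions with uniformly left-c.e.\ rational increments are differentiable at every Martin-L\"of random, is true, but it needs one of these mechanisms rather than a literal effectivization of Riesz's argument. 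The parts of your sketch that use only the fast-rise condition --- covering $\{\overline{D}V=\infty\}$ and the non-uniform choice of a rational bound $c\ge\Var_0^1(f)$ --- are fine as stated.
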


\begin{thm}
\label{thm:CR-bdd-var}The following are equivalent for a real $x\in[0,1]$.

\begin{enumerate}
\item The real $x$ is computably random.
\item The function $f$ is differentiable at $x$ for every computable function
$f\colon[0,1]\rightarrow\mathbb{R}$ of bounded variation with a computable
variation $\Var_0^1(f)$.
\end{enumerate}
\begin{proof}
Brattka, Miller, and Nies \cite[Cor.~4.3]{BrattkaMillerNies16} proved this theorem for
nondecreasing $f$.  Therefore it is sufficient to find two nondecreasing computable functions $f^+$ and $f^-$ such that $f = f^+ - f^-$.   Let $f^+ = \Var_0^x(f)$ and $f^- = \Var_0^x(f) - f$.  Both are non-decreasing.  Since $\Var_0^1(f)$ is computable, so is $\Var_0^x(f)$.  (Indeed, $\Var_0^x(f)$ is both computable from below, and computable from above by the calculation $\Var_0^x(f) = \Var_0^1(f) - \Var_x^1(f).$)
\end{proof}

\end{thm}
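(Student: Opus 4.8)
The plan is to reduce the general bounded variation case to the monotone case, where Brattka, Miller, and Nies \cite[Cor.~4.3]{BrattkaMillerNies16} have already shown that $x$ is computably random if and only if every nondecreasing computable function is differentiable at $x$. Given this, the implication (2)$\Rightarrow$(1) is immediate: a nondecreasing computable function $f$ has bounded variation with $\Var_0^1(f)=f(1)-f(0)$, which is computable, so if every computable bounded variation function with computable total variation is differentiable at $x$, then in particular every nondecreasing computable function is, and hence $x$ is computably random.

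For (1)$\Rightarrow$(2) I would use the Jordan decomposition. Given a computable $f$ of bounded variation with computable total variation, set $V(x)=\Var_0^x(f)$ and write $f=V-(V-f)$. Both summands are nondecreasing: $V$ is nondecreasing by definition, and for $a\le b$ one has $V(b)-V(a)=\Var_a^b(f)\ge f(b)-f(a)$, so $V-f$ is nondecreasing as well. Once I know both are computable, the Brattka--Miller--Nies theorem applied to each summand gives that $V$ and $V-f$ are differentiable at every computably random $x$, and therefore so is their difference $f$.

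The crux is verifying that the variation function $V$ is computable; computability of $V-f$ then follows since $f$ is, and continuity of both comes for free. I would show $V$ is computable from below by approximating $\Var_0^x(f)$ from below by the finite sums $\sum_i|f(x_{i+1})-f(x_i)|$ over rational partitions of $[0,x]$, and computable from above via the identity $\Var_0^x(f)=\Var_0^1(f)-\Var_x^1(f)$: the hypothesis supplies $\Var_0^1(f)$ as a computable real, while $\Var_x^1(f)$ is computable from below by the same partition argument on $[x,1]$, so the difference is computable from above. Being simultaneously computable from below and above, $V$ is a computable function. The main obstacle is carrying out this two-sided approximation effectively and uniformly in $x$, which is precisely what the assumed computability of the total variation $\Var_0^1(f)$ unlocks.
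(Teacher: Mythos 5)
Your proposal is correct and follows essentially the same route as the paper: the Jordan decomposition $f = \Var_0^x(f) - (\Var_0^x(f) - f)$ into nondecreasing parts, with computability of the variation function established by combining lower approximation via partition sums with upper approximation via $\Var_0^x(f) = \Var_0^1(f) - \Var_x^1(f)$, and then an appeal to the Brattka--Miller--Nies result for monotone functions. The only difference is that you spell out the (2)$\Rightarrow$(1) direction explicitly, which the paper leaves implicit.
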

In this next result, a function $f \colon [0,1] \to \mathbb{R}$ 
is \emph{effectively integrable} if there
is a computable sequence of rational polynomials $p_{n}$ such that
$\|f-p_{n}\|_{L^{1}}=\int(f-p_{n})\,d\mu\leq2^{-n}$.\footnote{
In Section~\ref{sec:Foundations} we provide three different 
definitions of ``effectively integrable function''. This is the point-free version.  
Many authors refer to these as \emph{$L^1$-computable functions}.}
\begin{thm}[Rute {{\cite[Cor.~4.17, p.~48, Cor.~12.5, p.~67]{Rute:2013pd}}}]
\label{thm:SR-bdd-var}The following are equivalent for a real $x\in[0,1]$.

\begin{enumerate}
\item The real $x$ is Schnorr random.
\item The function $f$ is differentiable at $x$ for every computable function
$f\colon[0,1]\rightarrow\mathbb{R}$ of bounded variation with
effectively integrable derivative $f'$.
\end{enumerate}
\end{thm}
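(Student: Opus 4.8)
The plan is to prove the equivalence by reducing Theorem~\ref{thm:SR-bdd-var} to the monotone convergence characterization of Schnorr randomness (Theorem~\ref{thm:CR-monotone} or~\ref{thm:SR-monotone}), exploiting the fundamental theorem of calculus to convert differentiability of $f$ into a statement about the convergence of averages of its derivative. The key structural fact is that a function $f$ of bounded variation with effectively integrable derivative $f'$ is (essentially) an indefinite integral of $f'$, so that $f(x) = f(0) + \int_0^x f'(t)\,dt$ up to the usual subtleties. Differentiability of $f$ at $x$ then becomes the statement that the averages $\frac{1}{h}\int_x^{x+h} f'(t)\,dt$ converge to a limit as $h \to 0$, which is precisely a Lebesgue-differentiation-type condition on the effectively integrable function $f'$.

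First I would handle the direction ($1 \Rightarrow 2$): assuming $x$ is Schnorr random, I must show $f$ is differentiable at $x$ for every such $f$. The idea is that Schnorr randomness is known to be characterized by a.e.~convergence theorems for effectively integrable functions; in particular, I expect a Lebesgue differentiation theorem to hold at every Schnorr random for every effectively integrable $g$, giving $\lim_{h\to 0}\frac{1}{h}\int_x^{x+h} g\,d\mu = g(x)$ (or at least convergence of the averages). Applying this to $g = f'$ yields differentiability of $f$ at $x$. The technical work here is to verify that ``bounded variation with effectively integrable derivative'' genuinely forces the representation $f = \int f' + \textrm{const}$ in an effective sense, so that the point-free data (the rational polynomials $p_n$ approximating $f'$ in $L^1$) can be assembled into the covering/convergence machinery underlying Schnorr randomness.

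For the converse ($2 \Rightarrow 1$), I would argue contrapositively: if $x$ is \emph{not} Schnorr random, then $x$ lies in some Schnorr null set, and I must manufacture a single computable function $f$ of bounded variation with effectively integrable derivative that fails to be differentiable at $x$. By Theorem~\ref{thm:SR-monotone}, non-randomness gives an increasing computable sequence $g_n$ of continuous nonnegative functions with $\sup_n \int_0^1 g_n\,dx$ finite and computable but $\sup_n g_n(x) = \infty$. The plan is to integrate this data: set $f$ to be (a suitably normalized version of) $\int_0^x \big(\sup_n g_n\big)$, or more carefully build $f$ as a limit of the indefinite integrals $\int_0^x g_n$, which are monotone, uniformly Lipschitz-in-$L^1$, and of uniformly bounded variation precisely because $\sup_n\int g_n$ is finite. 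The blow-up $\sup_n g_n(x) = \infty$ should then translate into $f$ having an infinite upper derivate at $x$, hence non-differentiability there.

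The hard part will be the converse, specifically controlling the effectivity of $f'$ and ensuring $f$ is genuinely computable (not merely effectively integrable) while still of bounded variation. Integrating a monotone sequence of $L^1$ functions whose integrals converge computably should give an $f$ whose variation equals $\sup_n\int_0^1 g_n$, which is computable by Theorem~\ref{thm:SR-monotone}; but I must check that the candidate derivative $f'$ (morally $\sup_n g_n$, which is unbounded and only defined a.e.) admits a sequence of rational polynomials converging to it in $L^1$ effectively, i.e.~that $f'$ is effectively integrable in the precise sense defined before the theorem. This is where the finiteness and computability of $\sup_n\int g_n$ is essential: it bounds the $L^1$-norm of the limit and lets the polynomial approximants $p_n$ to the $g_n$ be reused as approximants to $f'$. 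Matching the Schnorr-null-set covering from Theorem~\ref{thm:SR-monotone} against the precise regularity demanded of $f'$ — so that the constructed $f$ lands in exactly the class quantified over in item~(2) — is the delicate point I would spend the most care on.
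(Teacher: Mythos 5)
The paper itself gives no proof of this theorem (it is cited from Rute's thesis), so I evaluate your proposal on its own terms. Your converse direction is sound and is essentially the right argument: from Theorem~\ref{thm:SR-monotone} you get an increasing computable sequence $(g_n)$ of continuous nonnegative functions with $\sup_n\int_0^1 g_n$ finite and computable and $\lim_n g_n(x)=\infty$; the computability of $\sup_n\int_0^1 g_n$ gives a computable rate for $\|\sup_m g_m-g_n\|_{L^1}\to 0$, so $f(t)=\int_0^t\sup_m g_m$ is a computable nondecreasing function whose a.e.\ derivative $\sup_m g_m$ is effectively integrable, and $\liminf_{h\to 0^+}\frac{f(x+h)-f(x)}{h}\geq g_n(x)$ for every $n$ forces an infinite derivate at $x$.

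The forward direction, however, has a genuine gap. The step you defer as ``technical work'' --- verifying that bounded variation plus an effectively integrable derivative ``forces the representation $f=\int f'+\mathrm{const}$'' --- is simply false. The Cantor function is computable, of bounded variation, and has derivative $0$ Lebesgue-a.e., so its derivative is effectively integrable (take $p_n=0$); yet it is not the indefinite integral of its derivative. In general $f$ decomposes as $f(0)+\int_0^{\cdot}f'+f_s$ with a computable, continuous, BV singular part $f_s$ whose derivative vanishes a.e. Your reduction to the Lebesgue differentiation theorem handles only the absolutely continuous part (and even there you need the non-symmetric version of Theorem~\ref{thm:SR-LDT-2}, with identified limit, to obtain two-sided differentiability rather than mere convergence of symmetric averages). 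The real content of the theorem beyond Theorem~\ref{thm:SR-LDT} is precisely the claim that the singular part of such an $f$ is differentiable with derivative $0$ at every Schnorr random; this does not follow from the differentiation theorem, and it is delicate because the total variation of $f_s$ is only lower semicomputable, so one cannot directly build a Schnorr test covering $\{x:\overline{D}f_s(x)>\varepsilon\}$ by the usual Vitali-covering bound. (For the Cantor function it happens to work because the Cantor set is an effectively closed null set, hence Schnorr null; no such crutch is available in general.) Without a separate argument for $f_s$, your forward direction does not go through.
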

Now, let us consider Rademacher's theorem that says that every Lipchitz
function is almost everywhere differentiable. Recall, a function
$f\colon[0,1]\rightarrow\mathbb{R}$ is \emph{Lipschitz} if there is a constant
$C > 0$ such that for all $x,y\in[0,1]$, $|f(x)-f(y)|\leq C|x-y|$.
\begin{thm}[Freer, Kjos-Hannsen, Nies, Stephan \cite{Freer.Kjos-Hanssen.Nies.ea:2014}]
\label{thm:CR-lipschitz}The following are equivalent for a real
$x\in[0,1]$.

\begin{enumerate}
\item The real $x$ is computably random.
\item Every computable Lipschitz function $f\colon[0,1]\rightarrow\mathbb{R}$
is differentiable at $x$.
\end{enumerate}
\end{thm}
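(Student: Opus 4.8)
The plan is to route the statement through the martingale characterization of computable randomness and the classical correspondence between Lipschitz functions and bounded dyadic martingales. Recall that $x$ is computably random iff no computable martingale $d\colon\{0,1\}^{<\mathbb{N}}\to[0,\infty)$ (satisfying $d(\sigma)=\tfrac12(d(\sigma 0)+d(\sigma 1))$) has $\limsup_n d(x\upharpoonright n)=\infty$. I would first note that the bounded-Martin-L\"of-test definition given above is equivalent to this standard one, so I am free to argue with martingales.

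To each $f\colon[0,1]\to\mathbb{R}$ associate its \emph{slope process} on the dyadic grid,
\[
M_f(\sigma)=2^{|\sigma|}\bigl(f(r_\sigma)-f(\ell_\sigma)\bigr),
\]
where $\ell_\sigma,r_\sigma$ are the endpoints of the dyadic interval $I_\sigma$ coded by $\sigma$. A short telescoping computation shows $M_f$ satisfies the averaging identity, so $M_f$ is a (signed) martingale; if $f$ is Lipschitz with constant $C$ then $|M_f|\le C$, and if $f$ is computable then $M_f$ is computable. Conversely, every computable signed martingale bounded by $C$ arises as $M_f$ for a computable Lipschitz $f$ obtained by integrating the piecewise-constant slopes. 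The theorem thus reduces to the claim that $x$ is computably random iff every computable bounded martingale converges along $x$, and that this dyadic convergence is equivalent to genuine differentiability of the associated $f$ at $x$. I would also remark that one cannot simply cite Theorem~\ref{thm:CR-bdd-var}, since a computable Lipschitz function need not have computable variation.

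For (1)$\Rightarrow$(2), assuming $x$ is computably random I would show every computable bounded martingale $M$ converges along $x$ by the standard \emph{upcrossing} argument: if $M(x\upharpoonright n)$ crossed below some rational $q$ and above some rational $r>q$ infinitely often, then the gambling strategy that begins tracking $M$ each time it drops to $q$ and banks the gain at each completed upcrossing defines a computable nonnegative martingale succeeding on $x$, contradicting computable randomness; ranging over all pairs $q<r$ yields convergence. The genuinely hard step is upgrading this \emph{dyadic} differentiability to ordinary two-sided differentiability, since $M_f$ only records slopes over dyadic intervals containing $x$ in their interior, not the difference quotients $\tfrac{f(y)-f(x)}{y-x}$. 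I would handle this by feeding the argument a richer family of computable Lipschitz functions (shifts and rescalings of $f$, so that shifted dyadic grids are covered), so that convergence of all their slope martingales along $x$ pins both one-sided dyadic slopes to a common value and squeezes the ordinary difference quotients; this is where computable randomness of $x$ is used in full force.

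For the converse, arguing contrapositively, if $x$ is not computably random fix a computable martingale $d$ with $\limsup_n d(x\upharpoonright n)=\infty$. From $d$ I would manufacture a \emph{bounded} computable martingale $M$ that fails to converge along $x$, the idea being to convert unbounded growth into persistent oscillation by a savings-and-reflection construction that caps the capital and re-bets, so that the infinitely many large gains along $x$ force $M(x\upharpoonright n)$ to oscillate between two fixed values. Integrating $M$ yields a computable Lipschitz $f$ whose dyadic slopes, hence difference quotients, do not converge at $x$, so $f$ is not differentiable at $x$. I expect the two reflection/upgrade steps---converting success into bounded oscillation, and converting dyadic into two-sided differentiability---to be the main obstacles, the remainder being routine bookkeeping with the slope martingale.
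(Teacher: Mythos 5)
The paper itself offers no proof of this theorem; it is a pure citation to Freer, Kjos-Hanssen, Nies, and Stephan, and your overall strategy --- bounded dyadic slope martingales $M_f(\sigma)=2^{|\sigma|}(f(r_\sigma)-f(\ell_\sigma))$ together with the convergence characterization of computable randomness (Theorem~\ref{thm:CR-mart-conv}) --- is indeed the strategy of that source and of the closely related Brattka--Miller--Nies work. The reductions you set up are sound: $M_f$ is a computable martingale bounded by the Lipschitz constant, every bounded computable martingale integrates back to a computable Lipschitz function, and the upcrossing argument correctly shows that computable randomness forces every bounded computable martingale to converge along $x$. You are also right that one cannot simply cite Theorem~\ref{thm:CR-bdd-var}, since the variation of a computable Lipschitz function need not be computable.

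The problem is that the two steps you defer are not routine bookkeeping but the mathematical content of the theorem, and your plan for the first would not go through as stated. A shift of the grid by a dyadic rational changes nothing at fine levels, while a shift by a non-dyadic amount yields a slope process that is a martingale only for the \emph{shifted} dyadic filtration; invoking computable randomness of $x$ for that process amounts to using the base-invariance of computable randomness, itself a difficult theorem of Brattka, Miller, and Nies rather than something to assume in passing. The published route instead controls, in addition to the basic slopes, the slopes over unions of two adjacent level-$n$ dyadic intervals containing $x$: a geometric lemma shows that convergence of all of these to a common value yields $f'(x)$ for Lipschitz $f$ and non-dyadic $x$, and auxiliary computable martingales are built to force those extra slopes to converge. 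For the converse, the lemma you only name --- converting a computable martingale with $\limsup_n d(x\upharpoonright n)=\infty$ into a \emph{bounded} computable martingale that oscillates along $x$ while preserving the martingale identity, nonnegativity, and a uniform bound --- is a genuine construction in the source, not a one-liner. Until these two steps are supplied, what you have is an accurate roadmap of the known proof rather than a proof.
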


Lebesgue's differentiation theorem states that if $f\colon [0,1] \to \mathbb{R}$ is integrable,
then $\frac{1}{2r}\int_{x-r}^{x+r}f(y)\,dy$ converges to
$f(x)$ as $r\rightarrow0$ for almost every $x$.
\begin{thm}[Pathak, Rojas, Simpson \cite{Pathak.Rojas.Simpson:2014}, Rute \cite{Rute:2013pd}]
\label{thm:SR-LDT}The following are equivalent for a real $x\in[0,1]$.

\begin{enumerate}
\item The real $x$ is Schnorr random.
\item The averages $\frac{1}{2r}\int_{x-r}^{x+r}f(y)\,dy$ converge
as $r\rightarrow0$ for every effectively integrable function $f$.
\end{enumerate}
\end{thm}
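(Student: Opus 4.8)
The plan is to prove the two implications separately, using the easier direction to build an explicit counterexample and the harder direction to control the oscillation of the averages via a maximal inequality.

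For $(2)\Rightarrow(1)$ I would argue by contraposition. Suppose $x$ is not Schnorr random, so there is a Schnorr test $(U_n)$ with $\mu(U_n)\le 2^{-n}$, with $\mu(U_n)$ computable uniformly in $n$, and $x\in\bigcap_n U_n$. After replacing $U_n$ by $\bigcap_{k\le n}U_k$ we may assume the $U_n$ are decreasing (a finite intersection of effectively open sets of computable measure again has computable measure, since $\mu(U\cap V)=\mu(U)+\mu(V)-\mu(U\cup V)$ is both lower and upper semicomputable). Set $f=\sum_n \mathbf{1}_{U_n}$. Then $\int_0^1 f\,d\mu=\sum_n\mu(U_n)\le 1$, so $f\in L^1$; and since the partial sums $S_N=\sum_{n\le N}\mathbf{1}_{U_n}$ satisfy $\|f-S_N\|_{L^1}\le 2^{-N}$ while each $S_N$ is a finite sum of indicators of effectively open sets of computable measure (hence effectively $L^1$-approximable by rational polynomials), the function $f$ is effectively integrable. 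Because each $U_n$ is open and $x\in U_n$, there is $\delta_n>0$ with $(x-\delta_n,x+\delta_n)\subseteq U_n$, and shrinking if necessary we may take $\delta_n\to 0$. As the $U_k$ are decreasing, every point of $(x-\delta_n,x+\delta_n)$ lies in $U_1,\dots,U_n$, so $f\ge n$ there and hence $\frac{1}{2\delta_n}\int_{x-\delta_n}^{x+\delta_n}f\,d\mu\ge n$. Thus the averages are unbounded along $\delta_n\to 0$ and fail to converge at $x$, so $(2)$ fails. (A signed variant of $f$ produces genuine oscillation, ruling out convergence in $\overline{\mathbb{R}}$ as well.)

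For $(1)\Rightarrow(2)$ I would reduce convergence to smallness of the oscillation $\Omega f(x)=\limsup_{r\to 0}A_r f(x)-\liminf_{r\to 0}A_r f(x)$, where $A_r f(x)=\frac{1}{2r}\int_{x-r}^{x+r}f\,d\mu$. Fix rational polynomials $p_n$ with $\|f-p_n\|_{L^1}\le 2^{-n}$ (and with $\|f-p_n\|_{L^1}$ itself computable). Since each $p_n$ is continuous, $A_r p_n(x)\to p_n(x)$ for every $x$, so $\Omega f=\Omega(f-p_n)\le 2M(f-p_n)$, where $Mg(x)=\sup_{r>0}A_r|g|(x)$ is the Hardy--Littlewood maximal function. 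The maximal function of an effectively integrable $g$ is lower semicomputable (a monotone limit of the computable continuous functions $\max_{r\in F}A_r|g|$ over finite sets $F$ of rationals), so each superlevel set $\{Mg>\lambda\}$ is effectively open; and the weak $(1,1)$ inequality gives $\mu\{Mg>\lambda\}\le C\lambda^{-1}\|g\|_{L^1}$. Choosing $n(k)$ so that $C\,2^{k+1}\,2^{-n(k)}\le 2^{-k}$ and setting $W_k=\{M(f-p_{n(k)})>2^{-k-1}\}$, we get effectively open sets with $\mu(W_k)\le 2^{-k}$ whose $\limsup$ contains $\{\Omega f>0\}$. Hence $\{\Omega f>0\}$ is null and covered by this effective cover, and any Schnorr random $x$ that avoids it has $\Omega f(x)=0$, giving convergence at $x$.

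The main obstacle is purely effective: the construction above gives only a Martin-L\"of test, because the measure $\mu\{Mg>\lambda\}$ is a priori merely lower semicomputable, not computable, so one recovers the Martin-L\"of-random version of the theorem for free but not the Schnorr-random version. Upgrading the cover to a genuine Schnorr test is where the full strength of effective integrability---computable $L^1$-norms together with effective polynomial approximation---must be used. I would accomplish this either by approximating each $\{Mg>\lambda\}$ from outside by a cover of rational intervals whose total length is controlled by the computable quantity $C\lambda^{-1}\|g\|_{L^1}$ (making the measures computable from above as well), or, more in keeping with the methods already available here, by reducing the boundedness of the maximal averages to the monotone convergence characterization of Schnorr randomness in Theorem~\ref{thm:SR-monotone}, applied to the increasing sequences of continuous functions $\max_{r\in F}A_r|f-p_{n(k)}|$. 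This computability bookkeeping, rather than any measure-theoretic difficulty, is the real content of the Schnorr-random direction.
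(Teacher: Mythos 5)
First, a caveat: the survey does not actually prove Theorem~\ref{thm:SR-LDT}; it cites Pathak--Rojas--Simpson and Rute, so the comparison below is against those proofs. Your $(2)\Rightarrow(1)$ direction is correct and complete, and it is essentially the argument in the literature: nesting the test, noting that finite intersections of effectively open sets of computable measure again have computable measure, and checking that $f=\sum_n\mathbf{1}_{U_n}$ is $L^1$-computable because the partial sums are effectively approximable and the tail is bounded by $\sum_{n>N}\mu(U_n)\le 2^{-N}$. The endpoint cases $x\in\{0,1\}$ and the possibility of convergence to $+\infty$ are minor and you flag the latter.

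The $(1)\Rightarrow(2)$ direction, however, has a genuine gap that you yourself identify but do not close, and the theorem's entire content for Schnorr (as opposed to Martin-L\"of) randomness lives in that gap. Your two proposed repairs do not work as stated. For the second: you cannot apply Theorem~\ref{thm:SR-monotone} to the increasing continuous functions $\max_{r\in F}A_r|f-p_{n(k)}|$, because their integrals increase to $\int M(f-p_{n(k)})\,d\mu$, and the Hardy--Littlewood maximal function of an $L^1$ function is in general not integrable (it is only weak $L^1$); so the hypothesis ``$\sup_n\int g_n\,dx$ finite and computable'' fails outright, and Theorem~\ref{thm:SR-monotone} in any case only yields finiteness of the supremum, not the quantitative bound $M(f-p_{n(k)})(x)\le 2^{-k-1}$ that your oscillation estimate needs. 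For the first: the Vitali covering argument gives a family of intervals whose total length is \emph{bounded by} the computable quantity $C\lambda^{-1}\|g\|_{L^1}$, but a bounded, lower semicomputable, increasing sum of interval lengths need not be a computable real, so you do not get a cover with computable measure this way without an additional stopping argument. (If such an upgrade from ``effectively open with a known measure bound'' to ``effectively open with computable measure'' were available in general, every Martin-L\"of test would convert to a Schnorr test, collapsing the two notions.) The actual proofs in Pathak--Rojas--Simpson and in Rute's thesis exploit the specific structure of the approximating sequence $(p_n)$ -- effective $L^1$-convergence of $A_r f$ to $f$ together with a carefully effectivized maximal/covering inequality in which the exceptional sets are engineered to have computable measure (in Rute's language, one establishes \emph{Brouwer/Schnorr effective almost uniform convergence} of $A_rf$ and then invokes Lemma~\ref{lem:useful-conversions}). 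That bookkeeping is exactly what your write-up defers, so as it stands you have proved the easy direction and only the Martin-L\"of variant of the hard direction.
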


This version of the Lebesgue's differentiation theorem also holds in multiple dimensions.  
In Theorem~\ref{thm:SR-LDT-2} we will address the question,
``To which value does $\frac{1}{2r}\int_{x-r}^{x+r}f(y)\,dy$ converge?''

\subsection{Martingale theory\label{subsec:Martingale-theory}}

In this subsection, we will work in the fair-coin measure on
Cantor space $\{0,1\}^\mathbb{N}$ for convenience.
If $f$ is an integrable function and $\mathcal{F}$ is a $\sigma$-algebra,
then the conditional expectation $\mathbb{E}[f\mid\mathcal{F}]$ is
the unique (up to a.e.\ equivalence) integrable function $g$ such
that $\int_{A}g\,d\mu=\int_{A}f\,d\mu$ for all $A\in\mathcal{F}$.
If $\mathcal{F}$ is the least $\sigma$-algebra for which the functions
$h_{0},\ldots,h_{n-1}$ are $\mathcal{F}$-measurable, then we write
$\mathbb{E}[f\mid h_{0},\ldots,h_{n-1}]=\mathbb{E}[f\mid\mathcal{F}]$.
A martingale is a sequence of integrable functions $(f_{n})_{n\in\mathbb{N}}$
such that for all $n \ge 1$, 
\[
\mathbb{E}[f_{n}\mid f_{0},\ldots,f_{n-1}]=f_{n-1}\quad\mu\text{-a.e.}
\]

Doob's martingale convergence theorem states that if $(f_{n})$ is
a martingale such that $\sup_{n}\|f_{n}\|_{L^{1}}<\infty$, then $f_{n}(x)$
converges for almost every $x$. We will say that a martingale $(f_{n})$
is computable if $(f_{n})$ is a computable sequence of computable
functions.  
(The next two theorems can be strengthened to include martingales on 
arbitrary computable probability measures as in Subsection~\ref{subsec:Computable-measures}
where the functions $f_n$ are Brouwer/Schnorr effectively measurable as in 
Subsection~\ref{subsec:Pointwise-approaches}. 
See footnote~10 (p.~33) and Theorem~7.11 (p.~55) in Rute \cite{Rute:2013pd}.)
\begin{thm}[Takahashi \cite{Takahashi:2005}, Merkle, Mihalovi\'{c}, Slaman \cite{Merkle.Mihailovic.Slaman:2006}]
\label{thm:MLR-mart-conv}The following are equivalent for a sequence
$x\in\{0,1\}^\mathbb{N}$.

\begin{enumerate}
\item The sequence $x$ is Martin-Löf random.
\item The sequence $f_{n}(x)$ converges for every computable martingale
$(f_{n})$ such that $\sup_{n}\|f_{n}\|_{L^{1}}$ is finite.
\end{enumerate}
\end{thm}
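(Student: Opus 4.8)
The plan is to prove the theorem by relating computable martingales to the effective tests that define Martin-L\"of randomness. The key structural fact is that on Cantor space with the fair-coin measure, a martingale $(f_n)$ in the measure-theoretic sense, where $f_n$ depends only on the first $n$ bits, is essentially a classical betting strategy: setting $d(\sigma) = f_{|\sigma|}(x)$ for any $x \succ \sigma$ gives a function satisfying the averaging identity $d(\sigma) = \tfrac{1}{2}\bigl(d(\sigma 0) + d(\sigma 1)\bigr)$. So the first step is to make this translation precise and to observe that the $L^1$-boundedness hypothesis $\sup_n \|f_n\|_{L^1} < \infty$ becomes, after the standard decomposition of an $L^1$-bounded martingale into a difference of two nonnegative martingales, a statement about nonnegative supermartingales with bounded initial capital.

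For the direction $(1) \Rightarrow (2)$, I would argue that if $x$ is Martin-L\"of random, then $f_n(x)$ converges. The approach is to reduce to a nonnegative computable martingale $(g_n)$ with $g_0$ finite, and then show the \emph{savings trick} / Doob upcrossing argument can be made effective enough to yield a Martin-L\"of test capturing the divergence set. Concretely, the set where a nonnegative martingale fails to converge is contained in the set where it makes infinitely many upcrossings of some rational interval $[a,b]$, and Doob's upcrossing inequality bounds the measure of the event ``at least $k$ upcrossings by stage $n$'' by $(a/b)^k$ times a constant. Since $(g_n)$ is computable, these events are uniformly effectively open, so one assembles them into a Martin-L\"of test whose intersection contains every point of divergence; hence no Martin-L\"of random can diverge.

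The converse $(2) \Rightarrow (1)$ is the contrapositive: if $x$ is \emph{not} Martin-L\"of random, I would build a computable $L^1$-bounded martingale that diverges at $x$. Given that $x$ lies in every set $\bigcap_n U_n$ for a universal Martin-L\"of test, I would convert the test into a computable martingale using the classical correspondence between measure and capital — a c.e. open set $U$ of measure $\le 2^{-n}$ induces a martingale that multiplies its capital by a large factor on $U$ — and then combine these across levels so that the resulting betting strategy succeeds (has unbounded capital, or at least diverges) exactly on the null set. The care needed here is that the universal test is only c.e., not computable, whereas the theorem demands a genuinely \emph{computable} martingale with computable component functions; this is where I expect the real work to lie.

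The main obstacle is precisely this tension between the c.e.\ nature of Martin-L\"of tests and the requirement that $(f_n)$ be a computable sequence of computable functions with merely \emph{finite} (not necessarily computable) $L^1$-bound. One cannot naively feed a universal c.e.\ test into a martingale and retain computability of the individual stages. The standard resolution, which I would follow, is to exploit the characterization of Martin-L\"of randomness via c.e.\ \emph{supermartingales} (the Ville/Schnorr-type equivalence) and then smooth a c.e.\ supermartingale into a computable martingale by a summation-over-stages construction: one writes the c.e.\ capital as an increasing limit of computable approximations and forms $f_n = \sum_k 2^{-k} m^{(k)}_n$ where each $m^{(k)}$ is a computable martingale, so that the finiteness of $\sup_n \|f_n\|_{L^1}$ is preserved by the geometric weighting while divergence at $x$ is inherited from the underlying c.e.\ object. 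Verifying that this weighted sum remains a \emph{computable} sequence of \emph{computable} functions, and that $L^1$-boundedness survives, is the technical crux; the upcrossing estimate in the forward direction, by contrast, I expect to be routine once the effectivity of the relevant open sets is noted.
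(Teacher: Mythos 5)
There is a genuine gap, and it is located exactly where you predicted the ``real work'' would lie. Your opening move --- reading the hypothesis as ``$f_n$ depends only on the first $n$ bits'' and identifying $(f_n)$ with a classical betting strategy $d(\sigma)$ satisfying $d(\sigma)=\tfrac12(d(\sigma0)+d(\sigma1))$ --- misidentifies the class of martingales in the theorem. The definition in force here is $\mathbb{E}[f_n\mid f_0,\dots,f_{n-1}]=f_{n-1}$, i.e.\ conditioning on the $\sigma$-algebra generated by the \emph{process itself}, with each $f_n$ an arbitrary computable (hence continuous, but not finitely supported) function on $\{0,1\}^{\mathbb{N}}$. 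These are the ``martingale processes'' of Merkle--Mihailovi\'c--Slaman, a strictly larger class than the computable dyadic martingales. The paper itself draws this contrast: Theorem~\ref{thm:CR-mart-conv} states that convergence of nonnegative computable \emph{dyadic} martingales characterizes \emph{computable} randomness, which is strictly weaker than Martin-L\"of randomness. This is not a cosmetic point --- it kills your converse. Any object your $(2)\Rightarrow(1)$ construction produces is again a computable dyadic martingale (the weighted sum $\sum_k 2^{-k}m^{(k)}_n$ of uniformly computable dyadic martingales is one), and every such martingale converges at every computably random point. Since there exist computably random sequences that are not Martin-L\"of random, no computable dyadic martingale, however cleverly assembled from a universal test or a c.e.\ supermartingale, can diverge at all non-ML-random points. (Separately, the smoothing $\sum_k 2^{-k}m^{(k)}$ need not even inherit success from the c.e.\ object, since the weights can absorb the growth of the approximations.) The actual content of the converse in Takahashi and in Merkle--Mihailovi\'c--Slaman is precisely the exploitation of the coarser filtration: because one only conditions on the past \emph{values} of the process rather than on the full dyadic past, distinct histories can be merged, and this extra freedom is what lets a genuinely computable process capture a c.e.\ test.

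Your forward direction is essentially sound and close in spirit to the cited proofs: for a computable sequence $(f_n)$ the events ``at least $k$ upcrossings of $[a,b]$ by some stage'' and ``$\sup_n|f_n|>\lambda$'' are uniformly effectively open, Doob's upcrossing and maximal inequalities bound their measures by $C/(k(b-a))$ and $C/\lambda$ with $C=\sup_n\|f_n\|_{L^1}+|a|$, and the resulting sets assemble into a Martin-L\"of test covering the divergence set (the unknown finite constant $C$ is harmless, since one may shift indices by a fixed non-uniform amount). Two small cautions there: apply the upcrossing inequality directly to the signed $L^1$-bounded martingale rather than first invoking the Krickeberg decomposition into nonnegative martingales, which is not effective; and note that your $(a/b)^k$ bound is the nonnegative-martingale version, while the general $L^1$-bounded case gives the linear-in-$1/k$ bound, which still suffices.
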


\begin{thm}[Rute {{\cite[Thm.~7.11, p.~55, Thm.~12.9, p.~68]{Rute:2013pd}}}]
\label{thm:SR-mart-conv}The following are equivalent for a sequence
$x\in\{0,1\}^\mathbb{N}$.

\begin{enumerate}
\item The sequence $x$ is Schnorr random.
\item The sequence $f_{n}(x)$ converges for every computable martingale
$(f_{n})$ such that $\sup_{n}\|f_{n}\|_{L^{1}}$ is finite and computable
and such that $\lim_{n}f_{n}$ is effectively integrable.
\end{enumerate}
\end{thm}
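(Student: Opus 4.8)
The plan is to prove both implications by reducing to the monotone-convergence characterizations of Schnorr randomness (Theorems~\ref{thm:SR-monotone} and~\ref{thm:CR-monotone}), using Doob's upcrossing and maximal inequalities as the bridge between martingale convergence and monotone convergence. Throughout I write $\mathcal{F}_n = \sigma(f_0,\ldots,f_n)$ for the martingale's filtration, $\mathcal{G}_n$ for the dyadic filtration generated by the first $n$ bits, and $f_\infty = \lim_n f_n$.

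I would do the direction (2)$\Rightarrow$(1) first, as it is the cleaner half, arguing the contrapositive. If $x$ is not Schnorr random, then by the ``Moreover'' clause of Theorem~\ref{thm:SR-monotone} there is an increasing computable sequence of nonnegative continuous functions $g_n$ with $\sup_n\int g_n$ finite and computable and $g_n(x)\to\infty$. Put $g_\infty=\sup_n g_n$. Since $\|g_\infty-g_n\|_{L^1}=\int g_\infty-\int g_n\to 0$ effectively (the limiting integral is computable), $g_\infty$ is effectively integrable. Now define the closed dyadic martingale $G_n=\mathbb{E}[g_\infty\mid\mathcal{G}_n]$, so that $G_n(y)=2^{n}\int_{[y\upharpoonright n]}g_\infty\,d\mu$. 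Because $g_\infty$ is effectively integrable, $(G_n)$ is a computable martingale; being nonnegative it has $\sup_n\|G_n\|_{L^1}=\int g_\infty$ finite and computable; and by L\'evy's theorem $\lim_n G_n=g_\infty$ is effectively integrable. Thus $(G_n)$ satisfies every hypothesis of clause~(2). It nonetheless diverges at $x$: for each fixed $N$, continuity of $g_N$ and $g_\infty\ge g_N$ give $\liminf_n G_n(x)\ge\lim_n 2^{n}\int_{[x\upharpoonright n]}g_N\,d\mu=g_N(x)$, and taking the supremum over $N$ yields $\liminf_n G_n(x)\ge\sup_N g_N(x)=\infty$. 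Hence $G_n(x)\to\infty$, so $G_n(x)$ does not converge, establishing $\neg(1)\Rightarrow\neg(2)$.

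For (1)$\Rightarrow$(2), let $x$ be Schnorr random and let $(f_n)$ be a martingale as in~(2). The classical mechanism is Doob's upcrossing inequality: $f_n$ converges exactly at those points where, for every pair of rationals $a<b$, the upcrossing counts $U^{[a,b]}_n$ remain bounded, and $(b-a)\int U^{[a,b]}_n\,d\mu\le\sup_m\|f_m\|_{L^1}+|a|$. Since $\sup_m\|f_m\|_{L^1}$ is computable, each $\bigl(U^{[a,b]}_n\bigr)_n$ is an increasing computable sequence whose integrals are bounded by a computable constant. The goal is to feed these sequences (after passing to continuous majorants) into Theorem~\ref{thm:SR-monotone} or Theorem~\ref{thm:CR-monotone} to conclude $\sup_n U^{[a,b]}_n(x)<\infty$ for every rational $a<b$, which forces $f_n(x)$ to converge.

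The step I expect to be the main obstacle is exactly the passage from Martin-L\"of to Schnorr randomness. The upcrossing bound already shows the divergence set is a \emph{Martin-L\"of} null set, which is all Theorem~\ref{thm:MLR-mart-conv} requires; but to invoke the monotone-convergence characterizations I need the associated covers to be genuine \emph{Schnorr} tests, i.e.\ I need $\sup_n\int U^{[a,b]}_n\,d\mu$ to be \emph{computable} rather than merely bounded by a computable constant, and here the ``escaping mass'' of a non-uniformly-integrable martingale (for instance $f_n=2^{n}\mathbf{1}_{[0,2^{-n}]}$) is not controlled by $f_\infty$ alone. This is precisely where effective integrability of $f_\infty$ must be used. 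I would handle it by the decomposition $f_n=\mathbb{E}[f_\infty\mid\mathcal{F}_n]+R_n$: the closed part $\mathbb{E}[f_\infty\mid\mathcal{F}_n]$ converges at Schnorr randoms by an effective L\'evy/Lebesgue-differentiation argument (the filtration version of Theorem~\ref{thm:SR-LDT}, as in Rute \cite[Thm.~7.11]{Rute:2013pd}), while the potential $R_n\to0$, which carries the escaping mass and has a computable $L^1$ bound $\|R_n\|_{L^1}\le\sup_m\|f_m\|_{L^1}+\|f_\infty\|_{L^1}$, is controlled by combining the upcrossing bound with Doob's maximal inequality so that the tail measures $\mu(\sup_{m\ge n}|R_m|>\varepsilon)$ become effectively estimable and the divergence set is realized as an honest Schnorr null set.
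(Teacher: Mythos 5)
First, a remark on the comparison itself: the survey does not prove this theorem — it is quoted from Rute \cite{Rute:2013pd} — so your proposal can only be judged on its own merits. Your direction (2)$\Rightarrow$(1) is correct and complete: extracting an increasing sequence $(g_n)$ from the ``Moreover'' clause of Theorem~\ref{thm:SR-monotone}, observing that $g_\infty=\sup_n g_n$ is effectively integrable because $\int g_n$ increases effectively to the computable value $\sup_n\int g_n$, forming the closed dyadic martingale $G_n=\mathbb{E}[g_\infty\mid\mathcal{G}_n]$, and forcing divergence at $x$ via $\liminf_n G_n(x)\ge g_N(x)$ (continuity of $g_N$ together with $g_\infty\ge g_N$) is exactly the right construction, and you verify every hypothesis of clause~(2) for $(G_n)$.

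The gap is in (1)$\Rightarrow$(2), at the step you yourself flag as the main obstacle. After the decomposition $f_n=\mathbb{E}[f_\infty\mid\mathcal{F}_n]+R_n$, you propose to make the tail measures $\mu(\sup_{m\ge n}|R_m|>\varepsilon)$ ``effectively estimable'' by ``combining the upcrossing bound with Doob's maximal inequality.'' Neither inequality can do this. Doob's maximal inequality applied to $(R_m)_{m\ge n}$ gives only $\mu(\sup_{m\ge n}|R_m|>\varepsilon)\le\varepsilon^{-1}\sup_m\|R_m\|_{L^1}$, and $\|R_m\|_{L^1}$ does not tend to $0$: for a nonnegative martingale it is the constant $\sup_m\|f_m\|_{L^1}-\|f_\infty\|_{L^1}$, i.e.\ precisely the escaping mass, which is nonzero in exactly the non-uniformly-integrable case you are trying to handle. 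Likewise the upcrossing inequality bounds $\sup_N\int U^{[a,b]}_N\,d\mu$ by a computable constant but leaves that supremum only lower semicomputable, which is why it yields a Martin-L\"of test and not a Schnorr test. So no effective rate for the tail measures is actually produced, and the divergence set of $R_n$ is never exhibited as a Schnorr null set. Note also that $R_n$ is itself a computable martingale with computable $\sup_m\|R_m\|_{L^1}$ and effectively integrable limit (namely $0$), so the reduction leaves you with an instance of the very theorem being proved. What is missing is a source of \emph{effective} convergence to a \emph{computable} limit. One such source, for $R_n\ge0$, is that $\mathbb{E}[R_n\wedge\lambda]$ is nonincreasing in $n$ (conditional Jensen for the concave map $t\mapsto t\wedge\lambda$), computable term by term, and converges to the computable value $0$ by bounded convergence, hence converges with a computable rate; feeding that rate into the maximal inequality for the supermartingale $(R_m\wedge\lambda)_{m\ge n}$ is what actually produces a Schnorr cover. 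Without a step of this kind your argument only recovers Theorem~\ref{thm:MLR-mart-conv}.
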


Most of the martingale work in algorithmic randomness, however, has been focused
on \emph{computable dyadic martingales} (often just called \emph{computable martingales}), 
that is, martingales $f_n$ of the form
$f_n(x) = g(x \upharpoonright n)$ for some computable function $g$.  This provides
another convenient characterization of computable randomness.

\begin{thm}[Folklore {{\cite[Theorem~7.1.3]{Downey.Hirschfeldt:2010}}}, following Schnorr \cite{Schnorr1971a}]
\label{thm:CR-mart-conv}The following are equivalent for a sequence
$x\in\{0,1\}^\mathbb{N}$.

\begin{enumerate}
\item The sequence $x$ is computably random.
\item The sequence $f_{n}(x)$ converges for every nonnegative computable dyadic martingale
$(f_{n})$.
\end{enumerate}
\end{thm}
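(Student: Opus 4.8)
The plan is to prove the equivalence through the standard martingale characterization of computable randomness, treating the bounded-Martin-L\"of-test definition and its martingale reformulation (Merkle--Mihailovi\'c--Slaman, as in the definition given above) as interchangeable: $x$ is computably random if and only if no nonnegative computable dyadic martingale $d$ is unbounded along $x$, i.e.\ $\sup_n d(x\upharpoonright n)<\infty$. Since $\sup_n d(x\upharpoonright n)=\infty$ is equivalent to $\limsup_n d(x\upharpoonright n)=\infty$, this is exactly Schnorr's ``no computable martingale succeeds'' condition. The content of the theorem is then to upgrade \emph{boundedness} of every such martingale along $x$ to \emph{convergence}.

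The direction (2)~$\Rightarrow$~(1) is immediate by contraposition: if $x$ is not computably random, there is a nonnegative computable dyadic martingale $(d_n)$ with $\sup_n d_n(x)=\infty$, and such a sequence cannot converge to a finite limit, so (2) fails.

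For (1)~$\Rightarrow$~(2) I would also argue by contraposition, assuming some nonnegative computable dyadic martingale $(f_n)$ fails to converge at $x$ and producing a nonnegative computable dyadic martingale unbounded along $x$. Split into two cases. If $\sup_n f_n(x)=\infty$, then $(f_n)$ itself witnesses that $x$ is not computably random. Otherwise $\sup_n f_n(x)<\infty$ while $\liminf_n f_n(x)<\limsup_n f_n(x)$, so I can fix rationals $a<b$ with $\liminf_n f_n(x)<a<b<\limsup_n f_n(x)$; the sequence then makes infinitely many upcrossings of $[a,b]$. Here I invoke Doob's upcrossing construction: let $H$ be the predictable $0/1$ strategy holding one unit exactly while in the middle of an upcrossing of $[a,b]$, and set $d_n=a+(H\cdot f)_n$, where $(H\cdot f)$ is the martingale transform of $f$ by $H$. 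The pathwise upcrossing inequality together with $f_n\ge 0$ gives $d_n\ge (b-a)\,U_n[a,b]$ at \emph{every} point (where $U_n[a,b]$ counts completed upcrossings), so $d$ is a genuine nonnegative dyadic martingale; and along $x$ we have $d_n(x)\ge (b-a)\,U_n[a,b](x)\to\infty$, so $d$ witnesses that $x$ is not computably random.

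The main obstacle is the \emph{computability of the betting strategy} $H$. Because $g$ takes computable-real values, the threshold events ``$g(\sigma)\le a$'' and ``$g(\sigma)\ge b$'' are only semidecidable, so the sharp upcrossing strategy is not obviously computable as a total function of the node $\sigma$. I would resolve this with a robust version: either first reduce to a rational-valued computable martingale, or define $H(\sigma)$ from finite rational approximations of $g$ on the prefixes of $\sigma$ using a slightly widened band $a<a'<b'<b$, so that the approximate strategy is total and computable while still recording infinitely many upcrossings of the narrower band along $x$. One then checks that the transform remains a legitimate predictable $0/1$ strategy (so $d$ stays a nonnegative martingale) and that the capital captured per confirmed upcrossing is still bounded below by a fixed positive constant. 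Both are routine once the band is chosen, and this is the only step where genuine effective care is required.
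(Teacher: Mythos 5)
Your argument is correct and is essentially the standard folklore proof that the paper cites to Downey--Hirschfeldt (Theorem~7.1.3): pass between the bounded-test and martingale definitions of computable randomness, handle the unbounded case trivially, and convert bounded oscillation into success via Doob's upcrossing strategy, using nonnegativity of $f$ to keep the transformed martingale nonnegative. You also correctly isolate the one delicate point --- that the exact threshold strategy is only semidecidable for real-valued computable martingales --- and the two fixes you sketch (reduction to a rational-valued martingale, or a widened band $a<a'<b'<b$ driven by finite rational approximations) are exactly the standard ways this is resolved.
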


\subsection{Ergodic theory\label{subsec:Ergodic-theory}}

Again, we work in the fair-coin measure on Cantor space.
A measure preserving transformation $T\colon\{0,1\}^\mathbb{N}\rightarrow\{0,1\}^\mathbb{N}$ is
a measurable map such that $\mu(T^{-1}(A))=\mu(A)$ for all measurable
sets $A$. The pointwise ergodic theorem states that for any integrable
function $f\colon\{0,1\}^\mathbb{N}\rightarrow\mathbb{R}$, the following average converges
for almost every $x$. 
\[
\frac{1}{n}\sum_{k=0}^{n-1}f(T^{k}x).
\]

In this next theorem, an \emph{almost everywhere computable map}
is one which is computable on a $\Pi_{2}^{0}$ set of measure one.
(Every result in this subsection concerning almost everywhere computable
maps also holds for the more general Brouwer/Schnorr effectively
measurable maps that we describe in Subsection~\ref{subsec:Pointwise-approaches}.
The results also extend to computable probability measures 
on computable metric spaces as discussed in Subsection~\ref{subsec:Computable-measures}.  
For full generalizations of the next two theorems, see Hoyrup and Rojas \cite[Thm.~8]{Hoyrup.Rojas:2009a}
and Rute \cite[p.~72]{Rute:2013pd}, respectively.)

\begin{thm}[V'yugin \cite{Vyugin:1998}, Franklin, Towsner \cite{Franklin.Towsner:2014}]
\label{thm:MLR-ergodic}The following are equivalent for a sequence $x\in\{0,1\}^\mathbb{N}$.

\begin{enumerate}
\item The sequence $x$ is Martin-Löf random.
\item The ergodic averages 
\[
\frac{1}{n}\sum_{k=0}^{n-1}f(T^{k}(x))
\]
converge for every integrable, a.e.~computable $f\colon\{0,1\}^\mathbb{N}\rightarrow\mathbb{R}$
and for every a.e.~computable measure-preserving $T$.
\end{enumerate}
\end{thm}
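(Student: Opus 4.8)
The plan is to establish the two implications by different means: $(1)\Rightarrow(2)$ as an effective covering argument resting on a classical ergodic inequality, and $(2)\Rightarrow(1)$ as an explicit construction of a dynamical system tailored to a given non-random point (this is essentially the split between V'yugin's and Franklin--Towsner's contributions). Before either direction, I would dispose of the domain issue created by the a.e.~computable data. Writing $A_nf(x)=\frac1n\sum_{k<n}f(T^kx)$, I need the whole forward orbit of $x$ to lie in the domains where $f$ and $T$ are computable. Each of $f,T$ agrees with a genuinely computable object off an effective null set $N$, and since $T$ preserves $\mu$ the orbit-correction set $\bigcup_{k\ge0}T^{-k}N$ is again an effective null set; hence every Martin-L\"of random $x$ has its entire orbit inside the good domain, the averages $A_nf(x)$ are well defined, and I may argue as though $f$ and $T$ were total computable maps.

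For $(1)\Rightarrow(2)$ I would show that the non-convergence set is Martin-L\"of null. Decomposing it as the countable union, over rational $\alpha<\beta$, of the oscillation sets $D_{\alpha,\beta}=\{x:\liminf_nA_nf(x)<\alpha<\beta<\limsup_nA_nf(x)\}$, it suffices to cover each $D_{\alpha,\beta}$ uniformly. The essential tool is the \emph{effective ergodic upcrossing inequality} (Bishop's ergodic analogue of Doob's upcrossing bound): the expected number of upcrossings of $[\alpha,\beta]$ by the sequence $(A_nf(x))_n$ is bounded by a finite constant $C(f,\alpha,\beta)$. For computable $f$ and $T$ the set $U_N=\{x:\text{at least }N\text{ upcrossings of }[\alpha,\beta]\}$ is effectively open---it is an existential statement over finitely many strict inequalities $A_{n_i}f(x)<\alpha$ and $A_{m_i}f(x)>\beta$---and the inequality gives $\mu(U_N)\le C(f,\alpha,\beta)/N$. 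After reindexing this is a Martin-L\"of test with $\bigcap_NU_N=D_{\alpha,\beta}$, so $D_{\alpha,\beta}$, and hence the full non-convergence set, is Martin-L\"of null. The point worth stressing is that the ergodic averages have \emph{no} computable rate of convergence, yet the \emph{count} of their oscillations is effectively controlled, and this is exactly the strength needed for Martin-L\"of randomness.

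For $(2)\Rightarrow(1)$ I would argue contrapositively: given $x$ that is not Martin-L\"of random, I must build a single a.e.~computable measure-preserving $T$ and integrable a.e.~computable $f$ for which $A_nf(x)$ diverges. The shift map is useless here, since its averages obey only the law of large numbers, which far weaker randomness notions already satisfy. Instead I would drive a cutting-and-stacking (Rokhlin tower) construction with a universal Martin-L\"of test $\bigcap_mV_m$ capturing $x$: at stage $m$ one builds a tower whose base is coupled to $V_m$ so that any point caught by $V_m$ spends a controlled but deliberately fluctuating fraction of its time in a fixed target set $B=f^{-1}(1)$. Choosing the tower heights to grow quickly, one forces $A_nf(x)$ down toward $0$ and up toward $1$ along two interleaved subsequences whose lengths are matched to the stages at which $x$ descends into the test, yielding $\liminf_nA_nf(x)=0<1=\limsup_nA_nf(x)$.

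I expect this construction to be the main obstacle. One must keep $T$ simultaneously measure preserving and a.e.~computable and keep $f=\mathbf1_B$ a.e.~computable, while arranging that the \emph{specific} non-random point $x$---not merely some positive-measure collection of points---witnesses divergence, all against the backdrop of the classical pointwise ergodic theorem, which pins the averages to a limit on a set of full measure. Synchronizing the tower heights and the target set with the only co-null, unbounded effective structure supplied by the test, so that the two competing subsequences of averages are realized precisely at $x$, is the delicate core of the proof and is where essentially all of the difficulty resides.
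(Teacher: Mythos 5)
The paper does not prove this theorem; it is a survey statement cited to V'yugin and Franklin--Towsner, so there is no in-paper argument to compare against line by line. That said, the paper's own commentary in the subsection on connections with constructive mathematics confirms that V'yugin's proof of $(1)\Rightarrow(2)$ runs through Bishop's constructive upcrossing inequality for the ergodic theorem, which is precisely your argument: the oscillation sets $D_{\alpha,\beta}$ are covered by the uniformly effectively open sets $U_N$, and the inequality $\mu(U_N)\leq C/N$ reindexes into a Martin-L\"of test (note that $C$ need not be computable when $f$ is merely integrable, but a hard-wired integer upper bound suffices, since one only needs the existence of a test). Your preliminary reduction of the a.e.~computable data is also sound: the complement of the $\Pi^0_2$ domain is a union of null $\Pi^0_1$ sets, hence Martin-L\"of null, and measure preservation propagates this along the orbit. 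So the forward direction is essentially complete. For $(2)\Rightarrow(1)$ you have correctly identified the Franklin--Towsner strategy --- a cutting-and-stacking construction driven by a test capturing the non-random point, with the shift map rightly dismissed as too weak --- but, as you say yourself, you have only described the plan; the synchronization of tower heights with the stages of the test so that the divergence is witnessed at the specific point $x$ is the entire content of that direction and remains to be carried out. As a blind reconstruction of the known proofs this is the right architecture, but the converse should be regarded as an accurate roadmap rather than a proof.
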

A measure preserving transformation is \emph{ergodic} if and only
if $T^{-1}(A)=A$ implies that $\mu(A)$ is $0$ or $1$. For ergodic
$T$, the ergodic theorem states that almost surely 
\[
\lim_{n\to\infty}\frac{1}{n}\sum_{k=0}^{n-1}f(T^{k}(x))=\int f\,d\mu.
\]

\begin{thm}[Gács, Hoyrup, Rojas \cite{Gacs.Hoyrup.Rojas:2011}]
\label{thm:SR-ergodic}The following are equivalent for a sequence $x\in[0,1]$.

\begin{enumerate}
\item The sequence $x$ is Schnorr random.
\item The ergodic averages 
\[
\frac{1}{n}\sum_{k=0}^{n-1}f(T^{k}(x))
\]
converge for every a.e.~computable $f\colon\{0,1\}^\mathbb{N}\rightarrow\mathbb{R}$
which is effectively integrable and for every a.e.~computable
ergodic measure-preserving $T$.
\end{enumerate}
\end{thm}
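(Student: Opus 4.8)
The plan is to prove both directions while exploiting the feature that separates the ergodic case from the general pointwise ergodic theorem of Theorem~\ref{thm:MLR-ergodic}: since $T$ is ergodic, the a.e.\ limit is the \emph{constant} $\int f\,d\mu$, and this constant is computable because $f$ is effectively integrable. The whole problem therefore reduces to showing that, for the mean-zero function $g = f - \int f\,d\mu$, the averages $A_n g(x) := \frac1n\sum_{k=0}^{n-1} g(T^k x)$ tend to $0$ at $x$ if and only if $x$ is Schnorr random. For the direction (1) $\Rightarrow$ (2), I would first decompose $g$. For ergodic $T$ the constants are the only invariants, so the coboundaries $\{h - h\circ T\}$ are $L^1$-dense among mean-zero functions; I would make this effective, producing for each $k$ a bounded computable $h_k$ with $\|g - (h_k - h_k\circ T)\|_{L^1} \le 4^{-k}$, this norm being computable since $g$ is effectively integrable. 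Writing $r_k = g - (h_k - h_k\circ T)$, the coboundary part telescopes, $A_n(h_k - h_k\circ T)(x) = \frac1n\bigl(h_k(x) - h_k(T^n x)\bigr)$, so $|A_n(h_k - h_k\circ T)(x)| \le 2\|h_k\|_\infty / n \to 0$ at \emph{every} $x$ with a computable rate. Hence $\limsup_n |A_n g(x)| \le \sup_n |A_n r_k(x)| =: r_k^\ast(x)$ for every $k$, and it remains only to control the maximal functions $r_k^\ast$.

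The heart of the argument is an effective maximal ergodic inequality, $\mu(\{r_k^\ast > \lambda\}) \le \lambda^{-1}\|r_k\|_{L^1}$, whose right-hand side is computable. Because the maximal function need not be integrable (it is only weak type $(1,1)$), I would not integrate $r_k^\ast$ directly but instead pass through Miyabe's monotone-convergence characterization, Theorem~\ref{thm:SR-monotone}. Concretely, if $A_n g(x)\not\to 0$ then $r_k^\ast(x) \ge \delta > 0$ for all $k$, so a weighted sum $\sum_k 2^k (r_k^\ast \wedge M_k)$, with truncation levels $M_k$ chosen from the computable weak-type bounds so that $\sum_k 2^k \int (r_k^\ast \wedge M_k)\,d\mu$ is finite and computable, diverges at every such $x$. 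Assembling these partial sums into a computable increasing sequence of nonnegative continuous functions with finite computable total integral, whose supremum is infinite exactly where $A_n g(x)\not\to 0$, and applying Theorem~\ref{thm:SR-monotone}, forces convergence at every Schnorr random.

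For (2) $\Rightarrow$ (1) I would argue by contraposition: given $x$ not Schnorr random, hence lying in $\bigcap_n U_n$ for some Schnorr test $(U_n)$, I would build a computable ergodic $T$ (for instance the dyadic adding machine, which is ergodic and computable) together with an effectively integrable, a.e.\ computable $f$ whose ergodic averages oscillate at $x$. The key point is that a single fixed system cannot suffice, since the strong law of large numbers for the shift does not characterize Schnorr randomness; the system must be tailored to the test, encoding the shrinking sets $U_n$ into $f$ so that the visits of the orbit of $x$ to these sets force $A_n f(x)$ to oscillate, while $\int f\,d\mu$ stays computable and the limit remains well defined at every Schnorr random.

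I expect the main obstacle to be the effectivization in (1) $\Rightarrow$ (2): both the computable density of coboundaries-plus-constants for a general computable ergodic $T$, and, more seriously, converting the merely weak type $(1,1)$ maximal ergodic inequality into a genuine Schnorr witness, since the measures of the relevant sets must be \emph{computable}, not merely bounded by $2^{-n}$ as for a Martin-L\"of test. The truncation and reweighting routed through Theorem~\ref{thm:SR-monotone} is what I would rely on to overcome this, with the uniform computability of the truncation levels and of the total integral being the delicate step; for the full generality stated in the surrounding text one would invoke the Hoyrup--Rojas and Rute machinery already cited.
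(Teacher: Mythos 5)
The survey states this result without proof, citing G\'acs--Hoyrup--Rojas \cite{Gacs.Hoyrup.Rojas:2011}, so I am comparing your sketch against the argument in that source. Your high-level plan for $(1)\Rightarrow(2)$ is the right classical skeleton and is essentially theirs: ergodicity makes the limit the computable constant $\int f\,d\mu$, one approximates the mean-zero part by coboundaries $h_k-h_k\circ T$ (whose averages telescope and vanish everywhere), and one controls the remainders $r_k$ with the maximal ergodic inequality. The gap is in the one step that actually distinguishes Schnorr from Martin-L\"of randomness: turning the weak type $(1,1)$ bound $\mu\{r_k^\ast>\lambda\}\le\lambda^{-1}\|r_k\|_{L^1}$ into exceptional sets of \emph{computable} measure. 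Your proposed device routes this through Theorem~\ref{thm:SR-monotone} and requires $\int(r_k^\ast\wedge M_k)\,d\mu$ to be computable, but you never establish this, and it is essentially equivalent to what is being proved: $\mu\{r_k^\ast>\lambda\}=\lim_N\mu\{\max_{n\le N}|A_nr_k|>\lambda\}$ is only lower semicomputable (an increasing union of open sets), the weak-type inequality supplies only an upper \emph{bound}, not an approximation from above, and hence $\int(r_k^\ast\wedge M_k)\,d\mu=\int_0^{M_k}\mu\{r_k^\ast>\lambda\}\,d\lambda$ is a priori only lower semicomputable. Computing it from above would require an effective rate for the stabilization of $\max_{n\le N}|A_nr_k|$, which is the effective a.e.\ convergence you are trying to prove. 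The known resolution is different: one perturbs the thresholds $\lambda$ to generically chosen values at which the measure of the monotone family of open sets becomes computable, i.e.\ one establishes \emph{effective almost uniform convergence} and then invokes the fact that this implies convergence at Schnorr randoms (Galatolo--Hoyrup--Rojas \cite{Galatolo.Hoyrup.Rojas:2010a}; see Lemma~\ref{lem:useful-conversions}(\ref{item:ua-implies-SR})). You flag this as ``the delicate step,'' but flagging it does not supply it, and your specific truncation-and-reweighting mechanism does not circumvent it.

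For $(2)\Rightarrow(1)$ you give only a plan: encode a Schnorr test capturing $x$ into an effectively integrable $f$ over some computable ergodic system so that the averages oscillate at $x$. This is the right intention, but there is no construction: the hard part is arranging that the orbit of $x$ revisits the encoded sets at controlled times so that $A_nf(x)$ oscillates (rather than merely being large once), while keeping $\|f\|_{L^1}$ computable and the limit well defined at every Schnorr random. Also, your assertion that ``a single fixed system cannot suffice'' is not justified by the failure of the strong law to characterize Schnorr randomness --- that failure concerns a single fixed \emph{function}; with $f$ ranging over all effectively integrable functions a fixed system (such as the shift) may well suffice, and the tailoring in \cite{Gacs.Hoyrup.Rojas:2011} is of the function to the test. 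In summary: correct architecture, but the two essential effectivizations --- computable measures for the maximal-function exceptional sets, and the explicit divergence construction for the converse --- are missing.
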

A special case of the ergodic theorem is the strong law of large numbers (SLLN).
As mentioned above, SLLN alone does not characterize any algorithmic randomness notions. 
Nonetheless, starting with Von Mises \cite{von-Mises:1919}, there have been attempts to 
define randomness by requiring that a sequence $x$ not only satisfy SLLN, 
but that certain transformations $T(x)$ of that sequence do as well.
The transformations that Von Mises considered were subsequences of $x$ 
given by a \emph{selection rule}; that is, one has to choose whether to select 
the bit $x_i$ based only on the values of the former bits $x_0, ... x_{i-1}$.
A \emph{Church stochastic sequence} is the formulation of this notion
where the selection rules are computable, that is given by
a computable function $f\colon \{0,1\}^{<\mathbb{N}} \to \{\text{yes}, \text{no}\}$
\cite[Def.~7.4.1]{Downey.Hirschfeldt:2010}.
While this stochasticity notion and its generalizations are not as 
useful as the established notions of randomness, Schnorr realized that 
Von Mises's ideas can be used to define Schnorr randomness 
if one uses the correct class of transformations $T(x)$.
\begin{thm}[{Schnorr \cite[Thm.~12.1]{Schnorr:1971rw}}]\label{thm:sch-stoch}
\label{thm:SR-map-SLLN}The following are equivalent for a sequence $x\in\{0,1\}^\mathbb{N}$.

\begin{enumerate}
\item The sequence $x$ is Schnorr random.
\item The frequency of $1$s in $T(x)$ converges to $1/2$, i.e.
\[
\lim_{n\to\infty}\frac{1}{n}\sum_{k=0}^{n-1}(T(x))_{k}=\frac{1}{2},
\]
for every a.e.~computable measure-preserving map $T\colon[0,1]\rightarrow[0,1]$
where $(T(x))_{k}$ is the $k$th bit of $T(x)$.
\end{enumerate}
\end{thm}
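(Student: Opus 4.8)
The plan is to prove the two implications by different means: for the forward direction I reduce the bit-frequency average to an ergodic average for the shift and invoke Theorem~\ref{thm:SR-ergodic}, while for the reverse direction I build, from a Schnorr test for $x$, a single measure-preserving transformation that forces the law of large numbers to fail. Throughout I identify $\{0,1\}^{\mathbb{N}}$ under the fair-coin measure with $[0,1]$ under Lebesgue measure, and I take ``a.e.\ computable measure-preserving map'' in the Brouwer/Schnorr effectively measurable sense of Subsection~\ref{subsec:Pointwise-approaches}, so that $T$ is computable off a Schnorr null set; this is the setting in which Theorem~\ref{thm:SR-ergodic} is properly stated for Schnorr randomness, and it guarantees that every Schnorr random lies in the domain of $T$.

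For $(1)\Rightarrow(2)$, let $S$ be the shift and $g=\mathbf{1}_{[1]}$ the first-bit indicator, so that $(T(x))_{k}=g(S^{k}(T(x)))$ and
\[
\frac{1}{n}\sum_{k=0}^{n-1}(T(x))_{k}=\frac{1}{n}\sum_{k=0}^{n-1}g\bigl(S^{k}(T(x))\bigr).
\]
Here $S$ is computable, measure-preserving, and ergodic (the Bernoulli shift), and $g$, being a rational simple function on a clopen set, is computable and effectively integrable with $\int g\,d\mu=\mu([1])=1/2$. The one remaining ingredient is a \emph{conservation} lemma: the $T$-preimage of a Schnorr null set is Schnorr null. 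Indeed, if $(U_{m})$ is a Schnorr test and $D$ the full-measure domain of $T$, then $T^{-1}(U_{m})=\widetilde{U}_{m}\cap D$ for effectively open $\widetilde{U}_{m}$ (obtained uniformly from the computable $T\upharpoonright D$), and $\mu(\widetilde{U}_{m})=\mu(T^{-1}(U_{m}))=\mu(U_{m})$ is computable by measure-preservation; so $(\widetilde{U}_{m})$ is again a Schnorr test, and together with the Schnorr null complement of $D$ it covers $T^{-1}\bigl(\bigcap_{m}U_{m}\bigr)$. Applying this to $x$ shows $T(x)$ is Schnorr random, and Theorem~\ref{thm:SR-ergodic} applied to $S$, $g$, and the point $T(x)$ yields convergence of the average to $\int g\,d\mu=1/2$.

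For $(2)\Rightarrow(1)$ I argue by contraposition: supposing $x$ is not Schnorr random, I construct an a.e.\ computable measure-preserving $T$ with $T(x)=1^{\infty}$, whose frequency of $1$s is $1\ne 1/2$. Fix a Schnorr test for $x$; after a routine normalization take it to be a decreasing sequence $\{0,1\}^{\mathbb{N}}=V_{0}\supseteq V_{1}\supseteq\cdots$ of effectively open sets with computable measures $\mu(V_{n})=2^{-n}$ and $x\in\bigcap_{n}V_{n}$. The shells $W_{n}=V_{n}\setminus V_{n+1}$ partition a full-measure set with $\mu(W_{n})=2^{-(n+1)}=\mu([1^{n}0])$. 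Define $T$ to send each shell $W_{n}$ onto the cylinder $[1^{n}0]$ by a measure-preserving map, and set $T=1^{\infty}$ on the set $\bigcap_{n}V_{n}$. Since the $W_{n}$ and the $[1^{n}0]$ are matching-measure partitions of full-measure sets, $T$ is measure-preserving; and since a point of $W_{n}$ has image beginning with $n$ ones, deeper shells force longer blocks of $1$s, so $1^{\infty}$ is the natural value on $\bigcap_{n}V_{n}$. Crucially, $\bigcap_{n}V_{n}$ is a Schnorr null set, witnessed by the very test $(V_{n})$, so defining $T$ there does not disturb its membership in the effectively measurable class even though this null set contains $x$; and $x\in\bigcap_{n}V_{n}$ gives $T(x)=1^{\infty}$.

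The main obstacle is the effectivity of the shell maps. I need, uniformly in $n$, a measure-preserving bijection modulo null sets between $(W_{n},\mu\upharpoonright W_{n})$ and $([1^{n}0],\mu\upharpoonright[1^{n}0])$ that is computable on a full-measure set, with the glued map $T$ computable off a Schnorr null set. The delicate point is that $W_{n}=V_{n}\cap V_{n+1}^{c}$ is a difference of effectively open sets, not effectively open, so one must run the canonical cumulative-measure isomorphism between two atomless computable probability spaces while only approximating membership in the closed set $V_{n+1}^{c}$. Because $\mu(W_{n})$ is computable and each boundary $\partial V_{n}$ is null, this goes through on a full-measure set; and determining, for a.e.\ $y$, the unique shell index $n$ with $y\in W_{n}$ is possible since off the null set $\bigcup_{m}\partial V_{m}$ both $y\in V_{n}$ and $y\in\mathrm{int}(V_{n+1}^{c})$ are effectively open conditions. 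Verifying these effectivity and uniformity claims---rather than the measure bookkeeping, which is immediate---is where essentially all the work lies.
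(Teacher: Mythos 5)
The paper itself gives no proof of this theorem (it is cited to Schnorr's book), so I can only judge your argument on its own terms. Your forward direction is fine and matches the natural route suggested by the survey's placement of the theorem: randomness conservation (the preimage of a Schnorr test under an a.e.\ computable measure-preserving map is again a Schnorr test, since $\mu(\widetilde{U}_m)=\mu(U_m)$ is computable) shows $T(x)$ is Schnorr random, and then one concludes either by Theorem~\ref{thm:SR-ergodic} for the shift or, more simply, by the fact stated at the start of Section~\ref{sec:Characterizing-randomness} that every Schnorr random satisfies the strong law of large numbers.

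The reverse direction, however, has a genuine gap, and it sits exactly at the point you flag as ``where essentially all the work lies.'' Your plan for computing $T$ is to determine the shell index of $y$ by observing that, off the null set $\bigcup_m\partial V_m$, both $y\in V_n$ and $y\in\mathrm{int}(V_{n+1}^{c})$ are effectively open conditions. But $\bigcup_m\partial V_m$ need not be null: an effectively open set of computable measure $2^{-m}$ can be dense (a union of tiny intervals around all rationals), in which case $\mathrm{int}(V_{m}^{c})=\varnothing$ and $\partial V_m$ has measure $1-2^{-m}$. Then $V_n\cap\partial V_{n+1}$ --- the set of points whose shell index your algorithm can never certify --- can have positive measure, so the $T$ you build is not a.e.\ computable, i.e.\ not computable on any $\Pi^0_2$ set of full measure, which is what clause (2) of the theorem requires you to exhibit. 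The deeper issue is that membership in an effectively open set of computable measure is only \emph{Schnorr layerwise} decidable (decidable on each of a sequence of located sets $K_n$ with $\mu(K_n)\geq 1-2^{-n}$, via the inner/outer regularity of Subsection~\ref{subsec:Other-representations}), and a Schnorr layerwise computable map is not automatically a.e.\ computable by a single algorithm on a single full-measure domain. Repairing this requires a genuinely different mechanism --- for instance Schnorr's own device of defining $T$ by a monotone computable map on finite strings, emitting bits of $T(y)$ incrementally as $y$ is \emph{observed} to enter deeper levels of the test rather than first deciding which shell $y$ occupies, while keeping the map exactly measure-preserving --- and none of that is present in, or follows from, your sketch. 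The measure bookkeeping (shell measures matching cylinder measures, $T(x)=1^{\infty}$) is indeed the easy part; the effectivity claim you defer is not merely unverified but, as stated, rests on a false premise.
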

Note, not every Church selection rule corresponds to an a.e.~computable 
measure-preserving map.  While, the Church selection rules are total functions
$f\colon \{0,1\}^{<\mathbb{N}} \to \{\text{yes}, \text{no}\}$, the corresponding 
transformation $T\colon \{0,1\}^\mathbb{N} \to \{0,1\}^\mathbb{N}$ may be partial, 
and the measure of the domain of $T$ may be less than one.
Indeed, Schnorr randomness and Church stochasticity are incomparable 
notions \cite[\S8.4]{Downey.Hirschfeldt:2010}.

\subsection{Some additional remarks\label{subsec:Additional-remarks}}

The above results show that each of Schnorr randomness, computable
randomness, and Martin-Löf randomness can be characterized naturally
via theorems from analysis. However, if one looks at the proofs,
for the most part these results can be rewritten in terms of effective
null sets. For example, Theorem~\ref{thm:MLR-bdd-var} 
can be adapted as follows.
\begin{thm}
\label{thm:ML-null-set-bdd-var}For each computable function $f\colon[0,1]\rightarrow\mathbb{R}$
of bounded variation, the set $\{x\in[0,1]:f\ \text{is not differentiable at}\ x\}$
is a Martin-Löf null set. Conversely, for each Martin-Löf null set
$A$, there is a computable function $f\colon[0,1]\rightarrow\mathbb{R}$
of bounded variation such that $A\subseteq\{x\in[0,1]:f\ \text{is not differentiable at}\ x\}$.
\end{thm}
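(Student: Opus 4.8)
The plan is to reduce Theorem~\ref{thm:ML-null-set-bdd-var} to the already-stated equivalence in Theorem~\ref{thm:MLR-bdd-var}, upgraded from a pointwise statement to a null-set statement. Theorem~\ref{thm:MLR-bdd-var} tells us that $x$ is Martin-L\"of random iff every computable function of bounded variation is differentiable at $x$. The forward direction of the present theorem is essentially the contrapositive of the $(\Rightarrow)$ direction (due to Demuth): if $f$ is a fixed computable function of bounded variation, then every Martin-L\"of random is a point of differentiability of $f$, so the non-differentiability set contains no Martin-L\"of random. To turn ``contains no Martin-L\"of random'' into ``is a Martin-L\"of null set,'' I would appeal to the \emph{effective} content of Demuth's proof: Demuth in fact produces (uniformly in a code for $f$) a single Martin-L\"of test covering the non-differentiability points, as indicated in Subsection~\ref{subsec:Russians} where it is noted that the set of non-differentiable points ``can be covered by a constructive covering.'' So the first half is really a pointer to the effective version of the cited result rather than a fresh argument.

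\medskip

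\emph{The converse} is the substantive direction and is where I expect the main work. Given an arbitrary Martin-L\"of null set $A$, I must \emph{build} a single computable bounded-variation function $f$ whose non-differentiability set contains $A$. The natural strategy is to start from a Martin-L\"of test $(U_n)$ with $A \subseteq \bigcap_n U_n$ and $\mu(U_n) \le 2^{-n}$, and to encode the oscillation of the test into the local behavior of $f$. The standard device is to let $f$ be (an antiderivative of) a signal that oscillates more and more violently as one descends through the nested open sets $U_n$. Concretely, since each $U_n = \bigcup_k I_{n,k}$ is a computable union of rational open intervals, one can place, inside each component interval, a small ``tent'' or ``sawtooth'' bump whose slope alternates sign; summing these bumps with carefully chosen amplitudes gives a function $g = \sum_n g_n$ where $g_n$ is supported on $U_n$. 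Choosing the amplitudes to decay fast enough (say like $2^{-n}$ relative to the interval lengths) guarantees that $\sum_n \Var(g_n) < \infty$, so that $g$ is of bounded variation, while the summed construction is uniformly computable because the test is. One then arranges that at any point $x \in \bigcap_n U_n$ the difference quotients of $g$ fail to settle, because $x$ sits inside arbitrarily deep bumps of nonvanishing alternating slope, forcing the $\limsup$ and $\liminf$ of the difference quotients to disagree and hence $g$ to be non-differentiable at $x$.

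\medskip

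\emph{The main obstacle} is the quantitative bookkeeping that makes both demands hold simultaneously: the amplitudes must be small enough (summably, weighted by interval length) to keep $\Var(g) < \infty$ and to keep the construction computable, yet large enough (relative to the scale $2^{-n}$ at which $x$ is trapped in $U_n$) that the difference quotients genuinely oscillate at every $x \in \bigcap_n U_n$ rather than merely being perturbed by a negligible amount. The classical template for this is the construction of a bounded-variation (indeed monotone, after passing to $\Var_0^x(g)$) function that is non-differentiable on a prescribed $G_\delta$ null set; the point here is to verify that it can be carried out uniformly and computably from a Martin-L\"of test. I would handle this by fixing, for each component interval $I_{n,k}$ of length $\ell$, a bump of height $\ell \cdot 2^{-n}$ and checking that the resulting one-sided difference quotients at a trapped point alternate between values bounded away from $0$ and values near $0$ infinitely often. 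With the amplitudes so chosen, $\sum_{n,k} \ell_{n,k} 2^{-n} \le \sum_n 2^{-n}\mu(U_n) < \infty$ bounds the variation, and computability is inherited from the computable presentation of the $I_{n,k}$, completing the converse.
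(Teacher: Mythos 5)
Your forward direction is consistent with what the paper intends: Theorem~\ref{thm:ML-null-set-bdd-var} is presented as an adaptation obtained by inspecting the proofs behind Theorem~\ref{thm:MLR-bdd-var}, and for the first half the point is exactly that Demuth's argument (or Bishop's upcrossing inequality) is uniform and yields a Martin-L\"of test covering the non-differentiability set from a code for $f$. The converse, however, has two genuine gaps as you have sketched it.

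First, your amplitude choice defeats the construction. A tent of height $\ell_{n,k}\cdot 2^{-n}$ on an interval of length $\ell_{n,k}$ has slopes of magnitude about $2^{1-n}$, so the oscillation it contributes to the difference quotients of $g$ at a point trapped in $U_n$ is $O(2^{-n})$, which tends to $0$; nothing then forces $\limsup$ and $\liminf$ of the difference quotients to disagree, and $g$ may perfectly well be differentiable on $\bigcap_n U_n$. To get non-differentiability the level-$n$ oscillation must \emph{not} vanish: either take heights comparable to $\ell_{n,k}$ (slopes bounded away from $0$ uniformly in $n$), or use the classical monotone function $f(x)=\sum_n\mu\bigl(U_n\cap[0,x]\bigr)$, whose lower right derivative at any $x\in\bigcap_n U_n$ is at least $n+1$ for every $n$ and hence $+\infty$. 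Second, the sentence ``computability is inherited from the computable presentation of the $I_{n,k}$'' is precisely the step that fails for a Martin-L\"of test and is the real content of the Brattka--Miller--Nies converse. Because $\mu(U_n)$ is only left-c.e., the tails of the sum over components have no computable modulus of convergence, so the natural candidates (the monotone function above, or your sum of tents once the heights are corrected) are only lower semicomputable --- ``interval-c.e.''\ in their terminology --- and converting such an object into a genuinely computable function of bounded variation with the same non-differentiability set is where the work lies. For a Schnorr test, where $\mu(U_n)$ is computable, this obstacle disappears, which is exactly the dividing line between this theorem and its Schnorr analogue (Theorem~\ref{thm:SR-bdd-var}). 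A smaller but real issue: the connected components of $U_n$ merge as intervals are enumerated, so ``the tent on the component $I_{n,k}$'' is not a stable computable object; you must first disjointify the listed rational intervals.
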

By relativizing the second part of this theorem, one gets the following
corollary.
\begin{cor}
\label{cor:null-set-bbd-var}For each null set $A\subseteq [0,1]$, there is a continuous function $f\colon[0,1]\rightarrow\mathbb{R}$
of bounded variation such that $A\subseteq\{x\in[0,1]:f\ \text{is not differentiable at}\ x\}$.
\end{cor}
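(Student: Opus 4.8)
The plan is to \emph{relativize} the converse (second) half of Theorem~\ref{thm:ML-null-set-bdd-var}. The crucial observation is that every classical null set is a Martin-L\"of null set relative to a suitable oracle. Concretely, let $A\subseteq[0,1]$ be null. By outer regularity, for each $n$ there is an open set $U_n\supseteq A$ with $\mu(U_n)\le 2^{-n}$. Writing each $U_n$ as a union of rational open intervals and packaging the resulting double sequence of rational endpoints into a single oracle $Z\in\{0,1\}^{\mathbb{N}}$, the sequence $(U_n)$ becomes a $Z$-computable sequence of (relatively) effectively open sets with $\mu(U_n)\le 2^{-n}$. Thus $(U_n)$ is a Martin-L\"of test relative to $Z$, and the containment $A\subseteq\bigcap_n U_n$ exhibits $A$ as a $Z$-Martin-L\"of null set.

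Next I would invoke the relativized form of the converse direction of Theorem~\ref{thm:ML-null-set-bdd-var}. The construction there, which produces a bounded-variation function from a Martin-L\"of null set, is uniform in the given test, so it relativizes verbatim: from (a $Z$-code for) the $Z$-Martin-L\"of test $(U_n)$ one obtains a $Z$-computable function $f\colon[0,1]\to\mathbb{R}$ of bounded variation with $A\subseteq\bigcap_n U_n\subseteq\{x\in[0,1]:f\ \text{is not differentiable at}\ x\}$. Since any $Z$-computable function on $[0,1]$ is in particular continuous, $f$ is the desired continuous function of bounded variation, and the corollary follows.

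The main point to verify is that the converse direction of Theorem~\ref{thm:ML-null-set-bdd-var} genuinely relativizes, i.e.\ that its proof nowhere exploits computability of the ambient data in an unrelativizable way; this is routine once one inspects the construction, since the passage from a test to the function $f$ is oracle-uniform and the resulting function depends only on the covering sets, not on their being computable without oracle. A secondary, entirely standard, point is the coding step: one must check that the countable data describing the covering open sets can be gathered into one oracle $Z$ relative to which the $U_n$ are uniformly effectively open and the bounds $\mu(U_n)\le 2^{-n}$ hold by construction, so that $(U_n)$ is literally a relativized Martin-L\"of test rather than merely a classical cover.
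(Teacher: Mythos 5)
Your proposal is correct and matches the paper's approach exactly: the paper derives the corollary in one line by relativizing the second part of Theorem~\ref{thm:ML-null-set-bdd-var}, which is precisely what you do, with the added (and accurate) detail of how outer regularity and the coding of the covering intervals into an oracle $Z$ turn an arbitrary null set into a $Z$-Martin-L\"of null set. Your observation that the resulting $Z$-computable function is in particular continuous correctly accounts for the weakening from ``computable'' to ``continuous'' in the corollary's statement.
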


What this logic tells us is that if we have a result which characterizes all Martin-Löf randoms (e.g.Theorem~\ref{thm:MLR-bdd-var}), it should relativize to a result 
(e.g.\ Corollary~\ref{cor:null-set-bbd-var}) which characterizes \emph{all null sets}.  The same
holds for Schnorr and computable randomness, or any other notion which deserves to be called a 
``randomness notion''.  

This allows us to instantly rule out some theorems as those which characterize 
randomness notions.  For example, the strong law of large numbers only characterizes a single null
set, namely the set of numbers which are not simply normal in base 2.  Therefore, there is no algorithmic 
randomness notion characterized by the strong law of large numbers.

A more interesting example is a theorem of Weyl.  Given a sequence of distinct integers $(a_n)$,
the set $\{a_n x\}_n$ is uniformly distributed modulo one for almost every $x\in [0,1]$ .  
Avigad \cite{Avigad:2013kx} defined a real $x \in [0,1]$ to be \emph{UD-random} if 
$\{a_n x\}_n$ is uniformly distributed modulo one for all computable sequences $(a_n)$ of distinct integers.  
However, Avigad noticed that there is a specific null set $C$ such that for every 
sequence $(a_n)$ of distinct integers (not necessarily computable), there is a real $x \in C$ where
$\{a_n x\}_n$ is uniformly distributed modulo one.  Hence it is impossible to use Weyl's theorem to
characterize null sets, and ``UD randomness'' is not a true notion of randomness (for the Lebesgue measure).

So far we have been talking about randomness relative to the Lebesgue measure.
It is possible that Weyl's theorem characterizes null sets for a different measure $\mu$ on $[0,1]$.
It is also possible that UD randomness is not associated with null sets for a single measure, 
but instead sets which are null for all measures in a family of measures (see, for example, 
\cite{Reimann:2008,Bienvenu.Gacs.Hoyrup.ea:2011}).  Indeed, every type of ``exceptional set'' in mathematics has its
own notion of effectively random-like objects.  For example, effective Cohen genericity corresponds to meager sets.  Kurtz randomness (which does not behave like a typical randomness notion) corresponds to subsets of null $F_\sigma$ sets (that is, a countable union of closed sets).  Such sets are both null and meager.

\subsection{Connections with constructive and reverse mathematics\label{subsec:Connections-with-constructive}}

These characterization theorems have a close connection to constructive
mathematics and reverse mathematics. (Some even call this approach
``reverse randomness'' because of the similarities.) For example,
consider the Lebesgue differentiation theorem.  It is constructive,
as shown by Bishop \cite[Ch.~8, Thm.~5]{Bishop:1967lq} 
and Demuth \cite[Thm.~4.4]{Demuth.Kucera:1979},
and it also holds of Schnorr randomness (Theorem~\ref{thm:SR-LDT}).
Conversely, the nonconstructive theorems such as the ergodic theorem
and the martingale convergence theorem do not hold for all Schnorr randoms
(Theorems~\ref{thm:MLR-ergodic} and \ref{thm:MLR-mart-conv}).
This is not a coincidence, but instead a fundamental connection
between Schnorr randomness and constructive mathematics.
\begin{namedthm}[Informal Principle 1]
Consider an a.e.~theorem $T$ of the form 
\begin{quote}
for all objects $a$, for almost every $x$, it holds that $P(x,a)$ 
\end{quote}
\noindent where ``almost every $x$'' is defined using the constructive
null sets of Brouwer, Demuth, or Bishop (recall that, under a computable interpretation, 
these are basically Schnorr null sets). If $T$ is constructively
provable, then $P(x,a)$
holds for all Schnorr randoms $x$ and all computable objects $a$.
\end{namedthm}
\begin{proof}[Informal justification]
Assume $T$ is constructively provable. Fix a computable $a$. From a constructive
proof of $T$ one can explicitly construct a Schnorr null set $N$,
for which if $P(x,a)$ does not hold then $x \in N$.  
Therefore, $P(x,a)$ holds for all Schnorr randoms $x$.
%The same holds
%if $T$ is provable in $\mathsf{RCA}_{0}$. In this case, one knows
%that the Schnorr null set $N$ is computable from $a$, and therefore
%computable since $a$ is computable.
\end{proof}

A common special case of the above principle is a.e.\ convergence, which
by Ergoroff's theorem is classically equivalent to almost uniform convergence
(when working in a probability space).  Most constructive proofs of a.e.\ convergence
proceed through almost uniform convergence.
\begin{namedthm}[Informal Principle 2]
Consider an almost uniform convergence theorem $T$ of the form 
\begin{quote}
given a sequence $(f_n)$ of uniformly continuous functions 
$f_n \colon [0,1] \to \mathbb{R}$, satisfying some property $P((f_n))$, 
then the sequence $(f_n)$ converges almost uniformly
\end{quote}
\noindent where ``almost uniformly'' is defined using the constructive
definitions of Brouwer, Demuth, Bishop, or Šanin 
(see Definitions~\ref{def:pointfree-au-conv} and \ref{def:MLR-SR-eff-au-conv}).
If $T$ is constructively provable, then $f_n(x)$
converges for all Schnorr randoms $x$ and all computable sequences of computable
functions $f_n$ such that $P((f_n))$ holds and is effectively realized.
\end{namedthm}
\begin{proof}[Informal justification]
Bishop constructively observed that if $(f_n)$ converges almost uniformly then $f_n(x)$ convergences 
for almost every $x$ \cite[p.~196]{Bishop:1967lq}.  The rest follows from Informal Principle 1.

An alternate justification is as follows.  From the constructive proof of $T$ and a realizer of $P((f_n))$ we can extract
a computable rate of almost uniform convergence.  From this, we can apply 
the result that an effective rate of almost uniform convergence is sufficient to
show convergence on Schnorr randoms. (This is due to Hoyrup, Rojas, Galatolo \cite[Theorem 1]{Galatolo.Hoyrup.Rojas:2010a} and  Rute \cite[Lemma~3.19, p.~41]{Rute:2013pd}.  
Also, see Lemma~\ref{lem:useful-conversions}(\ref{item:ua-implies-SR}).)
\end{proof}
\begin{rem}
We would like to regard these previous two results as informal recipes
for translating a constructive result into one about Schnorr randomness,
rather than true meta-theorems. The constructive systems of Bishop and
others are not given by formal axioms, making it difficult to truly
formalize this result. Also, there are small subtleties, such as what
it means for $a$ to be computable or $P((f_n))$ to be effectively realized, 
that are not worth considering here.

Also while it is convenient that the definition of null set used in, say,
Bishop's work is equivalent to that used by Schnorr, it is not strictly
necessary for the above results to hold.  Even Martin-Löf's constructive
theorems or the point-free theorems of Šanin can be used to extract
computable results about Schnorr randomness.  
See Section~\ref{sec:Foundations} for details on how to effectively
convert between the different definitions. 
\end{rem}
The converse of our informal principle is technically not true. For
example, consider the theorem that for every monotone sequence of
bounded continuous functions $g_{n}\colon[0,1]\rightarrow[0,1]$, the sequence
$g_{n}(x)$ converges for almost every $x$. This theorem is not constructive, but it is true that for every
computable sequence of computable functions $(g_{n})$, the sequence $g_{n}(x)$
converges for every Schnorr random $x$ (and indeed every $x\in[0,1]$).

Nonetheless, the converse of our informal principle seems to be ``true is spirit''.
The natural a.e.~theorems holding for Schnorr randomness \textemdash{}
the law of large numbers, the Lebesgue differentiation theorem, the
ergodic theorem for ergodic measures, etc.\ \textemdash{} are provable
in constructive mathematics.

Moreover, reverse-mathematics-type results seem to shed light on the
connections between Schnorr randomness and a.e.\ convergence theorems.
For example, the following are constructively equivalent for an increasing
sequence of nonnegative uniformly continuous functions $g_{n}\colon[0,1]\rightarrow[0,\infty)$
such that $\int_{0}^{1}g_{n}(x)\,dx$ is bounded (Bishop \cite[Ch.~7, Thm.~5]{Bishop:1967lq}). 
\begin{enumerate}
\item $\int_{0}^{1}g_{n}(x)\,dx$ converges.
\item $g_{n}$ converges almost uniformly.
\end{enumerate}
What is interesting about this result is the following connections
to Theorem~\ref{thm:SR-monotone}.
\begin{enumerate}
\item Using the second Informal Principle, one can use the forward 
direction of Bishop's result to get the forward direction of 
Theorem~\ref{thm:SR-monotone}.
\item Bishop's result suggests (but does not alone prove!)\ that one cannot remove the condition
that $\sup_{n}\int_{0}^{1}g_{n}(x)\,dx$ is computable from Theorem~\ref{thm:SR-monotone}. 
(We know this is true by Theorem~\ref{thm:MLR-monotone} along with the fact that
Schnorr randomness and Martin-Löf randomness are different.)
\item Bishop's result suggests (but does not prove!)\ that there is no other stronger
``reasonable hypotheses'' one can place on $\int_{0}^{1}g_{n}(x)\,dx$
in Theorem~\ref{thm:SR-monotone}. (This is too vague to be provable,
but it agrees with experience. While there are examples, as above,
where $g_{n}(x)$ converges on all Schnorr randoms and $\int_{0}^{1}g_{n}(x)\,dx$
is not computable, these seem contrived and unnatural.)
\end{enumerate}
It would be interesting to explore this connection more. For another
example, Spitters \cite[Thm.~16]{Spitters:2006a}
gave a constructive characterization for when ergodic averages converge.
This characterization aligns well with experience about Schnorr randomness
and the ergodic decomposition (Rute \cite[Thm.~10.2, p.~72]{Rute:2013pd},
also see Hoyrup \cite{Hoyrup:2013}).

As for the theorems which characterize Martin-Löf randomness \textemdash{}
Lebesgue's theorem for functions of bounded variation, the martingale
convergence theorem, and the ergodic theorem \textemdash{} these results
are all nonconstructive. (See Problems~3, 9, and 11 in Bishop~\cite[pp.~242--243]{Bishop:1967lq}
as well as various computability theoretic counterexamples \cite{Vyugin:2001a,BrattkaMillerNies16,Avigad.Gerhardy.Towsner:2010}.) 
Nonetheless, Bishop \cite[Ch.~8, \S3]{Bishop:1967lq}
showed that these three theorems can be made constructive by weakening
the conclusion but not the hypothesis. For these ``equal hypothesis
results'', Bishop used upcrossings. 

Pick two rationals $a<b$. A sequence of real numbers $(x_{n})$
has at least $k$ \emph{$(a,b)$-upcrossings} if there are indices
$m_{1}<n_{1}<\cdots<m_{k}<n_{k}$ such that $x_{m_{j}}<a<b<x_{n_{j}}$ 
for all $j \in \{1,\ldots, k\}$.
Classically, a bounded sequence converges exactly if for each pair
of rationals $a<b$, the number of $(a,b)$-upcrossings is bounded.
Therefore, a sequence of measurable functions $(f_{n})$ converges
almost everywhere if there is an upper bound on both $\int|f_{n}|\,d\mu$
and $\int U_{a,b}\,d\mu$ where $U_{a,b}(x)$ is the number of $(a,b)$-upcrossings
of $(f_{n}(x))$.

Doob's \emph{nonconstructive} proof of martingale convergence proceeded
via a \emph{constructive} proof of an upcrossing inequality bounding
$\int U_{a,b}\,d\mu$ \cite{Bishop:1966}. Bishop \cite[\S 8.3]{Bishop:1967lq}\cite{Bishop:1966,Bishop:1967a}, in turn,
gave constructive upcrossing inequalities for both the ergodic theorem
and Lebesgue's theorem concerning the differentiability of bounded
variation functions. Indeed, V'yugin \cite{Vyugin:1998} used the former
to prove that the ergodic theorem holds for Martin-Löf randoms (Theorem~\ref{thm:MLR-ergodic}).
Similarly, the latter can be used to give an alternate proof of Demuth's
result (Theorem~\ref{thm:MLR-bdd-var}) that Lebesgue's theorem holds
for Martin-Löf randomness.

%(It should be noted that upcrossing results are not limited to constructive
%mathematics. Indeed, quantitative analysis is often concerned with
%rates of convergence and bounds on upcrossings, variations, and jumps.
%Bishop's result was of interest in the ergodic theory community until
%it was superseded by even stronger results. Such quantitative results
%in classical analysis can be very useful for those working in algorithmic
%randomness and computable analysis.)

As for reverse mathematics, given the close connection between $\mathsf{WWKL}$
and Martin-Löf randomness, one may expect that theorems such as
the pointwise ergodic theorem are equivalent to $\mathsf{WWKL}$ over
$\mathsf{RCA}_{0}$. However, this depends on how one formalizes the theorem.  
In $\mathsf{RCA}_{0}$, there are two nonequivalent ways
to say that a sequence $(x_n)$ converges.  One way is to say that $\lim_n x_n$
exists.  Using this limit characterization of convergence,
Simic \cite{Simic:2007} showed that the pointwise ergodic theorem
is equivalent to $\mathsf{ACA}$ over $\mathsf{RCA}_{0}$.  (The main idea
is that there is a computable ergodic system whose limit is Turing equivalent to $\emptyset'$.)
The other characterization of convergence is to say that $(x_n)$ is Cauchy.
Using this Cauchy characterization of convergence (in the definition of differentiable),
Nies, Triplett, and Yokoyama \cite{Nies.Triplett.Yokoyama:2017}
showed that Lebesgue's theorem about the differentiability of functions of bounded
variation is equivalent to $\mathsf{WWKL}$ over $\mathsf{RCA}_{0}$.
It is natural to conjecture that for each of the pointwise ergodic theorem, 
the martingale convergence theorem, and Lebesgue's theorem about the differentiability of functions of bounded variation, that the ``limit'' version is equivalent to $\mathsf{ACA}$
and the Cauchy version is equivalent to $\mathsf{WWKL}$ over $\mathsf{RCA}_{0}$.

%Last, while we have mentioned many ways that constructive mathematics
%and reverse mathematics are connected to Schnorr and Martin-Löf randomness,
%there is little known about computable randomness. Given, Theorems
%\ref{thm:CR-bdd-var}, and \ref{thm:CR-lipschitz}, it
%is worth investigating this further.

Last, another way that constructive mathematics sheds light on algorithmic
randomness is via relativization. A real $x$ is \emph{Martin-Löf random relative to}
a real $y$, if $x$ is not contained in any Martin-Löf null set \emph{computable from}
$y$. While, at first, this may seem natural, it does not necessarily
agree with constructive mathematics. In constructive mathematics, when one
says that an object $A$ exists given another object $B$, one constructs
a uniformly computable function which takes (a code for) any such
object $B$ and returns (a code for) a corresponding object $A$.
This suggests, an alternative definition: A real $x$ is \emph{Martin-Löf random uniformly relative to}
a real $y$ if $x$ is not contained in any Martin-Löf null set \emph{uniformly computable from}
$y$ (see \cite{MiyabeRute13, Rute:2018} for formal definitions). 
While these two definitions agree for Martin-Löf randomness, they disagree for Schnorr randomness \cite{MiyabeRute13, Rute:2018}.  One needs to be cautious of this when
relativizing a result.  For example, the following is the correct (and most general) way to
relativize Theorem~\ref{thm:SR-monotone}. (This is actually the definition of relative Schnorr
randomness given in Rute \cite{Rute:2018}.)

\begin{thm}
\label{thm:SR-monotone-relative} For every oracle $a \in \mathbb{N}^\mathbb{N}$, the following are equivalent for a real $x\in[0,1]$.
\begin{enumerate}
\item The real $x$ is Schnorr random uniformly relative to $a$.
\item The supremum $\sup_{n}g^a_{n}(x)$ is finite for every increasing computable
sequence of continuous functions $g^a_{n}\colon[0,1]\rightarrow[0,\infty)$
uniformly computable in $a$ where $\int_{0}^{1}g^a_{n}(x)\,dx$ converges with a
rate of convergence uniformly computable in $a$.
\end{enumerate}
\end{thm}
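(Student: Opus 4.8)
The plan is to relativize Miyabe's Theorem~\ref{thm:SR-monotone} to the oracle $a$, taking care that every construction is carried out \emph{uniformly} in $a$, since this uniformity is precisely what the notion ``Schnorr random uniformly relative to $a$'' demands. Both implications will rest on the relativized form of the ``moreover'' clause of Theorem~\ref{thm:SR-monotone}, which identifies Schnorr null sets with sets of the form $\{x : \lim_n g_n(x) = \infty\}$; the only genuine content is checking that this identification survives relativization without losing uniformity.

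For the direction from (2) to (1) I would argue contrapositively. Suppose $x$ is not Schnorr random uniformly relative to $a$, so $x$ lies in a Schnorr null set $E$ uniformly computable from $a$. Applying the relativized ``moreover'' clause, I obtain a sequence $(g^a_n)$ of continuous functions, uniformly computable in $a$, whose integrals converge at a rate uniformly computable in $a$, with $E \subseteq \{y : \lim_n g^a_n(y) = \infty\}$. Since $x \in E$, we get $\sup_n g^a_n(x) = \lim_n g^a_n(x) = \infty$, contradicting (2).

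For the direction from (1) to (2), again contrapositively, suppose $(g^a_n)$ is an admissible sequence with $\sup_n g^a_n(x) = \infty$. I would build a Schnorr test uniformly computable from $a$ that captures $x$, by setting $U^a_k = \{y : \sup_n g^a_n(y) > c_k\} = \bigcup_n \{y : g^a_n(y) > c_k\}$ for thresholds $c_k$ chosen so that $\mu(U^a_k) \le 2^{-k}$. Markov's inequality against the limiting integral $M^a = \lim_n \int_0^1 g^a_n\,dy$ gives $\mu(U^a_k) \le M^a/c_k$, so $c_k = 2^k M^a$ suffices; here $M^a$ is computable from $a$ by hypothesis, and each $\{y : g^a_n(y) > c_k\}$ is effectively open uniformly in $a$, so $U^a_k$ is an increasing union of such sets. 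Since $\sup_n g^a_n(x) = \infty$, the point $x$ lies in every $U^a_k$, hence fails the test and is not Schnorr random uniformly relative to $a$.

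The main obstacle, and the reason the uniform relativization is the correct one, is verifying that $\mu(U^a_k)$ is computable uniformly in both $k$ and $a$, as a Schnorr test requires. The measure $\mu(U^a_k)$ is the increasing limit of $\mu(\{y : g^a_n(y) > c_k\})$, so a rate of convergence for this limit must be produced. The hypothesis that $\int_0^1 g^a_n\,dy$ converges with a rate uniformly computable in $a$ is exactly the data that bounds the tail $\int (g^a - g^a_n)\,dy$ and hence controls how much additional measure the sets can accumulate, yielding a convergence rate that is uniform in $a$. Without such a uniform rate one could still compute each $\mu(U^a_k)$ for fixed $a$, but not uniformly across oracles, which is precisely the way the ordinary (non-uniform) relativization of Schnorr randomness breaks down; choosing the thresholds $c_k$ to avoid level sets of positive measure, for instance by a small perturbation or by averaging over a range of thresholds, removes the remaining discontinuity and completes the uniform computation.
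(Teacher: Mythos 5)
Your proof is correct and follows essentially the same route as the paper: the paper's proof is a brief appeal to the second Informal Principle, extracting from Bishop's constructive monotone convergence theorem an algorithm that takes $a$, $(g^a_n)$, and the rate of convergence of $\int_0^1 g^a_n\,dx$ and outputs a uniformly-in-$a$ Schnorr null set covering $\{x:\lim_n g^a_n(x)=\infty\}$ --- which is exactly the Markov-inequality test construction you carry out explicitly --- while the converse direction rests in both cases on the uniform relativization of the ``moreover'' clause of Theorem~\ref{thm:SR-monotone}. The one step needing more care than you give it is certifying a computable rate for $\mu(U^a_k)$, but you have correctly identified both the obstacle and its standard resolution (perturbing or averaging the thresholds $c_k$ and using the uniformly computable rate for the integrals).
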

Just as with Theorem~\ref{thm:SR-monotone}, we could apply the second 
Informal Principle to construct a proof of this theorem.
Using Bishop's proof, we can extract an algorithm which takes as input an oracle $a$,
a function $a \mapsto (g^a_n)_n$, and a function which maps $a$ to a rate
of convergence for $\int_{0}^{1}g^a_{n}(x)\,dx$.  The output of this algorithm is (a code for) a
null set $E^a$ for which $\{x:\lim_{n}g^a_n(x)=\infty\}\subseteq E^a$.

Moreover, Schnorr randomness behaves much better under uniform computability 
\cite{FranklinStephan10, Miyabe11, MiyabeRute13, Rute:2018},
solving many of the perceived flaws of Schnorr randomness. (For example,
Porter \cite[\S\S\S10.4.2]{Porter:2012} cataloged the four main objections to Schnorr
randomness. Each of these can be fixed by replacing ``computable'' with
``uniformly computable''.)

\subsection{Further investigations in randomness and analysis\label{subsec:Further-investigations}}

Besides the aforementioned topics, there are questions that only make sense
in the context of randomness. Fouché \cite{Fouche:2008} showed that
if $\phi\colon[0,1]\rightarrow\mathbb{R}$ is a Martin-Löf random Brownian
motion path (also called a \emph{complex oscillation}) then $\phi(1)$
is Martin-Löf random. Hoyrup and Rojas \cite[\S\S5.3]{Hoyrup.Rojas:2009b}
showed the converse also holds in the sense that if $x$ is Martin-Löf
random, then $\phi(1)=x$ for some Martin-Löf random complex oscillation $\phi$.
Rute \cite[Ex.~9.6]{Rute:2018} showed that this is true of Schnorr randomness as well. 
There are many more such results in randomness, e.g. \cite{Diamondstone.Kjos-Hanssen:2012,Porter.Culver:2015}. 
This is especially true in probability
theory, where one quickly passes between multiple representations of
the same object. A random sequence of independent fair coin tosses can be used to construct
a uniform random variable on $[0,1]$, a random walk on the integers,
a random graph, a random percolation model, and a number of other
random objects. It is important to know that randomness on one space
is (in some sense) equivalent to randomness on another. 

Four basic tools have been developed for this purpose. With the correct 
definitions, these theorems hold for both Schnorr and Martin-Löf randomness
(and often, but not always, hold for computable randomness).  Since the details
are a bit technical, we state them here vaguely with citations to the full 
theorems.\footnote{For the reader wishing to connect these results 
with Section~\ref{sec:Foundations}, we remark that when we say
``sufficiently effectively measurable'' it is sometimes sufficient for the map
$f\colon(\Omega,\mathbb{P})\rightarrow X$ to be Brouwer/Schnorr effectively 
measurable as in Definition~\ref{def:MLR-SR-eff-meas-fun}.  Other times
one must also require that the conditional probability map $x \mapsto \mathbb{P}(\cdot \mid f=x)$
be Brouwer/Schnorr effectively measurable as well.}
\begin{itemize}
\item (Randomness conservation) If $f\colon(\Omega,\mathbb{P})\rightarrow X$ is ``sufficiently effectively
measurable'' and $\omega\in\Omega$ is $\mathbb{P}$-random, then
$f(\omega)$ is random for the push-forward measure $\mathbb{P}_{f}$
(given by $\mathbb{P}_{f}(A)=\mathbb{P}(f^{-1}(A))$) \cite[Thms.~3.2, 4.1]{BienvenuPorter12},\cite[Prop.~5]{HoyrupRojas09}\cite{Rute16}\cite[Prop.~9.2]{Rute:2018}\cite[Thm.~2]{Bienvenu.Hoyrup.Shen:2017}.
\item (No randomness from nothing)  If $f\colon(\Omega,\mathbb{P})\rightarrow X$ is ``sufficiently effectively
measurable,'' and $x\in X$ is $\mathbb{P}_{f}$-random, then $x=f(\omega)$
for some $\mathbb{P}$-random $\omega\in\Omega$ \cite[Thm.~3.5]{BienvenuPorter12},\cite[Prop.~5]{HoyrupRojas09}\cite[Thm.~7]{Rute16}\cite[Cor.~9.5]{Rute:2018}\cite[Thm.~2]{Bienvenu.Hoyrup.Shen:2017}.
\item If $(\Omega,\mathbb{Q})$ is ``sufficiently effectively absolutely
continuous'' with respect to $(\Omega,\mathbb{P})$\footnote{
The measure $\mathbb{Q}$ is absolutely continuous
with respect to $\mathbb{P}$ if every $\mathbb{P}$-null set is $\mathbb{Q}$-null.} %
and $\omega\in\Omega$
is $\mathbb{Q}$-random, then $\omega$ is $\mathbb{P}$-random \cite[\S5.3]{HoyrupRojas09}\cite[footnote 3]{Rute16}.
\item (Van Lambalgen's theorem and its generalizations) Given $(\Omega_{1}\times\Omega_{2},\mathbb{P})$, the pair $(\omega_{1},\omega_{2})\in\Omega_{1}\times\Omega_{2}$
is $\mathbb{P}$-random if and only if $\omega_{1}$ is $\mathbb{P}_{1}$-random 
and $\omega_{2}$ is $\mathbb{P}(\cdot\mid\omega_{1})$-random
relative to $\omega_{1}$, assuming that $\mathbb{P}$ can be ``effectively
decomposed'' into the projection measure $\mathbb{P}_{1}$ on $\Omega_{1}$
and the family of conditional probabilities $\omega_{1}\mapsto\mathbb{P}(\cdot\mid\omega_{1})$
on $\Omega_2$ \cite{Takahashi:2008,Takahashi:2011a}\cite[Thm.~4]{Bauwens:2017}\cite[Thm.~5]{Bauwens.Shen.Takahashi:2017}\cite[Thm.~8.2]{Rute:2018}.  

A special case of this is where $\mathbb{P}$ is the product of two
independent measures $\mathbb{P}_1 \otimes \mathbb{P}_2$.  In this case, $(\omega_1,\omega_2)$ is $\mathbb{P}$-random if and only if $\omega_1$ is $\mathbb{P}_1$-random and $\omega_2$ is $\mathbb{P}_2$-random relative to $\omega_1$.  \cite[Thm.~6.9.1]{Downey.Hirschfeldt:2010}\cite[Thm.~3.4.6]{Nies:2009}\cite{MiyabeRute13,Miyabe11}.
\end{itemize}
For those interested in learning more about these new directions in
algorithmic randomness (at least with respect to Martin-Löf randomness),
we recommend Gács \cite{Gacs:aa}, Bienvenu, Gács, Hoyrup, Rojas,
and Shen \cite{Bienvenu.Gacs.Hoyrup.ea:2011}, Hoyrup and Rojas \cite{Hoyrup.Rojas:2009,Hoyrup.Rojas:2009a,Hoyrup.Rojas:2009b},
and Allen, Bienvenu, and Slaman \cite{Allen.Bienvenu.Slaman:2014}.
For computable randomness, see Rute \cite{Rute:,Rute16}.  For Schnorr
randomness, see \cite{Rute:2018, Rute:2013pd}.

\section{Randomness and the foundations of computable measure theory\label{sec:Foundations}}

We saw in Section~\ref{sec:Constructive-survey} that constructive
measure theory has been developed through a number of different constructive
and computable traditions \textemdash{} each tradition using slightly
different definitions, terminology, and techniques. This nonlinear
development, unfortunately, gives the outsider (and even the insider)
the appearance that ``a systematic general framework for computability 
in measure and integration theory still remains in its infancy'' \cite{Edalat:2009nx}.
This is far from the case.

In this section, we give a short presentation on the foundations of
computable measure theory. Our presentation shows that, while there
are many approaches to constructive/computable measure theory, they
are basically equivalent. One piece of evidence for this is that the
definitions of measurable set, measurable function, integrable function,
and almost uniform convergence in the computable and 
constructive literature basically agree.
Specifically, most definitions fall into three categories:
\begin{enumerate}
\item Point-free definitions.
\item Definitions which are well-defined outside of a Martin-Löf null set.
\item Definitions which are well-defined outside of a Schnorr null set.
\end{enumerate}
Moreover, all three categories are equi-computable, in the sense that, given
a computable object of one type, one can uniformly compute an equivalent
object of another type. As the descriptions of these categories suggest,
Martin-Löf and Schnorr randomness naturally arise out of these definitions
(although, in most cases there was no mention of randomness in the
original definitions).  (For simplicity, we only focus on whether our definitions are
\emph{computably} equivalent, ignoring whether they are \emph{constructively} equivalent.)

\subsection{Computable metric spaces and computable topology\label{subsec:Computable-metric-topology}}

To do computable analysis, one needs a good notion of a computable
space. The early constructivists restricted their work to Euclidean
space $\mathbb{R}^{d}$ or Cantor space $\{0,1\}^{\mathbb{N}}$. Later
work gradually incorporated compact and locally compact metric spaces,
separable Banach spaces, complete separable metric spaces, and finally
a wide variety of topological and abstract spaces. 

For this presentation, we will use \emph{complete separable metric spaces}
(also known as \emph{Polish spaces}). These spaces are sufficiently rich,
but still easy to work with. (Most random variables in probability
theory, for example, takes values in a complete separable metric space.)
\begin{defn}
A \emph{computable metric space} $X$ is a triple $(X,\rho,A)$ where 
$(X, \rho)$ is a complete seperable metric space, and $A\subseteq X$ is a dense 
indexed set $\{a_i\} \subseteq X$ (possibly with repetition) such that 
$i,j\mapsto \rho(a_i, a_j)$ is computable.  A point $x\in X$ is \emph{computable}
if there is a computable sequence $(i_n)$ such that for all $m < n$, 
$\rho(a_{i_n}, a_{i_m}) < 2^{-m}$ and $x = \lim_n a_{i_n}$. The sequence $(i_n)$
is called the \emph{Cauchy name} of $x$.

The \emph{effectively open sets} of $X$ are
computable sets of the form $U = \bigcup_i B(x_i, r_i)$ where $(x_i)_i$ is a 
computable sequence of points in $A$, $(r_i)_i$ is a computable sequence
of positive rationals, and $B(x_i, r_i) = \{x \in X : \rho(x,x_i) < r_i\}$.  
The \emph{effectively closed sets} are the complements of effectively open sets.

A partial map $f \colon D \subseteq X \to Y$ (where $Y$ is a computable metric space) is 
$\emph{computable}$ if there is a partial computable map 
$\Phi \colon \mathbb{N}^\mathbb{N} \to \mathbb{N}^\mathbb{N}$
which takes every $X$-Cauchy name for every $x \in D$ to a $Y$-Cauchy name of $f(x)$.
\end{defn}

\subsection{Computable measure spaces\label{subsec:Computable-measure-spaces}}
The set theoretic concept of a measure is so general that it is difficult to
distill it down to a computably representable form.  There are a few generally 
accepted approaches to do this.  One approach, which is simple and elegant, 
is to divorce the measure from the underlying topological structure 
of the space.  Any measure whose $\sigma$-algebra is countably generated can be 
represented with this approach.\footnote{There are two senses in which the 
sigma-algebra $\mathcal{A}$ of a measure $\mu$ is generated by a countable 
family of sets $\mathcal{R}$.
In a set theoretic sense, $\mathcal{A}$ is the minimum $\sigma$-algebra extending 
$\mathcal{R}$.  In a measure theoretic sense, $\mathcal{A}$ is the minimum 
$\mu$-complete sigma-algebra extending $\mathcal{R}$. That is $\mathcal{A}$ 
contains all $\mu$-null sets.  Since measure theory is normally ``up to a null set''
the differences are negligible.  However, for concreteness, when we say 
$\mathcal{R}$ generates $\mathcal{A}$, we mean the latter.  
When we speak later about Borel measures,
we will mean the completion of a Borel measure.} % 
Recall, that a \emph{ring of sets} is a collection $\mathcal{R}$ of 
subsets of $X$ closed under union, intersection, empty set, and 
set difference.  If $X \in \mathcal{R}$, then $\mathcal{R}$ is a 
\emph{Boolean algebra}.  We say that a countable ring 
$\mathcal{R} = \{R_i\}$ is \emph{computable} if the index of 
$R_i \operatorname{\square} R_j$ is uniformly computable from $i$ and $j$ 
for $\square \in \{\cup, \cap, \smallsetminus\}$.

\begin{defn}\label{def:comp-meas-space} A \emph{computable $\sigma$-finite measure space} is a tuple
$(X,\mathcal{A},\mathcal{R}, \mu)$ where $\mathcal{R}=\{R_i\}$ is a computable 
ring of $X$ which
generates the $\sigma$-algebra $\mathcal{A}$ on $X$ and $i \mapsto \mu(R_i)$ is 
computable.  A \emph{computable finite measure space} is a computable 
$\sigma$-finite measure space $(X,\mathcal{A},\mathcal{R}, \mu)$ where $\mathcal{R}$ is a Boolean algebra of $X$.  
A \emph{computable probability space} is a \emph{computable finite measure space}
where $\mu(X) = 1$.
\end{defn}
This is the definition of Wu and Weihrauch \cite{Wu.Weihrauch:2006}.  
Also, there is no loss in loosening the Boolean operations on $\mathcal{R}$ 
up to $\mu$-a.e.\ equivalence. For example, if $R, S \in \mathcal{R}$, then 
we only require that there is a set $T \in \mathcal{R}$ such that $S \cup R = T$ $\mu$-a.e.
The fair-coin probability measure on $\{0,1\}^\mathbb{N}$ is computable with 
the Boolean algebra of cylinder sets.  The Lebesgue measure on $\mathbb{R}$ is
similarly computable with the ring of half-open rational intervals $(a,b]$.

Following Coquand and Palmgren \cite{Coquand.Palmgren:2002}, 
one can make this definition completely point-free by replacing the ring \emph{of sets} 
$\mathcal{R}$ with any countable \emph{algebraic} Boolean ring (without a unit) and the 
measure $\mu$ \emph{on sets} with a measure \emph{on the ring}.\footnote{A 
\emph{Boolean ring} is a commutative ring where $x^2 = x$.  
Ring multiplication and addition correspond to intersection and symmetric difference.
Union $x \cup y$ corresponds to $x + y + xy$.
A \emph{measure} $\mu$ on a ring $\mathcal{R}$ is a nonnegative function 
$\mu\colon \mathcal{R} \to [0,\infty)$ satisfying 
$\mu(x \cup y) = \mu(x) + \mu(y) - \mu(x \cap y)$ and $\mu(0) = 0$.} %
Similarly, a Boolean algebra \emph{of sets} is replaced with 
\emph{an algebraic} Boolean algebra, that is a Boolean ring with a unit $1$.
Another formal, point-free approach, based on the Danielle integral, 
was used by Bishop and Cheng \cite{Bishop.Cheng:1972,Bishop.Bridges:1985}.
Coquand and Palmgren \cite{Coquand.Palmgren:2002} and
Wu and Weihrauch \cite{Wu.Weihrauch:2006} showed that one can effectively
translate between the Danielle integral approach and the ring approach.

For simplicity, we will focus only on probability measure spaces $(X,\mathcal{A},\mathcal{R}, \mu)$, 
with an occasional footnote on 
finite and $\sigma$-finite measures.\footnote{The key observation of computable finite 
measures is that, with the exception of the zero measure, they are computable
probability measures scaled by a computable real.  The key observation of computable
$\sigma$-finite measures is that there is a computable partition $X_n$ of disjoint ring elements such 
that $X = \bigcup_{n = 0}^\infty X_n$ $\mu$-a.e., $\mu(X_n) > 0$, and the map $i \mapsto m$ such that 
$R_i \subseteq \bigcup_{n=0}^{m-1} X_n$ $\mu$-a.e. is computable \cite{Weihrauch.Tavana:2014}.  
Therefore a computable $\sigma$-finite measure space is just a disjoint union of uniformly computable 
finite measure spaces $(X_n, \mathcal{A}_n, \mathcal{R}_n, \mu_n)$.  Write $\mu = \sum_n \mu_n$.} % 
Also, because these spaces do not have a topology, we cannot define
algorithmic randomness in the usual way.  We will show in Subsection~\ref{subsec:Forcing}
that one can still define Schnorr randomness for computable measure spaces 
via ``effectively generic ultrafilters.''

\subsection{The point-free approach to computable measure theory\label{subsec:Point-free-approach}}

Assume that $(X,\mathcal{A},\mathcal{R}, \mu)$ is a computable probability space 
and $Y$ is a computable metric space. Many of the objects of measure theory can
be described in a point-free way as points in a computable metric
space. As we discussed in Subsections~\ref{subsec:Russians} and \ref{subsec:Point-free}, 
this approach goes back to Šanin \cite{Sanin:1968dq} and has been developed by many others.
\begin{itemize}
\item The space $\mathrm{MSet}(X,\mu)$ of $\mathcal{A}$-measurable sets
(modulo a.e.~equivalence) is a computable metric space under the
metric $\rho(A,B)=\mu(A\triangle B)$. Call the computable points
in this space \emph{point-free effectively measurable sets}.\footnote{For 
a $\sigma$-finite measure space, $\rho(A,B)=\mu(A\triangle B)$ is a metric for the space of
finitely measurable sets.  The space of all measurable sets is given by the metric
$\rho(A,B) = \sum_n 2^{-n} \min\{1, \rho_{\mu_n}(A, B)\}$ where $\mu = \sum_n \mu_n$
as in the previous footnote.} %
\item The space $L^{0}(X,\mu)$ of measurable functions $f\colon (X,\mu)\rightarrow\mathbb{R}$
(modulo a.e.~equivalence) is a computable metric under the following
metric which describes convergence in measure.%
\footnote{This is just one of many computably equivalent metrics, also including the metric 
$\rho(f,g)  =\int\min\{|f-g|,1\}\,d\mu$ and the Ky-Fan metric.  
Again, for $\sigma$-finite measurable spaces, use the metric 
$\rho(f,g) =\sum_n 2^{-n} \min \{1, \rho_{\mu_n}(f,g)\}$.}
\begin{align*}
\rho(f,g) & =\int\frac{|f-g|}{1+|f-g|}\,d\mu
\end{align*}
Call the computable points in this space \emph{point-free effectively measurable functions}.
\item Similarly, the space $L^{0}(X,\mu;Y)$ of measurable functions $f\colon (X,\mu)\rightarrow Y$
(modulo a.e.~equivalence) is a computable metric under the following
metric (where $d_{Y}$ is the metric of $Y$).
\begin{align*}
\rho(f,g) & =\int\frac{d_{Y}(f,g)}{1+d_{Y}(f,g)}\,d\mu
\end{align*}
Call the computable points in this space \emph{point-free effectively measurable functions}
from $(X,\mu)$ to $Y$.
\item The space, $L^{p}(X,\mu)$ of $p$-integrable functions (modulo a.e.~equivalence)
for computable $1\leq p<\infty$ is a computable metric space under
the metric
\[
\rho(f,g)=\|f-g\|_{L^p}=\left(\int|f-g|^{p}\,d\mu\right)^{1/p}.
\]
We will call computable points in this space \emph{point-free effective $L^{p}$ functions}
or \emph{point-free effectively integrable functions} when $p=1$.\footnote{For 
$\sigma$-finite measures $\mu = \sum_n \mu_n$, there is also a space of 
locally $p$-integrable functions given by
the metric $\rho(f,g) = \sum_n 2^{-n} \min\{1,\|f-g\|_{L^p(\mu_n)}\}$.}
\end{itemize}
\begin{rem}
We have not yet mentioned which countable dense set to use for
each metric space. For measurable sets, use the Boolean algebra $\mathcal{R}$.  For
$L^0$ and $L^p$, use the set of rational step functions 
$\sum_{i=0}^{n-1} q_i \mathbf{1}_{R_i}$ where $R_0, \ldots, R_{n-1} \in \mathcal{R}$
is a partition of $X$ and $q_i \in \mathbb{Q}$.  For the $Y$-valued measurable functions,
use the same idea with the dense set $A$ of the computable metric space 
$Y$ taking the place of the rationals.

These above point-free definitions are equal to many others in the
literature. We list a few which are easily deducible from the definitions.
\end{rem}
\begin{itemize}
\item A set $A$ is point-free effectively measurable if and only if the
characteristic function $\mathbf{1}_{A}$ is point-free effectively
measurable (or point-free effectively $L^{p}$ or any computable $p$) 
\cite[Prop.~3.24, p.~41]{Rute:2013pd}.
\item A measurable function $f\colon(X,\mu)\rightarrow Y$ is point-free
effectively measurable if and only if for every effectively open set $U \subseteq X$,
there is a sequence of effectively measurable sets $A_0, A_1, \ldots$ 
(computable uniformly from the index of $U$) such that $f^{-1}(U) = \bigcup_i A_i$ $\mu$-a.e.
(This is basically the representation $\delta_\textrm{mfo}$ of \cite[Thm.~5.4]{Weihrauch:2017}.  
Also see Subsection~\ref{subsec:Computable-measures}.)
\item An $L^{p}(X,\mu)$ function $f$ (for computable $p\geq1$) is point-free
effectively $L^{p}$ if and only if $f$ is point-free effectively
measurable and $\|f\|_{L^p}$ is finite and computable
\cite[Prop.~3.20, p.~41]{Rute:2013pd}.
\item A bounded measurable function $f\colon (X,\mu)\rightarrow[0,1]$ 
is point-free effectively measurable if and only if $f$ is point-free effectively $L^{p}$
for any (and hence all) computable $p\geq1$
\cite[Prop.~3.20, p.~41]{Rute:2013pd}.
\end{itemize}
See Spitters \cite[Ch.~3]{Spitters:2002}\cite{Spitters:2006} for a 
modern constructive  treatment of this metric approach.
Moreover, Ko \cite[\S5.1]{Ko:1991} has given descriptions
of these classes via ``recursively approximable sets'' and ``recursively
approximable functions''. He also gave a characterization of the
effectively measurable functions via effective convergence in measure
\cite[Cor.~5.13]{Ko:1991} (see also Rute \cite[Prop.~3.15]{Rute:2013pd}). 
Edalat \cite{Edalat:2009nx}
gave a slightly different, but equivalent, characterization of bounded
measurable functions via interval-valued functions. 

Notice that in our point-free framework there is only one null set,
namely the equivalence class of the empty set. Even with such a limited
definition of ``null set,'' many almost everywhere
results can still be described in this framework. For example, two
sets $A$ and $B$ are a.e.\ equal if $\mu(A\triangle B)=0$. Also
a.e.\ convergence, while not a metrizable (or even topological) convergence,
can be defined within a point-free framework as follows by using effective
almost uniform convergence.

\begin{defn}
\label{def:pointfree-au-conv}A computable sequence of $Y$-valued, point-free $\mu$-effectively
measurable functions $(f_{k})_{k\in\mathbb{N}}$ converges \emph{point-free effectively almost uniformly}
to a function $f$ if there is a computable \emph{rate of almost uniform convergence}
$K\colon \mathbb{N} \times \mathbb{N} \to \mathbb{N}$ 
such that for all $n$,
\begin{equation}\label{eq:point-free-au}
\mu\{x\in X:\forall m\ \exists k > K(m, n)\ d_Y(f_{k}(x),f(x))>2^{-m}\}\leq 2^{-n}.
\end{equation}
\end{defn}

This definition was considered a constructive or effective version of 
\emph{almost everywhere (or almost sure) convergence} by Kosovski\u{\i} \cite{Kosovskii:1973a}, 
V'yugin \cite{Vyugin:1997}, Coquand and Palmgren \cite{Coquand.Palmgren:2002}
as well as many others.\footnote{Some authors use different but effectively 
equivalent definitions, e.g.\ replacing (\ref{eq:point-free-au}) with $\forall m\ \mu\{x\in X:\exists k > K(m, n)\ d_Y(f_{k}(x),f(x))>2^{-m}\}\leq 2^{-n}.$}
Recall, that if a sequence of functions converges almost uniformly, then it converges
almost everywhere. Egoroff's theorem says that the converse holds
for a probability space. However, Egoroff's theorem is not constructive.\footnote{Egoroff's theorem holds in Brouwer's measure theory because of the
fan principle \cite[\S\S 6.5.4]{Heyting:1956}. Bishop \cite[Ch.~7, Theorem 4]{Bishop:1967lq}
on the other hand, modified the definition of almost everywhere convergence,
making Egoroff's theorem trivial. Kosovski\u{\i} \cite[2.5.1]{Kosovskii:1970} 
gave a constructive counterexample to Egoroff's theorem, 
and Avigad, Dean, and Rute \cite{Avigad.Dean.Rute:2012}
show that Egoroff's theorem is equivalent to 2-$\mathsf{WWKL}$ over
$\mathsf{RCA}_{0}$.}  For that reason (and also the reason that Egoroff's theorem fails
for the convergence of continuously indexed families of functions 
$(f_{t})_{t\in[0,\infty)}$ as $t\to \infty$),
we choose to call this almost uniform convergence.\footnote{Ergoroff's 
theorem also fails, in general, for $\sigma$-finite measures. However,
one can easily develop a notion of ``local almost uniform convergence'' 
(and its effective analogue) which is classically equivalent to a.e.\ convergence.} 

Also note that the above definition of almost uniform convergence
is ``point-free'' in the sense that $\{x\in X:\forall m\ \exists k > K(m, n)\ d_Y(f_{k}(x),f(x))>2^{-m}\}$
is $\mu$-almost everywhere equal to $\{x\in X:\forall m\ \exists k > K(m, n)\ d_Y(g_{k}(x),g(x))>2^{-m}\}$
for any sequence $(g_{k})$ which is $\mu$-a.e.\ equal to $(f_{k})$ 
and any $g$ which is $\mu$-a.e.\ equal to $f$.

\subsection{Computable measures on computable metric spaces\label{subsec:Computable-measures}}

While the definition of a computable measure space in Definition~\ref{def:comp-meas-space} 
is both general and elegant, it requires imposing an arbitrary ring structure on the space,
effectively treating the space as zero-dimensional.
Now we will consider an alternative definition which
preserves the topological and metric structure of computable metric space $X$, 
while also inducing a computable metric structure on the space of probability measures on $X$.
For the majority of probability theory it is sufficient to work with Borel
probability measures on a Polish space.  For analysis, it is also common to work
with locally finite Borel measures on locally compact Polish spaces.  
Again, we will focus on the probability measure case, with an occasional footnote 
about locally finite measures.\footnote{Recall that a locally compact 
Polish space is the same as a locally compact second-countable Hausdorff space.  
A locally finite measure is one in which every point is contained in a neighborhood 
of finite measure.  For Borel measures on locally compact Polish spaces, locally-finite
measures are equivalent to $\sigma$-finite measures.  These are also called Radon measures.} %

If $X$ is a computable metric space, then the space $\mathcal{M}_{1}(X)$
of Borel probability measures on $X$ is a computable metric space
under the Levy-Prokhorov metric or the Wasserstein metric. (For the
Wasserstein metric, one must first modify $X$ to be a bounded metric
space.) The \emph{computable probability measures} $\mu\in\mathcal{M}_{1}(X)$
are the computable points in this metric space.\footnote{There
are also metrics one can use for the space $\mathcal{M}_\textnormal{loc}(X)$
of locally finite measures, e.g.\ Kallenberg~\cite[\S 15.7]{Kallenberg:1983}.} %
Equivalently, the computable probability measures can be described as follows.
\begin{enumerate}
	\item By an effective version of the Reisz representation theorem, $\mu\in\mathcal{M}_{1}(X)$
	is computable if and only if $f\mapsto\int f\,d\mu$ is a computable
	operator on bounded computable functions $f\colon X\rightarrow[0,1]$ \cite[Cors.~4.3.1, 4.3.2]{Hoyrup.Rojas:2009}.\footnote{For
	locally finite measures $\mu \in \mathcal{M}_\textnormal{loc}(X)$, $(f,K)\mapsto\int f\,d\mu$ 
	is a computable operator on pairs of computable functions $f\colon X\rightarrow[0,1]$ 
	and effectively compact sets $K$ such that $\operatorname{supp} f \subseteq K$.  
	See Bishop \cite[Ch.~6]{Bishop:1967lq}.}
	\item Using valuation theory, $\mbox{\ensuremath{\mu}}\in\mathcal{M}_{1}(X)$
	is computable if and only if $U\mapsto\mu(U)$ is a lower semicomputable
	operator on effectively open sets \cite[Thm.~4.2.1]{Hoyrup.Rojas:2009}.\footnote{For
    finite measures $\mu \in \mathcal{M}(X)$, one also needs $\mu(X)$ to be computable. 
    For locally finite measures $\mu \in \mathcal{M}_\textnormal{loc}(X)$, 
    see, for instance, Edalat~\cite{Edalat:2009nx}. } %
    (For Cantor space, this is equivalent to the map $\sigma \mapsto \mu([\sigma])$ being
    computable where $\sigma \in \{0,1\}^{<\mathbb{N}}$.)
\end{enumerate}
All of these approaches give the space $\mathcal{M}_{1}(X)$ the topology
of \emph{weak convergence}. For more on computable measures, see 
Schröder \cite{Schroder:2007kx} and Hoyrup and Rojas \cite{Hoyrup.Rojas:2009}.
For a constructive, point-free treatment of integral operators
and valuations, see Coquand and Spitters \cite{Coquand.Spitters:2009}.

Every computable probability space $(X, \mathcal{A}, \mathcal{R}, \mu)$ is
isomorphic to the computable measure $\nu$ on Cantor space $\{0,1\}^\mathbb{N}$
given by
\[\nu([\sigma]) = \mu\left(\bigcap_{\substack{i<|\sigma|\\\sigma(i)=1}} R_i\ \cap \ \bigcap_{\substack{i<|\sigma|\\ \sigma(i)=0}} R_i^c\right)\]
where $\mathcal{R} = \{R_i\}$.\footnote{
Similarly, every computable $\sigma$-finite measure space is isomorphic to a
measure on the locally compact space $\mathbb{N} \times \{0,1\}^\mathbb{N}$.} %

Conversely, given a computable probability measure $\mu$ on a computable metric space $X$, 
there is a computable sequence of radii $r_i > 0$, dense in $[0,1]$, such that $\mu\{d_X(x,a_i)=r_i\} =0$
for the dense set $A = \{a_j\}$ used to generate $X$.  In this way, the balls $B(a_j,r_i)$ 
form a basis of $X$ and $i,j \mapsto \mu(B(a_j,r_i))$ is computable.  The space
$(X,\mathcal{A},\mathcal{R},\mu)$ is a computable measure space where
$\mathcal{R}$ is the free Boolean algebra generated by these balls and $\mathcal{A}$
is the ($\mu$-completion of the) Borel sigma-algebra of $X$.\footnote{A 
similar construction can be done
for the locally finite measures on effectively locally compact computable metric spaces.  
This is basically the idea of Bishop's theory of profiles \cite[Ch.~6]{Bishop.Bridges:1985}.  
In this way we can
construct a ring $\mathcal{R}$ of open sets of computable measure which generates
the corresponding $\sigma$-finite measure space.} %
In this way we can extend
all the point-free definitions of the previous subsection to computable metric spaces with 
computable probability measures.

We can also now talk about the \emph{pushforward measure} $\mu_{f} \in \mathcal{M}_1(Y)$
of a measurable function $f:(X,\mu) \to Y$ given by $\mu_{f}(A)=\mu(f^{-1}(A))$.
This provides yet another characterization of point-free effectively measurable functions.
A function $f\colon(X,\mu)\rightarrow Y$ is point-free effectively measurable 
if and only if $\mu_f$ is computable
and the map $A\mapsto f^{-1}(A)$ is a computable map of type 
$\mathrm{MSet}(Y,\mu_{f})\rightarrow\mathrm{MSet}(X,\mu)$
\cite[Prop.~3.30, p.~43]{Rute:2013pd}.

\subsection{Two pointwise approaches\label{subsec:Pointwise-approaches}}

While the point-free approach is elegant, it is noticeably different from
classical measure theory, where a measurable function is actually
a function and a measurable set is actually a set. Also, there is
a certain conceptual advantage to thinking about functions as algorithms
which take a point in one space and assign it to a value in another
space.

There are two similar, but different pointwise variants of measure
theory in the constructive/computable literature. The first we will
call the \emph{Brouwer/Schnorr variant}, because it was the approach
used by Brouwer \cite{Brouwer:1919a,Heyting:1956} and it implicitly
uses Schnorr null sets. This variant is equivalent to approaches used
by Demuth \cite{Demuth.Kucera:1979}, Bishop \cite{Bishop:1967lq,Bishop.Cheng:1972,Bishop.Bridges:1985},
Pathak, Rojas, and Simpson \cite{Pathak.Rojas.Simpson:2014}, Rute
\cite{Rute:2013pd}, and Miyabe \cite{Miyabe:2013uq}. The second
variant we will call the \emph{Martin-Löf variant} since it was
used by Martin-Löf \cite{Martin-Lof:1970a} and implicitly uses Martin-Löf
null sets. This variant is equivalent to approaches given by Edalat
\cite{Edalat:2009nx}, Yu \cite{Yu:1994oz}, Brown, Giusto, and Simpson
\cite{Brown.Giusto.Simpson:2002}, Pathak \cite{Pathak:2009vn}, and
Hoyrup and Rojas \cite{Hoyrup.Rojas:2009a}. 

Assume $X$ and $Y$ are computable metric spaces and $\mu\in\mathcal{M}_{1}(X)$
is a computable measure.  In the previous subsection, we saw there is a countable 
Boolean algebra $\mathcal{R}$ of effectively open sets of computable measure which generates
this measure space.\footnote{Since
each set is open, by ``complement'' in $\mathcal{R}$ we mean the interior of
the complement.  This is acceptable, since, by construction, the boundary 
of each set in $\mathcal{R}$ is null.} %
The \emph{basic sets} are the elements of this Boolean algebra, where as the 
\emph{basic functions} $g\colon (X,\mu) \to Y$ are the step functions of the form
$g(x)=a_i$ if $x\in R_i$ where $R_0,\ldots,R_{n-1} \in \mathcal{R}$ is a 
finite partition of $\mathcal{R}$ and each $a_i$ is from the dense set generating $Y$.
(The basic functions are partial computable since we don't include the boundaries of the sets $R_i$.)
\begin{defn}
\label{def:MLR-SR-eff-meas-set}For a set $Q\subseteq X$,

\begin{itemize}
\item $Q$ is \emph{Martin-Löf effectively measurable} if there is a
computable sequence of basic sets $(R_{n})$ and a computable sequence
$(U_{n})$ of effectively open sets such that $\mu(U_{n})\leq2^{-n}$
and
\[
Q\triangle R_{n}\subseteq U_{n}.
\]
\item $Q$ is \emph{Brouwer/Schnorr effectively measurable} if, moreover,
$\mu(U_{n})$ is computable from $n$.
\end{itemize}
\end{defn}

Notice that a measure zero Martin-Löf effectively measurable set
is exactly a Martin-Löf null set, and a measure zero Brouwer/Schnorr
effectively measurable set is exactly a Schnorr null set.

In the following, let $f(x){\uparrow}$ denote that $x$ is not in the domain of $f$.
\begin{defn}
\label{def:MLR-SR-eff-meas-fun}For a partial function $f\colon X\rightarrow Y$,
where the metric of $Y$ is $d_Y$,

\begin{itemize}
\item $f$ is \emph{Martin-Löf effectively measurable} if there is a
computable sequence of basic functions $(g_{n})$ and a computable
sequence $(U_{n})$ of effectively open sets such that $\mu(U_{n})\leq2^{-n}$,
and
\[
\{x:f(x){\uparrow} \quad\lor\quad g_n(x){\uparrow} \quad\lor\quad d_Y(f(x),g_{n}(x))>2^{-n}\}\subseteq U_{n}.
\]
\item $f$ is \emph{Brouwer/Schnorr effectively measurable} if, moreover,
$\mu(U_{n})$ is computable from $n$.
\end{itemize}
\end{defn}

\begin{defn}
\label{def:MLR-SR-eff-L1}For a partial function $f\colon X\rightarrow\mathbb{R}$,

\begin{itemize}
\item $f$ is \emph{Martin-Löf effectively integrable} if there is a computable
sequence of basic functions $(g_{n})$ and a computable sequence $(U_{n})$
of effectively open sets such that $\mu(U_{n})\leq2^{-n}$, and
\[
\{x:f(x){\uparrow} \quad\lor\quad g_n(x){\uparrow} \quad\lor\quad |f(x) - g_{n}(x)|>2^{-n}\}\subseteq U_{n},
\]
and
\[
\int|f-g_{n}|\,d\mu\leq2^{-n}.
\]
\item $f$ is \emph{Brouwer/Schnorr effectively integrable} if, moreover,
$\mu(U_{n})$ is computable from $n$.
\end{itemize}
\end{defn}
The \emph{Martin-Löf} and \emph{Brouwer/Schnorr effective $L^p$ functions} are defined analogously.

\begin{defn}
\label{def:MLR-SR-eff-au-conv}Given a sequence of Martin-Löf effectively
measurable functions $f_{k}\colon X\rightarrow Y$ and a 
Martin-Löf effectively measurable function $f\colon X\rightarrow Y$,

\begin{itemize}
\item $f_{k}$ converges to $f$ \emph{Martin-Löf effectively almost uniformly}
if there is a computable \emph{rate of almost uniform convergence}
$K\colon \mathbb{N} \times \mathbb{N} \to \mathbb{N}$ 
and a computable sequence of effectively open sets $U_n$ where for all $n$, 
$\mu(U_n) \leq 2^{-n}$ and
\[
\{x\in X:\forall m\ \exists k\geq K(m, n)\ (f(x){\uparrow}\ \lor\  f_k(x){\uparrow}  \ \lor\   d_Y(f_{k}(x),f(x))>2^{-m})\}\subseteq U_n.
\]
\item $f_{n}$ converges to $f$ \emph{Brouwer/Schnorr effectively almost uniformly}
if, moreover, $\mu(U_{n})$ is computable from $n$.
\end{itemize}
\end{defn}

These pointwise versions allow us to treat measurable functions as true 
functions taking values $x$ and providing values $f(x)$.  For example, the Schnorr randomness
version of the Lebesgue differentiation theorem (Theorem~\ref{thm:SR-LDT}) can be strengthen to include 
a limit.

\begin{thm}[Pathak, Rojas, Simpson\cite{Pathak.Rojas.Simpson:2014}, Rute {{\cite[Thm.~4.10, p.~46, Thm.~12.3, p.~56]{Rute:2013pd}}}]
\label{thm:SR-LDT-2}The following are equivalent for a real $x\in[0,1]$.

\begin{enumerate}
\item The real $x$ is Schnorr random.
\item For every Brouwer/Schnorr effectively integrable function $f \colon [0,1] \to \mathbb{R}$,
\[\lim_{r\to0}\frac{1}{2r}\int_{x-r}^{x+r}f(y)\,dy = f(x).\]
\end{enumerate}
\end{thm}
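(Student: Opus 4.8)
The plan is to prove both implications while offloading the bare convergence onto Theorem~\ref{thm:SR-LDT} and spending the real effort only on \emph{identifying} the limit. Throughout write $A_r h(x) = \frac{1}{2r}\int_{x-r}^{x+r} h(y)\,dy$.

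For the direction $(2)\Rightarrow(1)$ I would argue by contraposition. If $x$ is not Schnorr random, then by the corresponding direction of Theorem~\ref{thm:SR-LDT} there is a (point-free) effectively integrable $f$ for which $A_r f(x)$ fails to converge as $r\to0$; by the equivalence between the point-free and the Brouwer/Schnorr pointwise notions of effective integrability this $f$ may be taken to be Brouwer/Schnorr effectively integrable. For such an $f$ the averages do not converge at all, so they certainly cannot converge to $f(x)$, and hence $(2)$ fails. This settles the easy direction.

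The substance is $(1)\Rightarrow(2)$. Fix a Schnorr random $x$ and a Brouwer/Schnorr effectively integrable $f$ with witnessing data $(g_n),(U_n)$ from Definition~\ref{def:MLR-SR-eff-L1}: basic step functions $g_n$, effectively open $U_n$ with $\mu(U_n)\le 2^{-n}$ computable, $\int|f-g_n|\,d\mu\le 2^{-n}$, and exceptional set contained in $U_n$. Since $(U_n)$ is a Schnorr test, $x\notin U_n$ for all large $n$, so $f(x)$ is defined and $g_n(x)\to f(x)$. As $f$ is in particular point-free effectively integrable, Theorem~\ref{thm:SR-LDT} yields that $L:=\lim_{r\to0}A_r f(x)$ exists. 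Because each $g_n$ is a rational step function and a Schnorr random avoids the (rational) boundaries of the ring elements, $g_n$ is constant near $x$, so $A_r g_n(x)=g_n(x)$ for small $r$ and therefore $L-g_n(x)=\lim_{r\to0}A_r(f-g_n)(x)$. It then suffices to show $|L-g_n(x)|\to0$, for this together with $g_n(x)\to f(x)$ forces $L=f(x)$. I would bound $|A_r(f-g_n)(x)|\le A_r(|f-g_n|)(x)\le M(f-g_n)(x)$, the Hardy--Littlewood maximal function. Since for each rational $r>0$ the map $y\mapsto A_r(|f-g_n|)(y)$ is uniformly computable, $M(f-g_n)$ is lower semicomputable and $\{M(f-g_n)>2^{-n/3}\}$ is effectively open, with measure at most $C\,2^{-n}/2^{-n/3}=C\,2^{-2n/3}$ by the weak $(1,1)$ inequality. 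If these sets are assembled into a Schnorr test, then the Schnorr random $x$ lies outside all but finitely many of them, whence $M(f-g_n)(x)\le 2^{-n/3}\to0$ and $|L-g_n(x)|\to0$, as required.

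The main obstacle is precisely this last assembly: a Schnorr test demands \emph{computable} measures, whereas the measure of $\{M(f-g_n)>2^{-n/3}\}$ is only lower semicomputable, equipped with the computable \emph{upper} bound $C\,2^{-2n/3}$. I would resolve it by replacing each superlevel set with a computable-measure effectively open cover extracted from an effective form of the weak maximal inequality, as carried out by Pathak, Rojas, and Simpson \cite{Pathak.Rojas.Simpson:2014} and Rute \cite{Rute:2013pd}. Equivalently, and perhaps more cleanly in the paper's framework, one packages the same estimate as the statement that $A_r f\to f$ \emph{Brouwer/Schnorr effectively almost uniformly} in the sense of Definition~\ref{def:MLR-SR-eff-au-conv}, and then invokes Lemma~\ref{lem:useful-conversions}(\ref{item:ua-implies-SR}), which guarantees that an effective rate of almost uniform convergence forces convergence to the effectively measurable representative $f$ at every Schnorr random. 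Either route cleanly separates the genuine analytic content---the maximal inequality together with the continuity of averages of step functions at Schnorr randoms---from the effectivity bookkeeping that is the only real difficulty.
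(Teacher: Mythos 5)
The paper itself does not prove Theorem~\ref{thm:SR-LDT-2}; it quotes it from Pathak--Rojas--Simpson and Rute, and your outline follows the strategy of those sources. Your $(2)\Rightarrow(1)$ direction is fine, and the reduction in $(1)\Rightarrow(2)$ is correctly organized: $x$ avoids a tail of the Schnorr test $(U_n)$ (pass to $\bigcup_{m\ge n}U_m$, whose measures are computable because the $\mu(U_m)$ are computable with computably convergent sum), so $g_n(x)\to f(x)$; each basic $g_n$ is locally constant at $x$ because the boundary points of the ring elements are computable reals and hence not Schnorr random; and $|L-g_n(x)|\le M(f-g_n)(x)$, so it suffices to drive the maximal function to $0$ along Schnorr randoms.

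The step you flag as ``the main obstacle'' is, however, a genuine gap rather than bookkeeping: it is precisely the point where Schnorr randomness, as opposed to Martin-L\"of randomness, enters. What you actually establish is that the sets $V_n=\{M(f-g_n)>2^{-n/3}\}$ are uniformly $\Sigma^0_1$ with $\mu(V_n)\le C\,2^{-2n/3}$; after reindexing this is a Martin-L\"of test, so your argument as written only yields the (weaker) conclusion that the limit identity holds at every Martin-L\"of random. There is no generic way to round a $\Sigma^0_1$ set with a computable upper bound on its measure up to one of computable measure --- otherwise every Martin-L\"of test would be a Schnorr test --- so the fix must use the specific structure of $M(f-g_n)$. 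The cited proofs do this by, roughly, splitting $M(f-g_n)\le M(g_m-g_n)+M(f-g_m)$ and using that the maximal function of a difference of rational step functions is an explicitly computable piecewise-rational function, whose superlevel sets at computably chosen non-critical levels are effectively open with \emph{computable} measure; these finite-stage sets are then assembled into unions whose measures converge at a computable rate, giving a genuine Schnorr test. Your alternative packaging via Definition~\ref{def:MLR-SR-eff-au-conv} and Lemma~\ref{lem:useful-conversions}(\ref{item:useful-limits}) relocates but does not remove this difficulty, since establishing Brouwer/Schnorr (rather than Martin-L\"of) effective almost uniform convergence of $A_rf\to f$ requires exactly the same computable-measure covers. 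The skeleton is the right one, but the theorem is not proved until this step is carried out.
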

\subsection{The equivalence of the three approaches and the connection with randomness\label{subsec:Equivalence-of-approaches}}

The three approaches \textemdash{} point-free, Martin-Löf, and Brouwer/Schnorr
\textemdash{} are all essentially equivalent. This next theorem is
stated for effectively measurable functions, but also holds for effectively measurable
sets, effective $L^{p}$ functions, and effective almost uniform convergence.
\begin{thm}\label{thm:equiv-of-3-approaches}
~

\begin{enumerate}
\item A Brouwer/Schnorr effectively measurable function is a Martin-Löf
effectively measurable function.
\item The equivalence class of a Martin-Löf effectively measurable function
is a point-free effectively measurable function.
\item If $f\colon (X,\mu)\rightarrow Y$ is a point-free effectively measurable
function given by a sequence $(f_{n})$ of basic functions such that
\begin{align}\label{eq:basic-func-converge}
\int\frac{d_{Y}(f,f_{n})}{1 + d_{Y}(f,f_{n})}\,d\mu\leq2^{-n},
\end{align}
then the partial function $\widetilde{f}\colon X\to Y$ 
given by $\widetilde{f}(x):=\lim_{n}f_{n}(x)$ (where the limit exists)
is a Brouwer/Schnorr effectively measurable function.  

Moreover the exceptional set $\{x:f_{n}(x) \text{ diverges}\}$ is a Schnorr null
set.  Also, if $f'_{n}$ is an alternate sequence of basic functions satisfying 
(\ref{eq:basic-func-converge}), then
$\{x:\lim_n f_{n}(x) \neq \lim_n f'_{n}(x)\}$ is a Schnorr null
set. (See Pathak, Rojas, Simpson \cite[Thm.~3.9]{Pathak.Rojas.Simpson:2014}, Rute \cite[Prop.~3.18]{Rute:2013pd}, Demuth and Ku\v{c}era \cite[Thm.~4.1]{Demuth.Kucera:1979}, and
Bishop and Bridges \cite[Props.~8.2, 8.3]{Bishop.Bridges:1985}.)
\end{enumerate}
\end{thm}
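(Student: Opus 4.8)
The plan is to handle the three parts in increasing order of depth, since parts (1) and (2) are essentially definitional while part (3) carries all of the analytic content. For part (1), I would simply note that the Brouwer/Schnorr definition (Definition~\ref{def:MLR-SR-eff-meas-fun}) is the Martin-L\"of one together with the extra demand that $\mu(U_n)$ be computable in $n$; discarding that demand, the same witnessing data $(g_n),(U_n)$ exhibits the function as Martin-L\"of effectively measurable, so nothing is left to prove. For part (2), given Martin-L\"of data with $\mu(U_n)\le 2^{-n}$ and $\{x: f(x){\uparrow}\lor g_n(x){\uparrow}\lor d_Y(f(x),g_n(x))>2^{-n}\}\subseteq U_n$, I would show that the basic functions $g_n$ converge to $f$ in the $L^0$-metric $\rho(f,g)=\int \tfrac{d_Y(f,g)}{1+d_Y(f,g)}\,d\mu$ at an effective rate. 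Splitting the integral over $U_n$ and its complement and using $\tfrac{t}{1+t}\le\min\{t,1\}$ gives $\rho(f,g_n)\le 2^{-n}\mu(U_n^c)+\mu(U_n)\le 2^{-n+1}$. Since the $g_n$ lie in the prescribed dense subset of $L^0(X,\mu;Y)$ and converge to $[f]$ with the computable rate $2^{-n+1}$, the class $[f]$ is a computable point, i.e.\ point-free effectively measurable. The sets where $f$ or $g_n$ are undefined are null ($\{f{\uparrow}\}\subseteq\bigcap_n U_n$, and the $g_n$-boundaries are null by construction), so they do not affect the integral.

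Part (3) is the heart of the matter and the main obstacle: convergence in measure does not imply pointwise convergence, so I must upgrade the hypothesis $\rho(f,f_n)\le 2^{-n}$ to genuine a.e.\ convergence of $(f_n(x))$ and, crucially, bound the exceptional set by a \emph{Schnorr} (not merely Martin-L\"of) test. The strategy is an effective Borel--Cantelli argument. From $\rho(f_n,f_{n+1})\le\rho(f_n,f)+\rho(f,f_{n+1})\le 2^{-n+1}$ and Markov's inequality applied to the increasing map $t\mapsto t/(1+t)$, the deviation sets $A_n=\{x:d_Y(f_n(x),f_{n+1}(x))>2^{-n/2}\}$ satisfy $\mu(A_n)\le(1+2^{n/2})\rho(f_n,f_{n+1})\le C\,2^{-n/2}$. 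Each $A_n$ is a finite union of ring elements (the $f_n$ are step functions over $\mathcal{R}$), hence effectively open of computable measure. Setting $U_N=\bigcup_{n\ge N}A_n$, outside $U_N$ the tail $\sum_{n\ge N}d_Y(f_n(x),f_{n+1}(x))\le\sum_{n\ge N}2^{-n/2}$ converges, so $(f_n(x))$ is Cauchy and, by completeness of $Y$, converges; thus $\{x:\lim_n f_n(x)\text{ diverges}\}\subseteq\bigcap_N U_N$.

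The subtle point — the one genuinely requiring the Schnorr structure — is that $\mu(U_N)$ must be shown \emph{computable}, not merely lower semicomputable as for a general effectively open set. Here I would use that $\sum_n\mu(A_n)$ converges effectively, with computable tail bound $\sum_{n\ge M}\mu(A_n)\le C\,2^{-M/2}$: the finite unions $\bigcup_{N\le n<M}A_n$ are again ring elements of directly computable measure and underestimate $\mu(U_N)$ by at most $C\,2^{-M/2}$, so $\mu(U_N)$ is computable uniformly in $N$. Reindexing so that $C\,2^{-N(k)/2}\le 2^{-k}$ makes $(U_{N(k)})$ a Schnorr test, proving the divergence set Schnorr null. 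To see $\widetilde f=\lim_n f_n$ is Brouwer/Schnorr effectively measurable, I take $g_k=f_{N(k)}$ and $V_k=U_{N(k)}$: outside $U_{N(k)}$ one has $d_Y(f_{N(k)}(x),\widetilde f(x))\le\sum_{n\ge N(k)}2^{-n/2}\le 2^{-k}$ for a suitable choice of $N(k)$, while the points where $\widetilde f$ or $g_k$ is undefined (the divergence set and the step-function boundaries) are absorbed into $V_k$, enlarging $V_k$ by a small computable-measure effectively open cover of the null boundary if needed. Finally, for independence of the approximating sequence, the identical argument applied to $B_n=\{x:d_Y(f_n(x),f_n'(x))>2^{-n/2}\}$, using $\rho(f_n,f_n')\le 2^{-n+1}$, shows $f_n(x)$ and $f_n'(x)$ share a limit off a Schnorr null set, so $\{x:\lim_n f_n(x)\ne\lim_n f_n'(x)\}$ is Schnorr null.
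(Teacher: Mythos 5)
Your proposal is correct and takes the standard route: part (1) is definitional, part (2) is the direct integral split $\rho(f,g_n)\le 2^{-n}\mu(U_n^c)+\mu(U_n)\le 2^{-n+1}$, and part (3) is the effective Chebyshev--Borel--Cantelli argument, where the effectively convergent series $\sum_n\mu(A_n)$ is exactly what upgrades the tail unions $U_N=\bigcup_{n\ge N}A_n$ from a Martin-L\"of test to a Schnorr test (the only routine care you elide is that the strict comparisons $d_Y(a_i,a_j)>2^{-n/2}$ on the cells are merely semi-decidable, which is repaired by the usual two-threshold trick without affecting the $O(2^{-n/2})$ measure bound). The paper gives no proof of its own, deferring to Pathak--Rojas--Simpson, Rute, Demuth--Ku\v{c}era, and Bishop--Bridges, and those sources argue essentially as you do.
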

In particular, this above theorem implies that for every point-free
effectively measurable function $f$ and for every Schnorr random
$x$, there is a unique canonical value $\widetilde{f}(x):=\lim_{n}f_{n}(x)$.
This also shows that most theorems about the point-free and 
Martin-Löf effectively measurable functions
naturally generalize to the Brouwer/Schnorr effectively measurable functions.

Schnorr randomness is the weakest randomness notion for
this purpose.  Rute \cite[Thm.~12.19, p.~70]{Rute:2013pd} showed that 
there is no weaker randomness notion for which
Theorem~\ref{thm:equiv-of-3-approaches}(3) holds.  This again
demonstrates how Schnorr randomness naturally arises out of computable and 
constructive analysis 
(and that it is more than a coincidence that the Brouwer/Schnorr
definition is the pointwise definition adopted by most of 
the early constructivists).
\subsection{Other equivalent representations\label{subsec:Other-representations}}

Many of the constructive definitions in the literature are equivalent
to the Brouwer/Schnorr approach, including the definitions of Brouwer,
Demuth, and Bishop. However, there are a few caveats. First, Brouwer's and
Bishop's definitions are not computable, so one first needs to give
them a computable interpretation. Although Brouwer's definition of measurable
set is not defined on a measure one set,  it can be extended
to one \cite[\S\S6.2.2]{Heyting:1956}. This extension is equivalent to the Brouwer/Schnorr approach.
Demuth's definitions are restricted to the computable
reals, but these definitions naturally extend to the set of real numbers.\footnote{Also,
to be pedantic, in our definition of, say, Brouwer/Schnorr integrable function there
are $2^{2^{\aleph_0}}$ Brouwer/Schnorr effectively integrable functions.  
For example, any function
$f$ a.e.\ equal to $0$ is Brouwer/Schnorr effectively integrable
if $\{x : f(x) \neq 0\}$ is a Schnorr null set.
(In this way, our Brouwer/Schnorr representation is a multi-representation, whereby
each name corresponds to a set of objects.)
Whereas, some otherwise equivalent definitions of effectively integrable functions 
require that $f(x) = \lim_n g_n(x)$ for a 
computable sequence of simple functions and that the domain of $f$ is exactly the 
set of $x$ for which that limit converges.  In this case, there would only be countably many
Brouwer/Schnorr integrable functions, and every Brouwer/Schnorr integrable function
would be Borel-measurable.}

While verifying all these equivalences would take us too far afield, much
of the work can be done via the following lemma. Extending the definition
from Section~\ref{sec:Characterizing-randomness},
if $X$ and $Y$ are computable metric spaces and $\mu\in\mathcal{M}_1(X)$ is computable, 
then a partial function $f\colon(X,\mu)\rightarrow Y$
is \emph{almost everywhere computable} if there is a $\Pi_{2}^{0}$
subset $A\subseteq X$ of $\mu$-full measure such that $f\colon A\rightarrow Y$
is computable. (These are just the functions that are computable almost surely.
A definition in this regard, avoiding mention of $\Pi_{2}^{0}$ sets,
can be found in Rute \cite[Defs.~7.1, 7.4]{Rute:}.)
\begin{lem}[See Rute {{\cite[\S3, p.~36]{Rute:2013pd}}}]\label{lem:useful-conversions}
Let $X$ and $Y$ be a computable metric spaces and $\mu\in\mathcal{M}_1(X)$ be a computable probability measure.

\begin{enumerate}
\item Every computable function $f\colon X\rightarrow Y$ is Brouwer/Schnorr
effectively measurable.
\item Every almost everywhere computable function $f\colon X\rightarrow Y$
is Brouwer/Schnorr effectively measurable.
\item \label{item:useful-limits} If $(f_{n})$ is a computable sequence of 
Brouwer/Schnorr effectively measurable
functions such that (the equivalence classes of) $(f_{n})$ converge
point-free effectively almost uniformly (Definition~\ref{def:pointfree-au-conv}), then
$f_{n}$ converges Brouwer/Schnorr effectively almost uniformly and
the pointwise limit $f=\lim_{n}f_{n}$ is Brouwer/Schnorr effectively
measurable.
\item \label{item:ua-implies-SR}For computable $p$, the Brouwer/Schnorr effectively $L^{p}$ functions
$f\colon(X,\mu)\rightarrow\mathbb{R}$ are exactly the Brouwer/Schnorr
effectively measurable functions such that $\|f\|_{L^{p}}$ is computable.
\item The Brouwer/Schnorr effectively measurable sets $A$ are exactly the
sets such that $\mathbf{1}_{A}$ is Brouwer/Schnorr effectively measurable.
\end{enumerate}
\end{lem}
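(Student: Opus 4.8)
The plan is to treat the five parts as a graded sequence, proving two closure facts about the ambient space first and then deducing each item, with part~(\ref{item:useful-limits}) carrying essentially all of the technical weight. The two preliminary facts, both about the computable probability space $(X,\mathcal{A},\mathcal{R},\mu)$ built in Subsection~\ref{subsec:Computable-measures}, are: (i) every basic function is Brouwer/Schnorr effectively measurable, witnessed by the constant sequence together with covers of the (null) boundaries of the finitely many sets $R_i$ by annuli $\{r<d_X(\cdot,a_j)<r'\}$ of computable measure, which is possible precisely because the basis radii were chosen so that $\mu\{d_X(\cdot,a_j)=r\}=0$ and $i,j\mapsto\mu(B(a_j,r_i))$ is computable; and (ii) any effectively closed set $F$ of computable measure can be covered, uniformly, by an effectively open set of computable measure arbitrarily close to $\mu(F)$ from above, since one enumerates basic balls into a finite union $C\subseteq X\smallsetminus F$ with $\mu(C)>\mu(X\smallsetminus F)-\varepsilon$ (measures of finite unions of basis balls are computable) and takes the cover $X\smallsetminus\overline C$, whose measure $1-\mu(C)$ is computable. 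Combining (ii) with the fact recorded in Subsection~\ref{subsec:Additional-remarks} that Kurtz null sets are Schnorr null, one sees that every null effective $F_\sigma$ set admits a uniformly computable Schnorr cover.

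For part~(1), a total computable $f$ is continuous, and I would use the effective tightness of $\mu$ (a standard consequence of its computability): for each $n$ compute an effectively compact set $K_n$ with $\mu(X\smallsetminus K_n)$ computable and $<2^{-n-1}$, on which $f$ is effectively uniformly continuous. Covering $K_n$ by finitely many basis balls of small diameter, refining to a partition from $\mathcal{R}$, and choosing on each piece a point of the dense set of $Y$ within $2^{-n}$ of the values of $f$ there yields a basic $g_n$ whose disagreement set lies in $(X\smallsetminus K_n)$ together with the null partition boundaries; taking $U_n$ to be $X\smallsetminus K_n$ enlarged by the boundary covers from (i) gives a witness of computable measure $\le 2^{-n}$. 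For part~(2), write the domain of an a.e.\ computable $f$ as a $\Pi^0_2$ full-measure set $A=\bigcap_k V_k$; since $A\subseteq V_k$ forces $\mu(V_k)=1$, the complement $X\smallsetminus A$ is a null effective $F_\sigma$, hence a Schnorr null set with a uniformly computable Schnorr cover by the preliminary remark. One repeats the part~(1) construction for the partial computable $f$ on a tight compact subset of $A$ and folds the Schnorr cover of $X\smallsetminus A$ into the exceptional sets $U_n$.

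Part~(\ref{item:useful-limits}) is the heart. Since each $f_n$ is Brouwer/Schnorr effectively measurable, I would choose uniformly a basic function $g_n$ agreeing with $f_n$ off an open set of computable measure $\le 2^{-n}$; the point-free almost uniform hypothesis then makes $(g_n)$ converge in the $L^0$-metric to the point-free class of $f$ with a computable rate, which puts us in position to invoke Theorem~\ref{thm:equiv-of-3-approaches}(3). Its conclusion supplies a Brouwer/Schnorr effectively measurable $\widetilde f=\lim_n g_n$ with $\{x:g_n(x)\text{ diverges}\}$ Schnorr null. It then remains to (a) check that $\lim_n f_n=\widetilde f$ off a Schnorr null set, and (b) promote the hypothesis to Brouwer/Schnorr effective almost uniform convergence of the \emph{original} $f_n$ as in Definition~\ref{def:MLR-SR-eff-au-conv}, which I would obtain by assembling the computable rate $K(m,n)$ with the computable-measure covers of the disagreement sets between $f_n$ and $g_n$. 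The main obstacle lives here: the point-free hypothesis only controls an \emph{equivalence class} and only supplies a measure \emph{bound} $2^{-n}$, whereas the Brouwer/Schnorr conclusion demands a concrete pointwise limit and exceptional open sets of \emph{computable} measure; reconciling these requires telescoping the bound against the computable measures of the basic approximations so that the total is a computably convergent sum, and re-establishing agreement pointwise off a genuine Schnorr null set.

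Finally, parts~(\ref{item:ua-implies-SR}) and (5) are reformulations. For (\ref{item:ua-implies-SR}), the forward direction is immediate since an $L^p$ witness is also a measurability witness and then $\|f\|_{L^p}=\lim_n\|g_n\|_{L^p}$ is computable; for the converse I would upgrade a measure-approximation to an $L^p$-approximation by truncating the $g_n$ and using that computability of $\|f\|_{L^p}$ makes $|f|^p$ effectively integrable, so its integral is effectively absolutely continuous and the $L^p$-mass of $f$ over the small sets $U_n$ is computably negligible. Part~(5) is essentially definitional: for $n\ge 1$ one has $\{|\mathbf 1_A-\mathbf 1_{R_n}|>2^{-n}\}=A\triangle R_n$, so a Brouwer/Schnorr witness for the set $A$ (Definition~\ref{def:MLR-SR-eff-meas-set}) and for the function $\mathbf 1_A$ (Definition~\ref{def:MLR-SR-eff-meas-fun}) are interchangeable, the basic sets $R_n$ matching the basic functions $\mathbf 1_{R_n}$.
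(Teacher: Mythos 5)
The paper never proves this lemma; it is stated with a pointer to Rute \cite[\S3]{Rute:2013pd}, so there is no in-text argument to compare against. Judged on its own, your outline follows the same strategy as the cited source: reduce everything to basic functions supported on a basis of balls with null boundaries, use effective tightness/regularity of the computable measure for parts (1) and (2), and let part (3) carry the weight by feeding the diagonal basic approximations into Theorem~\ref{thm:equiv-of-3-approaches}(3). Parts (1), (2), (4) and (5) are essentially right as you describe them (in (5) you should note that the basic functions witnessing $\mathbf{1}_A$ take rational values, so one rounds them to $\{0,1\}$ before reading off the sets $R_n$, which is harmless for $n\ge 2$).

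Two points need repair or completion. First, in your preliminary fact (ii), $C\subseteq X\smallsetminus F$ does not give $F\subseteq X\smallsetminus\overline{C}$: the closure of an open $C$ can meet the closed set $F$. Exhaust the effectively open complement of $F$ by closed balls $\overline{B}(a_j,r')$ with $r'<r_i$ chosen among the good radii (so their measures are still computable); the finite union is then closed and its complement is an open cover of $F$ of computable measure. Second, and more importantly, in part (3) you correctly name the crux --- the point-free hypothesis only \emph{bounds} the measure of the bad set by $2^{-n}$, while the Brouwer/Schnorr conclusion needs that set inside an effectively \emph{open} set of \emph{computable} measure --- but you only describe the resolution rather than carry it out. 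What is missing is the actual construction: take $g_k$ basic with $d_Y(f_k,g_k)\le 2^{-k}$ off an open $V_k$ of computable measure $\le 2^{-k}$; observe that for basic $g_k,g_{k+1}$ the sets $\{d_Y(g_k,g_{k+1})>q\}$ are, up to null boundaries, finite unions of elements of $\mathcal{R}$, hence effectively open with computable measure; then the tail union $U_n=\bigcup_{k\ge K(m,n)}\bigl(V_k\cup\{d_Y(g_k,g_{k+1})>2^{-m-2}\}\bigr)$ (suitably combined over $m$) is effectively open, contains the bad set by the triangle inequality, has measure $\le 2^{-n}$ after re-indexing, and has \emph{computable} measure because it is lower semicomputable with a computably convergent upper bound coming from the summable tail $\sum_k\mu(V_k)+\mu\{d_Y(g_k,g_{k+1})>2^{-m-2}\}$. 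Until that paragraph is written, part (3) --- and with it the lemma's main claim --- is a correct plan rather than a proof.
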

Also, inner and outer regularity along with Luzin's theorem provide convenient 
representations which are, respectively, equivalent to the Brouwer/Schnorr effectively measurable sets and the Brouwer/Schnorr effectively measurable functions.
\begin{itemize}
\item (Inner and outer regularity, \emph{Schnorr layerwise decidability})
The Brouwer/Schnorr effectively measurable sets $A$ are exactly those with
a computable sequence of effectively closed sets $C_{n}$ and effectively
open sets $U_{n}$ such that $C_{n}\subseteq A\subseteq U_{n}$, $\mu(C_{n})$
is computable in $n$, $\mu(U_{n})$ is computable in $n$, and $\mu(U_{n}-C_{n})\leq2^{-n}$.
(The sequence $C_{n}$ can also be modified to be compact \textemdash{}
in constructive analysis terminology these are called \emph{effectively located sets},
in computable analysis these are called \emph{computable sets}.) \cite[Prop.~3.22, p.~41]{Rute:2013pd}
\item (Luzin's theorem, \emph{Schnorr layerwise computability}) The Brouwer/Schnorr
effectively measurable functions $f\colon(X,\mu)\rightarrow Y$ are
exactly those with a computable sequence of closed (or even effectively
located/computable) sets $K_{n}$ such that $\mu(K_{n})\leq1-2^{-n}$
and $\mu(K_{n})$ is computable in $n$ and there is a sequence of
computable functions $f_{n}\colon K_{n}\rightarrow Y$ such that $f\upharpoonright K_{n}=f_{n}$.
\cite{Miyabe:2013uq}\cite[Prop.~3.21, p.~41]{Rute:2013pd}
\end{itemize}
For the constructive version of these results, see Spitters \cite{Spitters:2005}.
These above definitions can be modified so that they are equivalent to the Martin-Löf
effectively measurable sets and functions by removing the restriction that
$\mu(C_{n})$, $\mu(U_{n})$, and $\mu(K_{n})$ are computable. These
notions are called \emph{layerwise decidable sets} and 
\emph{layerwise comptuable functions} \cite{Hoyrup.Rojas:2009a,Hoyrup.Rojas:2009b}.
The idea is that if $K_{n}$ is the complement of the universal Martin-Löf
test, then to compute $f(x)$ for a Martin-Löf random $x$, one only
needs to know $x$ and an upper bound on the least $n$ such that
$x\in K_{n}$. This least $n$ is known as the \emph{randomness deficiency}
of $x$. Layerwise computability is a useful notion, because it gives
a very quick and intuitive method for showing that a function $f$ is Martin-Löf
effectively measurable.
\subsection{Other non-equivalent representations\label{subsec:Other-non-equivalent-representations}}

While most of the definitions in the literature align with the ones given above,
it should be mentioned that there are other useful representations.
For example, just as there are computable reals and reals computable from below,
there are natural representations of what it means for a measure, 
a real-valued measurable function, and a measurable set to be 
``computable from below'' (or ``from above'').
In particular, for measurable sets this captures the notion that measurable sets
form a locale (as mentioned in Subsection~\ref{subsec:Point-free}).  
See Weihrauch \cite{Weihrauch:2017}.
Also, often it is sufficient to represent a random variable, 
not as a measurable function, but only as a distribution (probability measure)
\cite{Muller:1999}.  Rute's work \cite{Rute16,Rute:2018} 
shows that it is convenient in randomness to
represent a measurable function $f \colon (X,\mu) \to Y$ by both a name for $f$
(as above) and also a name for the conditional probability map 
$y \mapsto \mu(\cdot | f = y)$ which is a measurable function of type 
$(Y,\mu_f) \to \mathcal{M}_1(X)$.  It appears that all natural examples of 
measurable functions are computable in this stronger sense.

Alternatively, in effective descriptive set theory \cite{Moschovakis:2009} one follows Borel's 
transfinite inductive definition to get effectively Borel measurable sets whose measures 
are hyperarithmetic reals.  
As Martin-Löf \cite{Martin-Lof:1970} first showed, this leads to its own notion of randomness.  
This ``higher randomness'' has since become its own area of study \cite[Ch.~14]{Chong.Yu:2015}.  
Coquand \cite{Coquand:2000} showed it is possible to reason constructively 
about measure theory in the Borel hierarchy using a hyperarithmetic definition of the reals.

\subsection{Computing effectively measurable functions}

So far our discussion of effectively measurable functions has been
a bit abstract. However, those interested in the foundations of computable
probability \textemdash{} including probabilistic algorithms and simulating
probabilistic processes \textemdash{} are right to ask the question,
``Can all this be implemented on a computer?'' The answer is yes!

Just as the effectively continuous functions $f\colon X\rightarrow Y$ are
the same as the computable functions from $X$ to $Y$ (which can
be implemented on a computer \textemdash{} in theory), effectively
measurable functions $f\colon(X,\mu)\rightarrow Y$ are the same as
recursively approximable functions (which can be implemented on a
computer). The definition goes back to Friedman and Ko \cite{Ko.Friedman:1982}.

Returning to the continuous case, assume $f\colon\{0,1\}^{\mathbb{N}}\rightarrow\mathbb{R}$
is computable. Then there is an algorithm $g\colon \mathbb{N}\times\{0,1\}^{\mathbb{N}}\rightarrow\mathbb{Q}$
which takes $x\in\{0,1\}^{\mathbb{N}}$ and $n\in\mathbb{N}$, and
returns an approximation $g(n,x)$ such that $|g(n,x)-f(x)|\leq2^{-n}$.
In short, this algorithm approximates $f$ in distance.

For a measurable function, we want an algorithm which approximates
$f$ both in distance and in probability.  (The following definitions naturally
generalize to any measurable function $f\colon (X,\mu)\to Y$.  See Bosserhoff \cite{Bosserhoff:2008}.)
\begin{defn}\label{def:rec-approx}
A measurable function $f\colon(\{0,1\}^{\mathbb{N}},\mu)\rightarrow\mathbb{R}$
is \emph{recursively approximable} if there is an algorithm $g\colon \mathbb{N}\times\{0,1\}^{\mathbb{N}}\rightarrow\mathbb{Q}$
which takes in $x\in\{0,1\}^{\mathbb{N}}$ and $n\in\mathbb{N}$,
and outputs an approximation $g(n,x)$ such that for all $n\in\mathbb{N}$,
\[
\mu\left\{ x:|g(n,x)-f(x)|>2^{-n}\right\} \leq2^{-n}.
\]
That is to say, for each $n$, there is a small probability $\leq2^{-n}$
that the algorithm will return a bad approximation. 
(To be clear, the algorithm need not know the approximation
is bad.)
\end{defn}
Notice that this definition is point-free in that it is invariant
under almost everywhere equivalence. Also, we could modify this definition
to allow $g$ to be partial. Assume $g$ is the same as above, except
that it is partial and 
\[
\mu\left\{ x: g(n,x){\uparrow}\ \lor\ |g(n,x)-f(x)|>2^{-n}\right\} \leq2^{-n}
\]
where $g(n,x){\uparrow}$ means $g$ does not halt with those inputs.
Then let $h(n,x)$ be the same as $g(n+1,x)$ except that after $g(n+1,x)$
has halted for at least $1-2^{n+1}$ $\mu$-measure of the $x$, we
set $h(n,x)=0$ for the rest.
\begin{thm}[Ko, Thm.~5.12 \cite{Ko:1991}]
The recursively approximable functions are the same as the point-free
effectively measurable functions.
\end{thm}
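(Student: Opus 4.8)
The plan is to prove both inclusions, using throughout the elementary fact that the metric $\rho(f,g)=\int\frac{|f-g|}{1+|f-g|}\,d\mu$ on $L^{0}(\{0,1\}^{\mathbb{N}},\mu)$ metrizes convergence in measure, and that this correspondence is \emph{effective}. The quantitative bridge is a pair of inequalities. Since $t\mapsto t/(1+t)$ is increasing, for every $\varepsilon>0$,
\[
\frac{\varepsilon}{1+\varepsilon}\,\mu\{x:|f(x)-g(x)|>\varepsilon\}\leq\rho(f,g),
\]
while splitting the integral at the threshold $\varepsilon$ and bounding the integrand by $1$ on the tail gives
\[
\rho(f,g)\leq\varepsilon+\mu\{x:|f(x)-g(x)|>\varepsilon\}.
\]
The first inequality turns an effective bound on $\rho$ into an effective bound on the measure of the ``bad set,'' and the second does the reverse; the whole proof is a matter of threading these two estimates.

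For the forward direction, suppose $f$ is point-free effectively measurable, witnessed by a computable sequence of rational step functions $(s_{k})$ with $\rho(f,s_{k})\leq2^{-k}$. On Cantor space each $s_{k}$ is a finite rational combination of indicators of \emph{clopen} cylinders, hence a \emph{total} computable map $\{0,1\}^{\mathbb{N}}\to\mathbb{Q}$. Given $n$, the first inequality lets me compute an index $k(n)$ with $\mu\{x:|f(x)-s_{k(n)}(x)|>2^{-n}\}\leq2^{-n}$, and I set $g(n,x)=s_{k(n)}(x)$. This is a total algorithm witnessing recursive approximability.

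The reverse direction is the technical heart. Suppose $f$ is recursively approximable via $g$, and fix $n$. Since $g(n,\cdot)$ halts after reading a finite prefix of its input, its domain is a union of cylinders $[\sigma]$, on each of which $g(n,\cdot)$ is constant with a recorded rational value. I would dovetail $g(n,\cdot)$ over all finite prefixes to enumerate these halting cylinders, meanwhile computing the measure of the finite unions accumulated so far (possible because $\mu$ is computable on clopen sets). As the domain has measure at least $1-2^{-n}$, this cumulative measure provably passes the threshold $1-2\cdot2^{-n}$ after finitely many steps; I halt the search there, let $E_{n}$ be the cylinders enumerated so far, and define $s_{n}$ to equal the recorded value of $g(n,\cdot)$ on $E_{n}$ and $0$ elsewhere. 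Then $s_{n}$ is a rational step function, uniformly computable in $n$.

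It remains to estimate the error. On $E_{n}$ we have $s_{n}=g(n,\cdot)$, so
\[
\{x:|s_{n}(x)-f(x)|>2^{-n}\}\subseteq E_{n}^{c}\cup\{x:|g(n,x)-f(x)|>2^{-n}\},
\]
a set of measure at most $2\cdot2^{-n}+2^{-n}=3\cdot2^{-n}$. The second inequality then yields $\rho(s_{n},f)\leq2^{-n}+3\cdot2^{-n}=4\cdot2^{-n}$, so $(s_{n})$ is a computable Cauchy sequence in $L^{0}$ converging to $f$ with a computable rate; after a trivial reindexing to match the Cauchy-name convention this exhibits $f$ as a computable point, i.e.\ as point-free effectively measurable. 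The main obstacle is precisely the extraction step in the reverse direction: one must convert the pointwise oracle algorithm $g(n,\cdot)$, which need not halt everywhere, into a genuine finite object over the Boolean algebra, and the crucial subtlety is arguing that the effective search over halting cylinders terminates---this is exactly where the computability of $\mu$ and the domain bound $1-2^{-n}$ are indispensable. (For the partial version of Definition~\ref{def:rec-approx} the same argument applies verbatim, using $\mu\{x:g(n,x){\uparrow}\}\leq2^{-n}$ for the domain bound.)
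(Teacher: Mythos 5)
The paper states this result only as a citation to Ko's Theorem~5.12 and gives no proof of its own, so there is nothing internal to compare against; judged on its own terms, your argument is correct and is essentially the standard one. The two inequalities $\frac{\varepsilon}{1+\varepsilon}\mu\{|f-g|>\varepsilon\}\leq\rho(f,g)\leq\varepsilon+\mu\{|f-g|>\varepsilon\}$ do exactly the work you claim, and they effectively translate between a modulus for $\rho$ and a modulus for approximation in probability, which is the whole content of the forward direction. The reverse direction is also handled correctly: the use-principle makes the halting cylinders of $g(n,\cdot)$ c.e.\ with recorded constant rational values, the computability of $\mu$ on clopen sets lets you semi-decide when the accumulated measure clears the threshold $1-2\cdot 2^{-n}$, and the termination of that search is guaranteed by the domain bound (measure $1$ in the total case of Definition~\ref{def:rec-approx}, measure $\geq 1-2^{-n}$ in the partial variant), after which the $3\cdot 2^{-n}$ bad-set estimate and the second inequality give $\rho(s_n,f)\leq 4\cdot 2^{-n}$ and hence a computable Cauchy name after reindexing. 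Two small points worth being explicit about: on Cantor space the basic step functions are total (cylinders are clopen), which is what makes your $g(n,x)=s_{k(n)}(x)$ a genuinely total algorithm in the forward direction, and both notions are invariant under a.e.\ equivalence, so working with an arbitrary representative of the $L^{0}$ class is legitimate. You note both, so the proof is complete.
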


\begin{thm}
A measurable function $f\colon(\{0,1\}^{\mathbb{N}},\mu)\rightarrow\mathbb{R}$
is Brouwer/Schnorr effectively measurable if and only if $f$ is recursively
approximable with algorithm $g$ and 
\[
f(x)=\lim_{n\to\infty}g(n,x)\quad\text{on all \ensuremath{x} where \ensuremath{g(n,x)} converges}.
\]
\end{thm}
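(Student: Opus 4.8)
The plan is to prove the two implications separately, leaning where convenient on the preceding theorem of Ko (recursive approximability $=$ point-free effective measurability) and on Theorem~\ref{thm:equiv-of-3-approaches}, which already carries out the delicate analysis relating point-free limits to Brouwer/Schnorr \emph{pointwise} values. Throughout I read $g$ in the sense of the partial variant of recursive approximability introduced just before the statement, so that ``$g(n,x)$ converges'' means $g(n,x)$ is defined for all large $n$ and the resulting rational sequence has a limit. This is the reading under which the convergence clause is exactly strong enough to pin down the pointwise value; see the obstacle paragraph for why this, rather than a total algorithm, is the right vehicle.

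\textbf{Forward direction.} Suppose $f$ is Brouwer/Schnorr effectively measurable, witnessed by basic functions $(g_n)$ and effectively open sets $(U_n)$ with $\mu(U_n)\le 2^{-n}$ computable and bad set contained in $U_n$ (Definition~\ref{def:MLR-SR-eff-meas-fun}). I would define $g(n,x)$ to equal $g_n(x)$ but \emph{forced undefined on $U_n$}: on input $(n,x)$ run the computation of the basic function $g_n(x)$ while simultaneously semideciding $x\in U_n$ from the computable listing of its rational cylinders; if membership is confirmed, loop forever, and otherwise output $g_n(x)$. This is a legitimate partial algorithm, and the set $\{x:g(n,x){\uparrow}\ \lor\ |g(n,x)-f(x)|>2^{-n}\}$ is contained in $U_n$, giving the (partial) recursive-approximation bound $\le 2^{-n}$. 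Writing $V_n:=\bigcup_{m\ge n}U_m$ (effectively open, decreasing, with $\mu(V_n)\le 2^{-n+1}$ computable as a tail of a computably summable series) and $G:=(\bigcap_n V_n)^c=(\limsup_m U_m)^c$, the complement of a Schnorr null set, one checks that $g(n,x)$ is eventually defined if and only if $x\notin U_n$ for all large $n$, i.e.\ exactly on $G$; and on $G$ one has $|f(x)-g_m(x)|\le 2^{-m}$ for all large $m$, so $g(m,x)\to f(x)$. Hence wherever $g$ converges we are on $G$ and the limit is $f(x)$, while on the Schnorr null set $\bigcap_n V_n$ the algorithm is undefined infinitely often and so does not converge; this is what makes the convergence clause hold even though a Brouwer/Schnorr name only specifies $f$ up to a Schnorr null set.

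\textbf{Reverse direction.} Suppose $f$ is recursively approximable by such a $g$ satisfying the convergence clause. For each fixed $n$ the slice $g(n,\cdot)\colon{\subseteq}\{0,1\}^{\mathbb{N}}\to\mathbb{Q}$ is partial computable into a discrete space, hence locally constant on its effectively open domain, i.e.\ a basic function (uniformly in $n$). I would then form the effectively open Cauchy-discrepancy sets $D_m=\{x: g(m,x){\downarrow},\,g(m{+}1,x){\downarrow},\,|g(m,x)-g(m{+}1,x)|>2^{-m}+2^{-m-1}\}$; by the triangle inequality and the approximation bound, $D_m$ together with the small non-halting sets is covered by $U_m\cup U_{m+1}$-type data of computable measure $\le 2^{-m+1}$. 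Setting the new covers to be tails $\bigcup_{m\ge n}D_m$ (plus the undefined parts) yields effectively open $\widehat U_n$ with $\mu(\widehat U_n)$ computable and $\le 2^{-n+c}$, outside of which the slices are Cauchy, hence convergent; by the convergence clause their limit is $f(x)$, so after a harmless reindexing $(g(n,\cdot),\widehat U_n)$ is exactly Brouwer/Schnorr data for $f$. Equivalently, and more cleanly, recursive approximability gives via Ko's theorem that the a.e.\ class of $f$ is point-free effectively measurable; Theorem~\ref{thm:equiv-of-3-approaches}(3) produces the canonical pointwise limit $\widetilde f$, which is Brouwer/Schnorr effectively measurable with Schnorr null divergence set and is a.e.-unique, and the convergence clause identifies $f$ with $\widetilde f$ off a Schnorr null set, so $f$ is itself Brouwer/Schnorr effectively measurable.

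\textbf{Main obstacle.} The crux is entirely in arranging the \emph{divergence} half of the convergence clause in the forward direction. If one insisted on a \emph{total} approximation algorithm, one would be forced to make $g$ diverge on the whole Schnorr null set $\limsup_m U_m$ using only finite detection budgets, and then the set on which a divergence-forcing perturbation is active at level $n$ can have measure far larger than the required $2^{-n}$; worse, ``deep'' points of the null set, whose membership in $U_m$ is only witnessed after reading very many bits, evade every fixed computable detection schedule, so no single sequence of clopen sets of measure $\le 2^{-n}$ need contain them infinitely often. Recognizing this is what forces the partial formulation: allowing $g$ to be undefined on the \emph{semidecidable} sets $U_n$ replaces ``finite detection budget'' by genuine non-halting, so eventual-definedness coincides exactly with the good set $G$ and the deep points are handled automatically. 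The remaining technical care, in the reverse direction, is to extract effectively open covers of \emph{computable} (not merely lower-semicomputable) measure from the halting behaviour of a partial $g$; this is routine given the computable measure $\mu$ and the summable bounds on the $D_m$, and is precisely the point at which the Schnorr (rather than Martin-L\"of) condition enters.
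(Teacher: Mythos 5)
Your proof is correct and follows essentially the same route as the paper's: forward, take $g(n,x)=g_n(x)$ from a Brouwer/Schnorr name and force divergence on the exceptional set; backward, observe that $g(n,\cdot)$ converges point-free effectively almost uniformly and invoke the conversion machinery (Lemma~\ref{lem:useful-conversions}(\ref{item:useful-limits}) / Theorem~\ref{thm:equiv-of-3-approaches}(3)). You are in fact more careful than the paper in two spots: the paper merely asserts that ``one can slightly modify $g(n,x)$ so that it does not converge for any $x\in\bigcap_n U_n$'' (your device of making $g(n,\cdot)$ undefined on the semidecidable set $U_n$ is a clean realization of this, and your ``obstacle'' paragraph correctly diagnoses why a total $g$ is awkward here), and the paper claims $\lim_n g(n,x)=f(x)$ for all $x\notin\bigcap_n U_n$ when one really needs $x\notin\limsup_m U_m$, which your tails $V_n=\bigcup_{m\ge n}U_m$ handle properly.
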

\begin{proof}
If $f$ is Brouwer/Schnorr effectively measurable and given by a sequence of basic functions $(g_n)$,
and a Schnorr test $(U_n)$, then set $g(n,x) = g_n(x)$ and we have
\[ \mu\left\{ x: |g(n,x)-f(x)|>2^{-n}\right\} \leq \mu(U_n) \leq 2^{-n}. \]
Hence it $f$ is recursively approximable.  Moreover, for all $x \notin \bigcap_n U_n$,
$\lim_n g(x, n) = f(x)$.  Finally, one can slightly modify $g(n,x)$ so that it does not
converge for any $x\in \bigcap_n U_n$.

Conversely, if $f$ is recursively approximable given as the limit of $g(n,x)$,
then the sequence $g(n,x)$ converges point-free effectively almost uniformly.  
By Lemma~\ref{lem:useful-conversions}(\ref{item:useful-limits}), $f$ is
Brouwer/Schnorr effectively measurable.
\end{proof}
Now, we give a concrete (and interesting) example of a recursively approximable function 
which is not just almost everywhere\ computable.
\begin{example}
Let $\lambda$ denote the fair-coin measure on $\{0,1\}^{\mathbb{N}}$. Let
$\bar{x}_\ell$ denote the frequency of $1$'s in the first $\ell$ bits of $x$, i.e.\ 
$\frac{1}{\ell}\sum_{k=0}^{\ell-1}x_{k}$.  Let  $f\colon\{0,1\}^{\mathbb{N}}\rightarrow\mathbb{R}$
be $f(x) = \sup_\ell \bar{x}_\ell$.
This function $f$ is recursively approximable as follows. Given $n$,
by standard probability estimates (e.g.\ martingale inequalities), there is some $m$ computable from $n$
such that 
\[ \lambda\left\{x: \sup_{\ell < m} \bar{x}_\ell = \sup_{\ell} \bar{x}_\ell \right\} > 1 - 2^{-n}.\]
Therefore, we can estimate $f(x)$ with $g(n, x) = \sup_{\ell < m(n)} \bar{x}_\ell$.
This estimate is correct (even exact) with probability at least $1-2^{-n}$.
Since $f$ is recursively approximate and $f(x) = \lim_n g(n,x)$, 
it is also Brouwer/Schnorr effectively measurable.

Notice, however, that $f$ cannot be almost everywhere computable. If it were,
then for almost all strings $x$ such that $f(x) < 2/3$, one could read finitely many bits
and be sure that $f(x)<2/3$.  However, it is impossible to know this almost surely 
from finitely bits of $x$.  
(Specifically, $\{x:f(x) < 2/3\}$ is a nowhere dense set of positive measure.)
\end{example}

\subsection{Obtaining Schnorr randomness through effective Solovay forcing\label{subsec:Forcing}}

In Subsection~\ref{subsec:Point-free} we saw that the Boolean algebra
of measurable sets modulo a.e.~equivalence seems to capture the intuitive
notion of randomness. In particular, by forcing with this poset (Solovay
random forcing) the resulting generics are exactly those reals which
are in every measure one set in the ground model. 

Now, we will consider the effective analogue of Solovay's forcing
construction. (Compare to the presentation in Jech~\cite[pp.~511--515]{Jech:2003}.)
Let $\mathcal{B}$ be the Boolean algebra of point-free effectively
measurable sets. A set $\mathcal{F}\subseteq\mathcal{B}\smallsetminus\{\varnothing\}$
is a \emph{filter} if it is upward-closed and closed under finite
meets (that is $A\cap B\in\mathcal{F}$ whenever $A\in\mathcal{F}$
and $B\in\mathcal{F}$. Moreover, $\mathcal{F}$ is an \emph{ultrafilter}
if for each $A\in\mathcal{B}$ either $A$ or its complement is in
$\mathcal{F}$. Say that an ultrafilter $\mathcal{G}$ is \emph{effectively generic}
if for every computable sequence $\mathcal{A}=(A_{n})$ of elements
in $\mathcal{G}$, if $\bigcap_{n}A_{n}$ is in $\mathcal{B}$, then
$\bigcap_{n}A_{n}$ is in $\mathcal{G}$. For a topological space, 
let $\mathcal{G}_\textnormal{cpt}$ 
(resp.\ $\mathcal{G}_\textnormal{cl}$) be the collection of all compact 
(resp.\ closed) sets whose equivalence class is in $\mathcal{G}$.

If we are working in a computable probability measure on a computable
metric space, then for every point-free effectively measurable set $A \in \mathcal{B}$ and
every Schnorr random $x$, by Theorem~\ref{thm:equiv-of-3-approaches}(3), 
there is a canonical value $\widetilde{\mathbf{1}_A}(x)$ which is either $0$ or $1$.
We write $x\in \widetilde{A}$ if $\widetilde{\mathbf{1}_A}(x)=1$ and $x\notin \widetilde{A}$
if $\widetilde{\mathbf{1}_A}(x)=0$.

\begin{prop}
Fix a computable probability measure $\mu$ on a computable metric space $X$
and let $\mathcal{B}$ be the Boolean
algebra of point-free effectively measurable sets.  The following are equivalent
for any collection $\mathcal{G} \subseteq \mathcal{B}$.

\begin{enumerate}
\item $\mathcal{G}$ is an effectively generic ultrafilter. 
\item $\mathcal{G}$ is an ultrafilter and $\bigcap\mathcal{G}_\textnormal{cl}=\bigcap\mathcal{G}_\textnormal{cpt} = \{x\}$ for some Schnorr random $x$.
\item $\mathcal{G}=\{A\in\mathcal{B} : x\in\widetilde{A}\}$ for some Schnorr random $x$. 
\end{enumerate}
\noindent The Schnorr randoms $x$ in (2) and (3) are the same.
\end{prop}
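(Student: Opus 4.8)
The plan is to make all three conditions revolve around (3), proving $(3)\Rightarrow(1)$, $(1)\Rightarrow(3)$, $(3)\Rightarrow(2)$, and $(2)\Rightarrow(3)$. The engine throughout is the canonical pointwise value $\widetilde{\mathbf{1}_A}(x)\in\{0,1\}$ supplied by Theorem~\ref{thm:equiv-of-3-approaches}(3). First I would record that, for a fixed Schnorr random $x$, the map $A\mapsto\widetilde{\mathbf{1}_A}(x)$ is a Boolean homomorphism $\mathcal{B}\to\{0,1\}$: applying the uniqueness clause of Theorem~\ref{thm:equiv-of-3-approaches}(3) to the product, sum, and complement of approximating sequences yields $\widetilde{\mathbf{1}_{A\cap B}}(x)=\widetilde{\mathbf{1}_A}(x)\,\widetilde{\mathbf{1}_B}(x)$, $\widetilde{\mathbf{1}_{A^c}}(x)=1-\widetilde{\mathbf{1}_A}(x)$, and monotonicity under a.e.\ inclusion. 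Two further facts are needed. (a) If $(A_n)$ is computable with $A_n\downarrow A$ and $A\in\mathcal{B}$, then $\mu(A_n)\downarrow\mu(A)$ is an effective limit (a decreasing computable sequence with computable limit), so $\mathbf{1}_{A_n}\to\mathbf{1}_A$ effectively in $L^1$, hence point-free effectively almost uniformly; by Lemma~\ref{lem:useful-conversions}(\ref{item:useful-limits}), $\widetilde{\mathbf{1}_{A_n}}(x)\to\widetilde{\mathbf{1}_A}(x)$. (b) For an effectively open (resp.\ closed) set $U$ of computable measure, $\widetilde{\mathbf{1}_U}(x)=1$ iff $x\in U$; I would prove this by writing $U$ as an effective increasing union of finite unions of basis balls (whose boundaries are Schnorr null) and passing to the almost uniform limit.

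With this in hand, $(3)\Rightarrow(1)$ and $(3)\Rightarrow(2)$ are formal. From (a) and the homomorphism property, $\mathcal{G}_x:=\{A:x\in\widetilde{A}\}$ is an ultrafilter, and effective genericity is exactly fact (a) applied to $B_n=\bigcap_{k\le n}A_k$. For $(3)\Rightarrow(2)$, fact (b) gives $[C]\in\mathcal{G}_x\Rightarrow x\in C$ for closed $C$, so $x\in\bigcap(\mathcal{G}_x)_{\textnormal{cl}}$; conversely, for $y\ne x$ a basis ball separates them and its closure is a closed member of $\mathcal{G}_x$ missing $y$, so $\bigcap(\mathcal{G}_x)_{\textnormal{cl}}=\{x\}$. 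For the compact intersection I would additionally use tightness: the complements of a computable sequence of effectively compact sets of measure $\ge1-2^{-n}$ form a Schnorr test, so some such $K_0\ni x$, and intersecting $K_0$ with the separating closed set produces a compact member of $\mathcal{G}_x$ missing $y$.

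The main construction is $(1)\Rightarrow(3)$. At scale $m$ I would cover $X$ by basis balls of radius below $2^{-m}$, refine to a computable countable partition $(D^{(m)}_j)_j\subseteq\mathcal{B}$ into pieces of diameter $<2^{-m+1}$, and argue that $\mathcal{G}$ contains exactly one piece: disjointness gives at most one, and if it contained none then the complements would be a computable sequence in $\mathcal{G}$ with intersection $\varnothing\in\mathcal{B}$, contradicting effective genericity. Finite meets of the chosen pieces give a nested sequence $A_m\in\mathcal{G}$ of positive measure and diameter $\to0$, so by completeness $\bigcap_m\overline{A_m}=\{x\}$ for a single point $x$. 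The clean separation step is geometric: if $C$ is closed with $[C]\in\mathcal{G}$ but $x\notin C$, then $\overline{A_m}\subseteq X\setminus C$ for large $m$, forcing $[A_m]\cap[C]=\varnothing\in\mathcal{G}$, which is impossible; hence $[C]\in\mathcal{G}\Rightarrow x\in C$ for every closed $C$. Applying this to the complements of a Schnorr test (one of which lies in $\mathcal{G}$, again by effective genericity) shows $x$ is Schnorr random, and applying it with inner regularity — extracting, via genericity, an effectively closed $[C_k]\in\mathcal{G}$ with $[C_k]\subseteq A$ — gives $\widetilde{\mathbf{1}_A}(x)=1$ for each $A\in\mathcal{G}$. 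Thus $\mathcal{G}\subseteq\mathcal{G}_x$, and equality follows since both are ultrafilters; this also identifies the point of (3) with the point of the reconstruction.

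The hard part will be $(2)\Rightarrow(3)$. Here $x$ is handed to us as $\bigcap\mathcal{G}_{\textnormal{cpt}}$ and I must show $\mathcal{G}=\mathcal{G}_x$ from the single-point condition alone. The usable inputs are $[C]\in\mathcal{G}\Rightarrow x\in C$ for closed $C$ (immediate from $x\in\bigcap\mathcal{G}_{\textnormal{cl}}$), and a finite-intersection argument giving a partial converse: since $\mathcal{G}_{\textnormal{cpt}}$ is downward directed with intersection $\{x\}$, for every open $O\ni x$ the directed compacts $K\cap O^c$ have empty intersection, so one is empty, yielding $K\in\mathcal{G}_{\textnormal{cpt}}$ with $K\subseteq O$. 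These two facts let me match $\mathcal{G}$ with $\mathcal{G}_x$ on every basis ball $B$ — using that a Schnorr random $x$ avoids the effectively null boundary $\partial B$, so $x\notin B$ places $x$ in $X\setminus\overline{B}$ at positive distance, where a small compact member of $\mathcal{G}$ separates — and hence on the Boolean algebra they generate. The obstacle is upgrading agreement on basic sets to all of $\mathcal{B}$: a general ultrafilter is not continuous for the measure metric, so I cannot merely pass to the limit of approximants $P_k\to A$. My plan is to show that the single-point condition forces the effective countable-intersection property of $\mathcal{G}$ itself — that $\mathcal{G}_{\textnormal{cpt}}$ is cofinal below each member of $\mathcal{G}$ — again through the finite-intersection property of the directed compact family, after which the argument of $(1)\Rightarrow(3)$ applies. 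Pinning down this extraction, and in particular handling elements of $\mathcal{B}$ whose boundary is only null rather than effectively decidable, is the step I expect to require the most care.
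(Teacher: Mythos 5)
Your implications $(3)\Rightarrow(1)$, $(1)\Rightarrow(3)$, and $(3)\Rightarrow(2)$ are sound and track the paper closely: the paper runs the cycle $(1)\Rightarrow(2)\Rightarrow(3)\Rightarrow(1)$, and your $(1)\Rightarrow(3)$ essentially fuses its first two legs (the same small-diameter covers by sets of computable measure, the same use of effective genericity against a Schnorr test, the same appeal to inner regularity to get $\mathcal{G}\subseteq\mathcal{G}_x$). But the proof is not complete: $(2)\Rightarrow(3)$ is only a plan with an acknowledged open step, and nothing you have actually finished lets you exit condition $(2)$, so the three statements are not yet shown equivalent. The obstacle you identify is real --- an arbitrary ultrafilter is not continuous for the $L^1$ metric, so you cannot pass from agreement on basic sets to agreement on all of $\mathcal{B}$ by approximating $A$ by basic $P_k\to A$ --- but the repair you propose (forcing $(2)$ to yield effective genericity and then rerunning $(1)\Rightarrow(3)$) is not what is needed. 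The paper instead invokes the two regularity facts for Schnorr randoms \cite[Prop.~3.22, p.~41]{Rute:2013pd}: if $x\in\widetilde{A}$ there is a closed effectively measurable $C\subseteq A$ $\mu$-a.e.\ with $x\in C$, and if $x\notin\widetilde{A}$ there is an open effectively measurable $U\supseteq A$ $\mu$-a.e.\ with $x\notin U$. Hypothesis $(2)$ already determines $\mathcal{G}$ on closed sets --- every member of $\mathcal{G}_\textnormal{cl}$ contains $x$, and, by the ultrafilter property together with the compact separation argument you yourself sketch, every effectively measurable closed set containing $x$ lies in $\mathcal{G}_\textnormal{cl}$ --- and regularity then transfers this to every $A\in\mathcal{B}$ with no limit-passing at all: $x\in\widetilde{A}$ yields $[C]\in\mathcal{G}$ with $C\subseteq A$ a.e., hence $A\in\mathcal{G}$ by upward closure, while $x\notin\widetilde{A}$ yields a closed $U^c\subseteq A^c$ a.e.\ containing $x$, hence $A^c\in\mathcal{G}$ and $A\notin\mathcal{G}$. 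The missing idea is regularity, not genericity.

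A secondary imprecision runs through several of your steps: you pass between ``$x\in C$'' and ``$\widetilde{\mathbf{1}_C}(x)=1$'' for closed $C$ as if interchangeable. They are not for arbitrary closed representatives: $C=\{x\}\cup D$ with $D$ closed and far from $x$ contains $x$ but has $[C]=[D]$, which need not lie in $\mathcal{G}_x$; conversely a closed null set containing $x$ always has $\widetilde{\mathbf{1}_C}(x)=0$. Your fact (b), the closed-set step of $(3)\Rightarrow(2)$, and the final step of $(1)\Rightarrow(3)$ (from $x\in C_k$ and $C_k\subseteq A$ a.e.\ to $\widetilde{\mathbf{1}_A}(x)=1$) all depend on $C$ being the canonical effectively closed witness of computable measure supplied by the regularity lemma, for which membership of a Schnorr random does determine the canonical value. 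That is precisely the content of \cite[Prop.~3.22, p.~41]{Rute:2013pd}, which the paper cites at this point; you should cite it as well rather than rederive it from ``$x$ avoids null boundaries,'' which only covers basis balls.
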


\begin{proof}
(1) $\rightarrow$ (2):  Since $\mathcal{G}$ is an effectively generic ultrafilter, $\mathcal{G}_\textnormal{cpt}$
is nonempty and contains subsets of arbitrarily small diameter. (Indeed,
one can effectively compute a countable cover of closed balls of computable
measure less than $\varepsilon$ \cite[Lemma~5.1.1]{Hoyrup.Rojas:2009}.
Then one can find compact subsets of these balls of measure arbitrarily
close to the measure of the ball. This gives a computable sequence
of compact sets $K_{n}$ of computable measure such that $\mu(\bigcup_{n}K_{n})=1$.
If $K_{n}\notin\mathcal{G}_\textnormal{cpt}$ for all $n$, then since $\mathcal{G}$
is an ultrafilter, the complement $U_{n}$ of $K_{n}$ is in $\mathcal{G}$
for all $n$. But $\bigcap_{n}U_{n}=\varnothing$ $\mu$-a.e.\ violating
the fact that $\mathcal{G}$ is effectively generic.) Since $\mathcal{G}$
is a filter, $\mathcal{G}_\textnormal{cpt}$ is closed under finite intersections,
and by compactness the intersection $G=\bigcap\mathcal{G}_\textnormal{cpt}$ is nonempty.
Since the sets of $\mathcal{G}_\textnormal{cpt}$ have arbitrarily small diameter,
the set $G$ is a singleton set $\{g\}$.  

We also have that $\mathcal{G}_\textnormal{cl}$ = $\{g\}$ 
since any effectively measurable, closed set $C \in \mathcal{B}$ which does not contain $g$
must be disjoint from some ball around $g$.  Therefore, $C$ is also 
disjoint from some effectively measurable, compact set 
$K \in \mathcal{G}_\textnormal{cpt}$ of arbitrarly small diameter containing $x$.
Hence, $\mathcal{G}_\textnormal{cl}$ is made up precisely of the effectively measurable, 
closed sets containing $g$.

To see that $g$ is Schnorr random, consider a Schnorr test $(U_{n})$
and let $(C_{n})$ be the complementary sequence of closed sets.
To show $g \notin \bigcap_n U_n$, it
it suffices to show $C_n \in \mathcal{G}_\textnormal{cl}$ for some $n$,
because then, $g \notin U_n$.
Assume for a contraction that $C_n \notin \mathcal{G}_\textnormal{cl}$ for all $n$.
Then $U_n \in \mathcal{G}$ for all $n$.  Since $(U_n)$ is a Schnorr test
and $\mathcal{G}$ is effectively generic,
$\varnothing=\bigcap_{n}U_{n}\in\mathcal{G}$
violating that $\mathcal{G}$ is a filter.
\medskip

(2) $\rightarrow$ (3): Assume $\mathcal{G}$ is an ultrafilter and 
$\bigcap\mathcal{G}_\textnormal{cl} = \{x\}$.
Since $\mathcal{G}$ is an ultrafilter, $\mathcal{G}_\textnormal{cl}$ is precisely the
collection of all effectively measurable, closed sets $C$ which contain $x$.
The rest follows from the following two regularity facts for an arbitrary effectively measurable set $A$
and a Schnorr random $x$ (which can be found in Rute \cite[Prop.~3.22, p.~41]{Rute:2013pd}).
\begin{itemize}
\item If $x \in \widetilde{A}$, then there exists a closed, effectively measurable set $C \subseteq A$ $\mu$-a.e.\ such that $x \in C$.
\item If $x \notin \widetilde{A}$, then there exists an open, effectively measurable set $U \supseteq A$
$\mu$-a.e.\ such that $x \notin U$.
\end{itemize}
\medskip

(3) $\rightarrow$ (1): Assume that $x$ is a Schnorr random and let $\mathcal{G}_{x}=\{A\in\mathcal{B} : x\in\widetilde{A}\}$.
To see that $\mathcal{G}_{x}$ is an effectively generic ultrafilter
it is enough to show the following. 

\begin{itemize}
\item $x\notin\widetilde{\varnothing}$.
\item For all $A,B\in\mathcal{B}$, if $x\in\widetilde{A}$ and $A\subseteq B$
$\mu$-a.e.\ then $x\in\widetilde{B}$.
\item For all $A\in\mathcal{B}$, if $x\in\widetilde{A}$ then $x\notin\widetilde{A^{c}}$.
\item For any computable sequence $(A_{n})$ from $\mathcal{B}$, if $x\in\widetilde{A_{n}}$
for all $n$ and $\bigcap_{n}A_{n}$ is effectively measurable, then
$x\in\widetilde{\bigcap_{n}A_{n}}$.
\end{itemize}
These results can all be found in Rute \cite[Prop.~3.28, p.~42]{Rute:2013pd}.
\end{proof}

This result shows that we can consistently extend Schnorr randomness to any
arbitrary computable probability space as in Definition~\ref{def:comp-meas-space},
even when there is not an underlying computable metric space.
\begin{defn}
For a computable probability space $(X,\mathcal{B},\mathcal{R},\mu)$, 
define a \emph{Schnorr random} to be an effectively generic ultrafilter $\mathcal{G}$
in the Boolean algebra of effectively measurable sets $\mathcal{B}$.
\end{defn}

Forcing is important in computability theory and proof theory. See
Shore \cite{Shore:} for a survey, Downey and Hirschfelt 
\cite{Downey.Hirschfeldt:2010} for examples of forcing in computably theory and randomness,
and Avigad \cite{Avigad:2004}
for examples of forcing in reverse and constructive mathematics. 
An alternative interpretation of ``effective Solovay forcing'' is due to Kautz \cite{Kautz:1991}\cite[\S\S7.2.5]{Downey.Hirschfeldt:2010}.
It is also known that forcing
with effectively closed sets of computable measure can be used to
construct Schnorr randoms with pathological properties; see,
for example, Yu \cite{Yu11}.

\bibliographystyle{alpha}
\bibliography{rand_analysis}

\end{document}